\documentclass{amsart}
\usepackage[utf8]{inputenc}
\usepackage{amstext}
\usepackage{amsthm}
\usepackage{amssymb}
\usepackage{stmaryrd}

\makeatletter
\numberwithin{equation}{section}
\numberwithin{figure}{section}




\usepackage{amssymb}

\usepackage{tikz-cd}

\usepackage[normalem]{ulem}
\usepackage{amsfonts}\usepackage{amscd}\usepackage{tikz-cd}
\usepackage{verbatim}
\usepackage{listings}
\lstset{
basicstyle=\small\ttfamily,
columns=flexible,
breaklines=true
}
\usepackage{epsfig}
\usepackage{url}
\usepackage{epic}
\usepackage{latexsym}
\usepackage{graphicx}
\usepackage{xcolor}

\usepackage{lcd}
\usepackage{mathrsfs}


%
\usepackage{hyperref}
\hypersetup{%
pdftitle={Preprint},
pdfauthor={},
pdfkeywords={},
bookmarksnumbered,
pdfstartview={FitH},
breaklinks=true,
colorlinks=true,
urlcolor=blue,
citecolor=blue,
linktocpage=true
}%
\usepackage[hypcap=true]{caption}

\usepackage{breakurl}

\usepackage{lscape}
\usepackage{tabularx}
\usepackage{marginnote}
\usepackage[all]{xy}
\usepackage{enumitem}
\usepackage{tipa}
\usepackage{upgreek}

\usepackage{tikz}
\usetikzlibrary{positioning,shapes,shadows,arrows,snakes,arrows.meta}

\pgfdeclarelayer{edgelayer}
\pgfdeclarelayer{nodelayer}
\pgfsetlayers{edgelayer,nodelayer,main}

\tikzstyle{none}=[inner sep=0pt]
\tikzstyle{black box}=[draw=black, fill=black!25]
\tikzstyle{white box}=[draw=black, fill=white]
\tikzstyle{black circle}=[circle,draw=black!50, fill=black!25]
\tikzstyle{red circle}=[circle,draw=red!50, fill=red!25]
\tikzstyle{blue circle}=[circle,draw=blue!50, fill=blue!25]
\tikzstyle{green circle}=[circle,draw=green!50, fill=green!25]
\tikzstyle{yellow circle}=[circle,draw=yellow!50, fill=yellow!25]

\tikzstyle{unfrozen}=[circle,inner sep=1pt,minimum size=1pt,draw=black!100,fill=red!50]
\tikzstyle{frozen}=[rectangle,inner sep=1pt,minimum size=12pt,draw=black!75,fill=cyan!50]
\tikzstyle{boundary}=[-,draw=cyan]
\tikzstyle{internal}=[-,draw=red]
\tikzstyle{pre}=[<-,shorten <=1pt,>={Stealth[round]},semithick]
\tikzstyle{post}=[->,shorten >=1pt,>={Stealth[round]},semithick]
\tikzstyle{point}= [circle,inner sep=0pt, outer sep=1.5pt,minimum size=1.5pt,draw=black!100,fill=black!100]
\tikzstyle{every label}= [black]

\DeclareMathOperator{\Spec}{Spec}

\DeclareMathOperator{\Id}{Id}

\DeclareMathOperator{\prin}{prin}

\DeclareMathOperator{\Lie}{Lie}


\def\!{\mskip-\thinmuskip}
\let\?=\overline
\let\:=\colon
\let\llb=\llbracket
\let\rrb=\rrbracket



\newcommand{\kk}{\Bbbk}

\newcommand{\mm}{{\bf m}}

\newcommand{\sd}{{\bf s}}

\newcommand{\red}[1]{{\color{red} #1}}



\newcommand{\F}{{\mathbb F}}

\newcommand{\Q}{{\mathbb Q}}
\newcommand{\R}{{\mathbb R}}
\newcommand{\Z}{{\mathbb Z}}
\newcommand{\N}{{\mathbb N}}






\newcommand{\pr}{\operatorname{pr}} 



\newcommand{\cC}{\mathcal{C}}
\newcommand{\cO}{\mathcal{O}}
\newcommand{\cY}{\mathcal{Y}}

\newcommand{\op}{{\mathrm{op}}}
\newcommand{\codeg}{\mathrm{codeg}}
\newcommand{\suppDim}{\mathrm{suppDim}}

\newcommand{\Hf}{\frac{1}{2}}

\newcommand{\ufv}{{\operatorname{uf}}}
\newcommand{\fv}{{\mathrm f}}

\newcommand{\uk}{\underline{k}}

\newcommand{\var}{\operatorname{var}}

\newcommand{\tQ}{\widetilde{Q}}

\newcommand{\alg}{\mathsf{A}}
\newcommand{\clAlg}{\boldsymbol{A}}
\newcommand{\upClAlg}{\boldsymbol{U}}
\newcommand{\bClAlg}{\overline{\clAlg}}
\newcommand{\bUpClAlg}{\overline{\upClAlg}}



\newcommand{\can}{\textbf{L}}
\newcommand{\bCan}{\textbf{B}}
\newcommand{\dCan}{\textbf{B}^*}

\newcommand{\Inj}{\textbf{I}}

\newcommand{\AVar}{\mathscr{A}}

\newcommand{\frz}{\mathfrak{f}}

\newcommand{\Br}{\mathrm{Br}}

\renewcommand{\Id}{\mathrm{Id}}

\newcommand{\ueta}{\underline{\eta}}

\newcommand{\bi}{\mathbf{i}}
\newcommand{\bj}{\mathbf{j}}

\newcommand{\ubi}{\underline{\bi}}

\newcommand{\ubj}{\underline{\bj}}

\newcommand{\uc}{\underline{c}}

\newcommand{\rsd}{\dot{\sd}}
\newcommand{\dsd}{\ddot{\sd}}
\newcommand{\seq}{\boldsymbol{\mu}}
\newcommand{\diag}{\delta}

\newcommand{\sym}{\mathsf{d}}

\newcommand{\hJ}{\hat{J}}

\newcommand{\LP}{\boldsymbol{LP}}
\newcommand{\bLP}{\overline{\LP}}
\newcommand{\hLP}{\widehat{\LP}}

\newcommand{\hV}{\widehat{V}}

\newcommand{\base}{\textbf{S}}
\newcommand{\std}{\textbf{M}}
\newcommand{\stdBasis}{\textbf{M}}

\newcommand{\cM}{\mathcal{M}}
\newcommand{\cT}{\mathcal{T}}
\newcommand{\cZ}{\mathcal{Z}}

\newcommand{\tC}{\widetilde{C}}
\newcommand{\tN}{\widetilde{N}}
\newcommand{\tw}{\widetilde{w}}

\newcommand{\frg}{\mathfrak{g}}
\newcommand{\frn}{\mathfrak{g}}
\newcommand{\frh}{\mathfrak{h}}

\newcommand{\hfrg}{\hat{\mathfrak{g}}}

\newcommand{\Norm}{\mathrm{Norm}}

\newcommand{\simp}{\mathsf{S}}
\newcommand{\bq}{\mathbf{q}}
\newcommand{\BZ}{\mathrm{BZ}}

\newcommand{\tropM}{M^\mathrm{trop}}
\newcommand{\Mtrop}{M^\mathrm{trop}}
\newcommand{\domM}{\overline{M}}
\newcommand{\domMtrop}{\domM^\mathrm{trop}}

\newcommand{\Mcirc}{{M^{\circ}}}

\newcommand{\tB}{\widetilde{B}}

\newcommand{\cF}{\mathcal{F}}
\newcommand{\cR}{\mathcal{R}}

\usetikzlibrary{calc}
\newcommand\doverline[1]{%
\tikz[baseline=(nodeAnchor.base)]{
    \node[inner sep=0] (nodeAnchor) {$#1$}; 
    \draw[line width=0.1ex,line cap=round] 
        ($(nodeAnchor.north west)+(0.0em,0.2ex)$) 
            --
        ($(nodeAnchor.north east)+(0.0em,0.2ex)$) 
        ($(nodeAnchor.north west)+(0.0em,0.5ex)$) 
            --
        ($(nodeAnchor.north east)+(0.0em,0.5ex)$) 
    ;
}}
\newcommand{\os}{\overline{s}}
\newcommand{\ou}{\overline{u}}
\newcommand{\ow}{\overline{w}}
\newcommand{\og}{\overline{g}}
\newcommand{\dos}{\doverline{s}}

\newcommand{\dog}{\doverline{g}}

\newcommand{\cRing}{\mathsf{R}}

\newcommand{\col}{\mathrm{col}}

\newcommand{\spec}{\Spec}

\newcommand{\lex}{\mathrm{lex}}
\newcommand{\rev}{\mathrm{rev}}

\ifdefined\supp
\else
\newcommand{\supp}{\operatorname{supp}}
\fi

\ifdefined\G
    \renewcommand{\G}{{\mathbb G}}
\else
   \newcommand{\G}{{\mathbb G}}
\fi

\ifdefined\C
    \renewcommand{\C}{{\mathbb C}}
\else
   \newcommand{\C}{{\mathbb C}}
\fi

\ifdefined\thm
\else
	\theoremstyle{plain}
 	\newtheorem{thm}{Theorem}[section]
\fi

\ifdefined\prop
\else
	\theoremstyle{plain}
 	\newtheorem{prop}[thm]{Proposition}
\fi\ifdefined\cor
\else
	\theoremstyle{plain}
 	\newtheorem{cor}[thm]{Corollary}
\fi

\ifdefined\lem
\else
	\theoremstyle{plain}
 	\newtheorem{lem}[thm]{Lemma}
\fi

\ifdefined\def
\else
	\theoremstyle{definition}
	
\fi

\ifdefined\defn
\else
	\theoremstyle{definition}
	\newtheorem{defn}[thm]{Definition}
\fi

\ifdefined\example
\else
	\theoremstyle{definition}
    
\fi

\ifdefined\eg
\else
		\theoremstyle{definition}
        \newtheorem{eg}[thm]{Example}
\fi

\ifdefined\rem
\else
		\theoremstyle{remark}
        \newtheorem{rem}[thm]{Remark}
\fi

\newtheorem{asm}[thm]{Assumption}

\thanks{}

\makeatother

\begin{document}
\title[Cluster canonical bases]{An introduction to representation-theoretic canonical bases of cluster
algebras}
\author{Fan Qin}
\email{qin.fan.math@gmail.com}
\begin{abstract}
This is an introduction to cluster algebras and their common triangular
bases. These bases are Kazhdan-Lusztig-type and serve as the canonical
bases of cluster algebras from the representation-theoretic point
of view. We review seeds associated with signed words, cluster operations
(freezing and base change), the extension technique from finite to
infinite rank, which are convenient tools to study these algebras
and their bases. We present recent results on the topic and provide
examples. An appendix collects detailed examples of some cluster algebras
from Lie theory, including those from Lie groups and representations
of quantum affine algebras.
\end{abstract}

\maketitle
\tableofcontents{}

\section{Introduction}

\label{sec:intro} 

\subsection{An informal introduction to cluster algebras}

Let us begin with an informal but intuitive introduction to cluster
algebras, originally invented by Fomin and Zelevinsky \cite{fomin2002cluster}.
Recall that an $m$-dimensional smooth real manifold takes the form
$\mathcal{M}=\cup_{\sd}V_{\sd}$, such that $V_{\sd}$ are open subsets,
$\sd$ denote their indices, and for each $V_{\sd}$, we have a homeomorphism:
\[
(x_{1},\ldots,x_{m}):V_{\sd}\simeq\R^{m}.
\]
The pair $(V_{\sd},(x_{1},\ldots,x_{m}))$ is called a chart. We might
call $(x_{1},\ldots,x_{m})$ its cluster of local coordinates. Furthermore,
we require that the transition maps (change of local coordinates)
are smooth. The collection of all charts is called the atlas of $\cM$.

In mathematics and physics (for example, in algebraic geometry and
in quantum mechanics), instead of working directly with a geometric
object, it is often useful to consider the algebra of functions on
it. For a smooth manifold $\cM$, we may consider the algebra $C^{\infty}(\cM)$
of smooth functions instead. This viewpoint also leads to a quantization:
a commutative algebra $C^{\infty}(\cM)$ endowed with a Poisson bracket
could be replaced by a non-commutative one.

Cluster algebras can be understood in a similar spirit. A rank-$m$
cluster algebra $\alg$ is an algebra endowed with clusters of distinguished
elements called cluster variables: 
\begin{enumerate}
\item For any chosen cluster $(x_{1},\ldots,x_{m})$, every element $f\in\alg$
could be expressed as a rational function in $x_{1},\ldots,x_{m}$. 
\item We impose additional requirements:
\begin{enumerate}
\item[(2a)]  There is an initial cluster, and all other clusters are obtained
recursively from it. 
\item[(2b)]  The transition maps between adjacent clusters (in the recursive
process) have a specific form (see \eqref{eq:exchange-relation}).
\end{enumerate}
\end{enumerate}
Here we work with rational functions instead of smooth functions because
we are in an algebraic-geometric setting. The recursive construction
(2a) was an essential part of the original definition by Fomin and
Zelevinsky \cite{FominZelevinsky03}. It is natural to ask whether
these algebras can be generalized by allowing more general forms of
transition maps. This leads to the notion of generalized cluster algebras;
see \cite{chekhov2014teichmuller}\cite{gekhtman2020generalized}.

In brief, a cluster algebra is an algebra endowed with an atlas (a
collection of clusters) satisfying certain additional requirements.
From this point of view, cluster algebras are very fundamental objects.
And unsurprisingly they appear in many, sometimes unexpected, areas
of mathematics. Their atlas might come from coordinate systems, wall-chamber
structures, $t$-structures of triangulated categories, and so on.

We say that an algebra has a cluster structure if it can be endowed
with an atlas making it into a cluster algebra. Not all algebras admit
cluster structures. And while all cluster algebras have rich structures,
some possess stronger properties. Many fascinating results have been
obtained for cluster algebras arising from quiver representations,
Lie theory, and (higher) Teichmüller theory; see \cite{fomin2010total}
\cite{Keller08Note} \cite{keller2011cluster} \cite{reiten2010cluster}
\cite{leclerc2010cluster} \cite{thurston2014positive} \cite{muller2016skein}~\cite{fomin2016introduction,fomin2017introduction,fomin2020introduction,fomin2021introduction}
for an incomplete list of surveys and accessible papers on these topics.

Cluster algebras are interesting objects for at least the following
reasons: 
\begin{itemize}
\item They possess rich and beautiful structures. 
\item They provide a common platform for connecting and comparing theories
form different areas. 
\item They allow the discovery of new phenomenon even in well-studied mathematical
objects. 
\end{itemize}
While cluster algebras were not invented for having applications in
other fields, they have proven useful in solving open problems. For
example, they have been used to verify the periodicity conjecture
of $T$-systems (certain discrete dynamical systems)\cite{kuniba2011t}\cite{inoue2010periodicities}\cite{keller2013periodicity},
to show the existence of infinite Lagrangian fillings \cite{casals2022infinitely},
and to prove conjectural properties of the $(q,t)$-characters of
simple modules of quantum affine algebras \cite{fujita2022isomorphisms}.

\subsection{Canonical bases of cluster algebras}

Cluster algebras were invented by Fomin and Zelevinsky \cite{fomin2002cluster}
as a framework to study the total positivity (see \cite{Lusztig96})
and the dual canonical bases of quantized enveloping algebras (see
\cite{Lusztig90,Lusztig91}\cite{Kas:crystal}). These algebras admit
natural quantizations \cite{BerensteinZelevinsky05}. They possess
distinguished elements called cluster monomials, namely monomials
in $x_{1},\ldots,x_{m}$ from the same cluster, which can be computed
recursively.

Fomin and Zelevinsky have the following ambitious expectations \cite{fomin2002cluster}:
\begin{itemize}
\item For many varieties $\AVar$ arising from Lie theory, the coordinate
ring $\kk[\AVar]$ is a cluster algebra (we might take $\kk=\C$ in
the classical case and $\kk=\Q(v)$ or $\Q[v^{\pm}]$ in the quantum
case).
\item The cluster algebra has a basis that contains all cluster monomials
and serves as an analog of the dual canonical basis $\dCan$ of the
quantized enveloping algebra $\kk[N]$.
\end{itemize}
The progress on the second expectation was slow for a long time. The
cluster structure on the quantum unipotent subgroup $\kk[N(w)]$ was
developed in \cite{GeissLeclercSchroeer11}\cite{GY13,goodearl2020integral}
(recall that $N=N(w_{0})$ when the associated Lie algebra $\frg$
is semisimple). For the quantized coordinate ring $\kk[N(w)]$ of
a unipotent subgroup $N(w)$ associated with any Kac-Moody algebra
$\frg$, the original dual canonical basis $\dCan$ is known to meet
this expectation \cite{qin2020dual} (see also \cite{qin2017triangular}\cite{Kang2018}\cite{mcnamara2021cluster}\footnote{\cite{qin2017triangular} proved the $ADE$ case and the case where
$w$ has an $\uc$-adapted word and $\frg$ is symmetric Kac-Moody
(Section \ref{subsec:Cluster-algebras-from-q-aff-alg}), and \cite{qin2020dual}
treated all cases. Based on a completely different approach, \cite{Kang2018}
proved the symmetric Kac--Moody case, and \cite{mcnamara2021cluster}
generalized their results to all cases.}). 

A key ingredient in the approach of \cite{qin2017triangular,qin2020dual}
is the definition of the common triangular basis $\can$ for a large
family of quantum cluster algebras. This basis is a Kazhdan-Lusztig-type
basis for cluster algebras (see Remark \ref{rem:BZ-triangular-basis}
for an earlier construction by Berenstein and Zelevinsky). \cite{qin2020dual}
proved that the dual canonical basis $\dCan$ of any $\kk[N(w)]$
is the common triangular basis of $\kk[N(w)]$ (Theorem \ref{thm:tri-basis-is-canonical}).
This result suggests that the common triangular basis $\can$ provides
a natural analogue of the dual canonical basis $\dCan$ in the framework
of cluster theory. Recently, \cite{qin2023analogs} showed that $\can$
indeed meets the second expectation for almost all known cluster algebras
from Lie theory (Theorem \ref{thm:Lie-tri-basis}). Moreover, when
$\frg$ is a symmetric Kac-Moody algebra, these bases admit (quasi-)categorifications,
i.e., they correspond to simple objects in an appropriate monoidal
category (i.e, tensor category). See Theorem \ref{thm:quasi-categorification-Lie},
where we use a weaker version of the categorification in \cite{HernandezLeclerc09}
(Section \ref{subsec:Quasi-categorifications}).

In representation theory, we know several families of canonical bases,
including the canonical bases $\mathbf{C}$ for Hecke algebras, the
canonical bases $\bCan$ for quantized enveloping algebras $U_{q}(\mathfrak{n})$,
and their dual bases $\dCan$ (dual canonical bases) for the quantized
coordinate rings $\kk[N]$. These bases are of Kazhdan-Lusztig type
(Section \ref{sec:A-review-Kazhdan-Lusztig}), and they often correspond
to \emph{minimal }objects in an appropriate module tensor category
(i.e., the simple modules or the indecomposable projective modules).
From the above discussion, we see that the common triangular basis
$\can$ of a cluster algebra shares similar properties. Therefore,
we might call $\can$ the \emph{representation-theoretic canonical
basis }of the cluster algebra.

\begin{rem}[Different canonical bases]

The word ``canonical'' in ``canonical basis'' does not seem to
have a precise definition. We might interpret it as ``the most natural'',
``the best'', or, from a more aesthetic perspective, ``the most
beautiful''. 

On the one hand, from the representation-theoretic point of view,
the dual canonical basis $\dCan$ of $\kk[N(w)]$ (equivalently, the
common triangular basis $\can$ of $\kk[N(w)]$) is its canonical
basis, at least when $\frg$ is of symmetric type.\footnote{For non-symmetric $\frg$, the simple modules of quiver Hecke algebras
seem to provide a better basis \cite{KhovanovLauda08,khovanov2011diagrammatic}\cite{Rouquier08}.
Though our knowledge of this basis is still limited and its relation
with cluster theory is unknown.}

On the other hand, there is yet another canonical basis for cluster
algebras: the theta basis $\vartheta$ proposed by \cite{gross2018canonical}\footnote{For cluster algebras arsing from Teichmüller theory, the canonical
basis introduced in \cite{fock2006moduli} (or the bracelets basis
in \cite{musiker2013bases}) is the theta basis $\vartheta$ in \cite{gross2018canonical};
see \cite{MandelQin2021}. Also, see \cite{davison2019strong} for
the quantum theta basis.}. This basis, constructed from geometric considerations, has exceptionally
strong properties, and we might call it the \emph{geometric canonical
basis}. 

Examples suggest that the following phenomena might hold in general:
\begin{itemize}
\item The two canonical bases differ when the cluster algebra has infinitely
many cluster variables.
\item Each element of $\can$ is a nonnegative linear combination of the
basis elements of $\vartheta$ (see \cite[Section 3]{qin2021cluster}),
when the initial seed of the cluster algebra is associated with a
skew-symmetric matrix (Section \ref{sec:Cluster-algebras}).
\end{itemize}
At present, there is insufficient evidence to determine whether one
basis could be regarded as more canonical than the other.

In addition, there is a mysterious phenomenon. On the one hand, the
elements of $\can$ correspond to \emph{minimal }objects in a module
category---either as established theorems or as guiding principles.
On the other hand, they admit nonnegative decompositions into the
theta basis elements, which are the \emph{atoms }for a cluster algebra
(see the atomic basis in \cite{lee2014greedy}\cite{lee2014greedyPNAS}\cite{cheung2015greedy}\cite{mandel2017theta}).
It is desirable to understand and interpret the decomposition.

\end{rem}

\subsection{Contents.}

In Section \ref{sec:Cluster-algebras}, we give basic definitions
and properties of cluster algebras.

In Section \ref{sec:Seeds-associated-with-words}, we introduce seeds
associated with signed words. They often appear in cluster algebras
from Lie theory. We also provide examples.

In Section \ref{sec:Cluster-operations}, we introduce operations
on cluster algebras (freezing and base change). This part provides
the key tools for studying bases of cluster algebras from Lie theory
in \cite{qin2023analogs}. It is technical, and the reader could skip
it without without loss of continuity in the subsequent sections.

In Section \ref{sec:A-review-Kazhdan-Lusztig}, we review basics of
Kazhdan-Lusztig-type bases and their categorifications.

In Section \ref{sec:Common-triangular-bases}, we introduce the common
triangular bases $\can$ for cluster algebras and present known results:
their existence (Theorems \ref{thm:tri-basis-existence-criterion}
\ref{thm:tri-basis-is-canonical} \ref{thm:Lie-tri-basis}), quasi-categorifications
(Theorem \ref{thm:quasi-categorification-Lie}), and in the special
cases, standard bases (Theorem \ref{thm:std-bases}) and categorifications
(Theorem \ref{thm:categorify-dBS}).

In Section \ref{sec:Extension-to-infinite}, we present the technique
of extending finite rank cluster algebras to infinite rank ones. In
Example \ref{eg:uniqe-A1-quantization}, we apply it to obtain the
quantization matrices of an infinite rank cluster algebra.

In Appendix \ref{sec:Examples-of-Cluster}, we review some cluster
algebras arising from Lie theory: those from Lie group $G$, double
Bruhat cells $G^{u,w}$, and representations of quantum affine algebras.
We also provide examples in details.

\subsection{Convention}\label{subsec:Convention}

All vectors will be column vectors unless otherwise specified.

Let $\sigma$ be a permutation of an index set $I$. For any $m=(m_{i})_{i\in I}\in\R^{I}$,
we define $\sigma m\in\R^{I}$ such that $(\sigma m)_{\sigma i}:=m_{i}$.
Similarly, let $B=(b_{ij})_{i,j\in I}$ be a matrix. We define $\sigma B=(b'_{ij})_{i,j\in I}$
such that $b'_{\sigma i,\sigma j}:=b_{ij}$. We use $\col_{k}B$ to
denote the $k$-th column vector of $B$. For $I_{1},I_{2}\subset I$,
we denote the submatrix $B_{I_{1},I_{2}}:=(b_{ij})_{i\in I_{1},j\in I_{2}}$.

We use $\Id_{n}$ to denote a rank $n$ identity matrix, and $\Id_{I}$
to denote the identity matrix on the index set $I$.

For $I'\subset I$, we use $\pr_{I'}$ to denote the natural projection
from $\R^{I}$ to $\R^{I'}$.

For any vector $m=(m_{i})_{i\in I}\in\R^{I}$, its support is defined
to be $\supp m:=\{i\in I|m_{i}\neq0\}$.

\section{Cluster algebras}\label{sec:Cluster-algebras}

\subsection{Definition of cluster algebras}

\subsubsection*{Seeds}

Let us define cluster algebras following the convention in \cite{qin2024infinite}.
We choose and fix a countable set $I$,\footnote{See \cite{gratz2015cluster} for general infinite rank cluster algebras
and \cite[Remark 2.2 Remark 3.18]{gratz2015cluster} for a discussion
on the countability.} called the set of vertices. We further choose a partition $I=I_{\ufv}\sqcup I_{\fv}$,
where $I_{\ufv}$ denotes the its unfrozen part and $I_{\fv}$ the
frozen part. Choose and fix strictly positive integers $\sym_{i}$,
$i\in I$.

Assume $(b_{ij})_{i,j\in I}$ is an $I\times I$ $\frac{\Z}{2}$-matrix
such that the following conditions hold:
\begin{enumerate}
\item $\sym_{i}b_{ij}=-\sym_{j}b_{ji}$ for any $i,j$;
\item $b_{ik}\in\Z$ if $k\in I_{\ufv}$;
\item It is locally finite, i.e., for any $i$, only finitely many $b_{ij}$
are nonzero and only finitely many $b_{ji}$ are nonzero.
\end{enumerate}
We will denote $\tB:=(b_{ik})_{i\in I,k\in I_{\ufv}}$. $B:=\tB_{I_{\ufv}\times I_{\ufv}}$
is called the principal part of $\tB$.

A (classical) seed $\sd$ is a collection $(I,I_{\ufv},(\sym_{i})_{i\in I},\tB,(x_{i})_{i\in I})$,
where $\tB$ is called its $B$-matrix, and $x_{i}$ are indeterminates
called its cluster variables. $(x_{i})_{i\in I}$ is called the cluster.
We will call $x_{j}$, $j\in I_{\fv}$ its frozen variables. The rank
of $\sd$ is defined to be the cardinality $|I|$.

\begin{rem}

For our purpose, it is sufficient and convenient to define the $B$-matrix
of $\sd$ to be $\tB$, like most references on cluster algebras.
However, one could replace $\tB$ by $(b_{ij})_{i,j\in I}$ in the
definition of a seed $\sd$. This latter definition will be important
if we want to consider the corresponding cluster Poisson algebra;
see \cite{gross2018canonical}\cite{kimura2022twist} for examples.

In examples, we will provide a full matrix $(b_{ij})_{i,j\in I}$
for a given seed $\sd$. The choice of entries $(b_{ij})_{i,j\in I_{\fv}}$
is not unique and will be unimportant for our purpose.

\end{rem}

By a quiver, we mean an oriented graph, whose oriented edges will
be called arrows. We always assume it it to be locally finite, i.e.,
each vertex is an end of finitely many arrows. It is often convenient
to draw a weighted (locally finite) quiver $\tQ$, called an ice quiver,
to represent the matrix $(b_{ij})_{i,j\in I}$:
\begin{itemize}
\item The set of vertices of $\tQ$ is $I$. The full subquiver on $I_{\ufv}$
is denote by $Q$. We often call $\tQ$ an ice quiver and $Q$ its
principal part.
\item For any $i,j\in I$ , if $b_{ij}>0$, we draw arrows with positive
weights from $i$ to $j$, such that their total weight is $b_{ij}$. 
\end{itemize}
Note that $b_{ji}=-\frac{\sym_{i}}{\sym_{j}}b_{ij}$. Only the total
weight of arrows matters for us: if two quivers have the same total
weight of arrows for any given pair $(i,j)$, we view them as the
same. Conversely, any such a weighted quiver determines a matrix $(b_{ij})$.

In our figures, solid (resp. dashed) arrows represent arrows of weight
$1$ (resp. $\frac{1}{2}$), and circular (resp. rectangular) nodes
represent unfrozen (resp. frozen) vertices.

\begin{eg}[{A seed from $\C[SL_3]$}]\label{eg:SL3-seed}

We consider the following seed arising from the cluster structure
on $\C[SL_{3}]$. $I=\{k\in\Z|k\in[-1,6]\}$, $I_{\fv}=\{-1,0,5,6\}$,
all $\sym_{i}=1$, and $(b_{ij})_{-1\leq i,j\leq6}=\left(\begin{array}{cccccccc}
0 & -\frac{1}{2} & 1 & 0 & 0 & 0 & 0 & 0\\
\frac{1}{2} & 0 & -1 & 0 & 1 & 0 & 0 & 0\\
-1 & 1 & 0 & -1 & 0 & 0 & 0 & 0\\
0 & 0 & 1 & 0 & -1 & 0 & 1 & 0\\
0 & -1 & 0 & 1 & 0 & -1 & 0 & 0\\
0 & 0 & 0 & 0 & 1 & 0 & -1 & \frac{1}{2}\\
0 & 0 & 0 & -1 & 0 & 1 & 0 & -1\\
0 & 0 & 0 & 0 & 0 & -\frac{1}{2} & 1 & 0
\end{array}\right)$. The associated ice quiver is depicted in Figure \ref{fig:SL3}.

\begin{figure}
\caption{The ice quiver for a seed of $\C[SL_{3}]$}
\label{fig:SL3}

\begin{tikzpicture}
 [node distance=48pt,on grid,>={Stealth[length=4pt,round]},bend angle=45, inner sep=0pt]
\node[frozen] (q-1) at (-4.5,-0.5) {-1};
\node[frozen] (q0) at (-3.5,0.5) {0};
\node[unfrozen] (q1) at (-2.5,-0.5) {1};
\node[unfrozen] (q2) at (-1.5,-0.5) {2};
\node[unfrozen] (q3) at (-0.5,0.5) {3};
\node[frozen] (q4) at (0.5,0.5) {4};
\node[unfrozen] (q5) at (1.5,-0.5) {5};
\node[frozen] (q6) at (2.5,-0.5) {6};
\draw[->,teal]  (q-1) edge (q1);
\draw[->,teal]  (q2) edge (q1);
\draw[->,teal]  (q2) edge (q5);
\draw[->,teal]  (q6) edge (q5);
\draw[->,teal]  (q5) edge (q4);
\draw[->,teal]  (q4) edge (q3);
\draw[->,teal]   (q0) edge (q3);
\draw[->,teal]  (q3) edge (q2);
\draw[->,teal]   (q1) edge (q0);
\draw[->,dotted,teal]  (q0) edge (q-1);
\draw[->,dotted,teal]  (q4) edge (q6);
\end{tikzpicture}

\end{figure}

\end{eg}

\begin{eg}[An infinite seed of type $A_1$]\label{eg:GHL-A1-inf}

Now consider an infinite seed appearing in \cite{geiss2024representations},
such that $I=I_{\ufv}=\Z$, all $\sym_{i}=1$, and $(b_{ij})_{i,j\in\Z}$
is given by 
\begin{align}
(b_{ij})_{i,j\in\Z} & =\left(\begin{array}{ccccccccc}
\cdots & \cdots\\
\cdots & 0 & 1\\
 & -1 & 0 & 1\\
 &  & -1 & 0 & 1\\
 &  &  & -1 & \red{0} & -1\\
 &  &  &  & 1 & 0 & 1\\
 &  &  &  &  & -1 & 0 & 1\\
 &  &  &  &  &  & -1 & 0 & \cdots\\
 &  &  &  &  &  &  & \cdots & \cdots
\end{array}\right)\label{eq:GHL-A1-inf-B}
\end{align}
is determined by the quiver in Figure \ref{fig:A1-GHL} (the central
entry in \eqref{eq:GHL-A1-inf-B} is the $b_{00}=0$). Note that almost
all arrows point to the left except the one between the nodes $0$
and $1$.

\begin{figure}
\caption{The quiver for an infinite seed of type $A_{1}$ in \cite{geiss2024representations}}
\label{fig:A1-GHL}

\begin{tikzpicture}
 [scale=0.8,node distance=48pt,on grid,>={Stealth[length=4pt,round]},bend angle=45, inner sep=0pt]

\node (q4) at (-8,0) {$\cdots$};
\node[unfrozen] (q3) at (-6,0) {3};
\node[unfrozen] (q2) at (-4,0) {2};
\node[unfrozen] (q1) at (-2,0) {1};
\node[unfrozen,fill=pink!50] (q0) at (0,0) {0};
\node[unfrozen,fill=pink!50] (q-1) at (2,0) {-1};
\node[unfrozen,fill=pink!50] (q-2) at (4,0) {-2};
\node[unfrozen,fill=pink!50](q-3) at (6,0) {-3};
\node(q-4) at (8,0) {$\cdots$};

\draw[->,teal]   (q-4) edge (q-3);
\draw[->,teal]   (q-3) edge (q-2);
\draw[->,teal]   (q-2) edge (q-1);
\draw[->,teal]   (q-1) edge (q0);
\draw[->,teal]   (q1) edge (q0);
\draw[->,teal]   (q1) edge (q0);
\draw[->,teal]   (q1) edge (q2);
\draw[->,teal]   (q2) edge (q3);
\draw[->,teal]   (q3) edge (q4);
\end{tikzpicture}
\end{figure}

\end{eg}

If $\sigma$ is a permutation on $I$ such that $\sigma I_{\ufv}=I_{\ufv}$,
we define the seed $\sd'=\sigma\sd$ such that $I'=I$, $I'_{\ufv}=I$,
$\sym'_{\sigma i}=\sym_{i}$, and $b'_{\sigma i,\sigma j}=b_{i,j}$.
In the quantum case, we endow $\sigma\sd$ with the compatible Poisson
structure $\sigma\lambda$ whose associated matrix is $\sigma\Lambda$.

\subsubsection*{Torus algebras}

Following \cite{gross2018canonical}, define $\Mcirc:=\oplus_{i\in I}\Z f_{i}\simeq\Z^{I}$
and $N:=\oplus_{i\in I}\Z e_{i}\simeq\Z^{I}$, where $f_{i}$ and
$e_{i}$ are the standard basis vectors. Define the linear map $p^{*}:N\rightarrow\Mcirc$
such that 
\begin{align*}
p^{*}(e_{j}) & =\sum b_{ij}f_{i}.
\end{align*}
Therefore, we have $p^{*}n=\tB n$ for $n\in N$ as coordinate (column)
vectors. We will denote $N_{\ufv}:=\oplus_{k\in I_{\ufv}}\Z e_{k}$,
$N^{\oplus}:=\oplus_{k\in I_{\ufv}}\N e_{k}$, and $M^{\oplus}:=p^{*}N^{\oplus}$.
We could view $\Mcirc\otimes\R$ and $N\otimes\R$ as dual spaces
by introduce the pairing $\langle f_{i},e_{j}\rangle=\delta_{ij}\frac{\sym_{i}}{\sym}$
for any chosen constant $\sym\neq0$. When the set $\{\sym_{i},i\in I\}$
is finite, one could choose $\sym$ to be the least common multiplier
of $\{\sym_{i},i\in I\}$ as in \cite[Section 2.1]{qin2019bases}
(our $\sym_{i}$ were denoted by $d_{i}^{\vee}$ and $\frac{\sym}{\sym_{i}}$
were denoted by $d_{i}$ in loc. cit., so that the conventions were
compatible with \cite{gross2018canonical}).

Unless otherwise specified, we will make the following assumption.

\begin{asm}[Injectivity assumption]

The map $p^{*}$ is injective on $N_{\ufv}$.

\end{asm}

Let $\kk$ denote a commutative ring, which we choose to be $\Z$
for the classical case and $\Z[v^{\pm}]$ for the quantum case unless
otherwise specified. Define the torus algebra $\LP$ to be the Laurent
polynomial ring $\kk[x_{i}^{\pm}]_{i\in I}$, whose commutative multiplication
is denoted by $\cdot$. We naturally identify $\LP$ with the group
ring $\kk[\Mcirc]:=\oplus_{m\in\Mcirc}\kk x^{m}$ such that $\prod_{i\in I}x_{i}^{m_{i}}=x^{m}$
for any $m=(m_{i})_{i\in I}=\sum m_{i}f_{i}\in\Mcirc$. Similarly,
we identify $\kk[y_{i}^{\pm}]_{i\in I}$ with $\kk[N]:=\oplus\kk y^{n}$
such that $\prod_{i}y_{i}^{n_{i}}=y^{n}$ for $n=(n_{i})_{i\in I}=\sum n_{i}e_{i}\in N$.
The linear map $p^{*}$ induces a homomorphism $p^{*}:\kk[N]\rightarrow\kk[M]$
such that 
\begin{align*}
p^{*}(y^{n}) & :=x^{p^{*}(n)}=x^{\tB n}.
\end{align*}
In particular, $p^{*}(y_{k})=x^{\col_{k}\tB}$, $\forall k\in I_{\ufv}$.
When there is no confusion, we might denote the Laurent monomial $x^{p^{*}(n)}$
by $y^{n}$ for simplicity.

\begin{defn}

The cluster monomials of $\sd$ are $x^{m}$, $m\in\N^{I}$. Its localized
cluster monomials are $x^{m}$, $m\in\N^{I_{\ufv}}\oplus\Z^{I_{\fv}}$,
and its frozen factors are $x^{p},p\in\Z^{I_{\fv}}$.

\end{defn}

Let $\widehat{\kk[N^{\oplus}]}$ denote the completion of $\kk[N^{\oplus}]$
with respect to the maximal ideal generated by $y_{k}$, $k\in I_{\ufv}$,
i.e., it is the ring of formal series $\kk\llbracket y_{k}\rrbracket_{k\in I_{\ufv}}$.
Since $p^{*}$ is injective, all $p^{*}(y_{k})$ for $k\in I_{\ufv}$
generate a maximal ideal for $\kk[M^{\oplus}]$. Let $\widehat{\kk[M^{\oplus}]}$
denote the correspond formal completion. We have $p^{*}:\widehat{\kk[N^{\oplus}]}\simeq\widehat{\kk[M^{\oplus}]}$.
Finally, we define the following formal completion:
\begin{align*}
\hLP & :=\LP\otimes_{\kk[M^{\oplus}]}\widehat{\kk[M^{\oplus}]}.
\end{align*}

We will also denote $\bLP:=\kk[x_{j}]_{j\in I_{\fv}}[x_{k}]_{k\in I_{\ufv}}$,
called the partially compactified torus algebra.

\subsubsection*{Quantization}

A compatible Poisson structure (or a compatible Poisson form) for
$\sd$ is a $\Z$-valued skew-symmetric bilinear form $\lambda$ on
$\Mcirc$, such that there exist $\diag_{k}>0$ for $k\in I_{\ufv}$
and 
\begin{align*}
\lambda(f_{i},p^{*}e_{k}) & =-\delta_{ik}\diag_{k},\ \forall i\in I,k\in I_{\ufv}.
\end{align*}
If such a $\lambda$ exists, we define the skew-symmetric matrix $\Lambda=(\Lambda_{ij})_{i,j\in I}$,
called the quantization matrix, such that $\Lambda_{ij}=\lambda(f_{i},f_{j})$.
The pair $(\Lambda,\tB)$ will be called a compatible pair as in \cite{BerensteinZelevinsky05}.
Note that $p^{*}$ must be injective if $\lambda$ exists. By \cite{gekhtman2003cluster,GekhtmanShapiroVainshtein05},
when $|I|<\infty$ and $p^{*}$ is injective, a compatible $\lambda$
must exist.

A quantum seed is a seed $\sd$ endowed with a compatible Poisson
structure $\lambda$. When we work with a quantum seed $\sd$ at the
quantum level $\kk=\Z[v^{\pm}]$, we will endow the torus algebra
$\LP$ with the $v$-twisted product $*$ such that
\begin{align*}
x^{m}*x^{n} & :=v^{\lambda(m,n)}x^{m+n},\ \forall m,n\in\Mcirc.
\end{align*}
And we will use $*$ as the multiplication for the algebra $\LP$
unless otherwise specified. 

We let $\cF$ denote the skew field of fractions of $\LP$; see \cite{BerensteinZelevinsky05}.
We also introduce the bar involution $\overline{(\ )}$ on $\LP$,
which is the $\Z$-linear map such that $\overline{v^{\alpha}x^{m}}=v^{-\alpha}x^{m}$.
It is an anti-involution.

\begin{eg}\label{eg:GHL-A1-inf-quantize}

It is a good exercise to find a compatible Poisson form $\lambda$
for the infinite seed in Example \ref{eg:GHL-A1-inf}. For convenience
of the reader, let us provide an explicit solution: the quantization
matrix associated with $\lambda$ is $\Lambda=(\Lambda_{ij})_{i,j\in\Z}$:
\begin{align}
\Lambda= & \left(\begin{array}{ccccccccccc}
 &  &  &  &  & \cdots\\
 & 0 & 1 & 0 & 1 & 0 & -1 & 0 & -1 & 0\\
 & -1 & 0 & 1 & 0 & 1 & 0 & -1 & 0 & -1\\
 & 0 & -1 & 0 & 1 & 0 & -1 & 0 & -1 & 0\\
 & -1 & 0 & -1 & 0 & 1 & 0 & -1 & 0 & -1\\
\cdots & 0 & -1 & 0 & -1 & \red{0} & -1 & 0 & -1 & 0 & \cdots\\
 & 1 & 0 & 1 & 0 & 1 & 0 & 1 & 0 & 1\\
 & 0 & 1 & 0 & 1 & 0 & -1 & 0 & 1 & 0\\
 & 1 & 0 & 1 & 0 & 1 & 0 & -1 & 0 & 1\\
 & 0 & 1 & 0 & 1 & 0 & -1 & 0 & -1 & 0\\
 &  &  &  &  & \cdots
\end{array}\right).\label{eq:GHL-A1-inf-Lambda}
\end{align}
The central entry in \eqref{eq:GHL-A1-inf-Lambda} is $\Lambda_{00}=0$.
The nonzero entries of $\Lambda$ are given by
\begin{align*}
\Lambda_{i,i+2d+1} & =\begin{cases}
1 & i<i+2d+1\leq0\\
-1 & \text{else}
\end{cases} & \forall i<0,d\in\Z\\
\Lambda_{0,2d+1} & =-1 & \forall d\in\Z\\
\Lambda_{i,i-2d-1} & =\begin{cases}
-1 & 0<i-2d-1<i\\
1 & \text{else}
\end{cases} & \forall i>0,d\in\Z
\end{align*}
This matrix $\Lambda$ can be computed inductively as in Example \ref{eg:uniqe-A1-quantization}.

Note that the matrix $\tB=B$ in \eqref{eq:GHL-A1-inf-B} is given
by
\begin{align*}
\col_{i}\tB & =e_{i-1}-e_{i+1} & \forall i\neq0,1\\
\col_{0}\tB & =e_{-1}+e_{1}\\
\col_{1}\tB & =-e_{0}-e_{2}
\end{align*}
It is straightforward to verify that $\Lambda\tB$ is an $\Z\times\Z$
diagonal matrix whose diagonal entries are $-2$, i.e., all $\diag_{k}=2$.
Therefore, $\lambda$ is a compatible Poisson pair. We will see in
Example \ref{eg:uniqe-A1-quantization} that how to determine all
solutions. 

We refer the reader to \cite{paganelli2025quantum} for another approach
to $\lambda$ with representation theoretic interpretations.

\end{eg}

By changing the sign in $\sd$, we obtain its opposite seed $\sd^{\op}:=(I,I_{\ufv},(\sym_{i})_{i\in I},-\tB,(x_{i})_{i\in I})$.
When $\sd$ is endowed with a compatible Poisson form $\lambda$,
we endow $\sd^{\op}$ with $-\lambda$. There is a canonical $\kk$-linear
anti-isomorphism (\cite[(3.1)]{qin2020analog})
\begin{align}
\iota & :\LP(\sd)\simeq\LP(\sd^{\op}),\ \iota(x^{m}):=x^{m}.\label{eq:iota-opposite}
\end{align}

\begin{rem}\label{rem:opposite-seed}

At the classical level, seeds for the same cluster algebras in different
literature sometimes differ by a sign, because $\iota$ is an isomorphism
and the sign difference is negligible. However, $\iota$ is not an
isomorphism at the quantum level. We will pay attention to distinguish
$\sd$ and $\sd^{\op}$, though it usually does not bring new information
in practice.

\end{rem}

\subsubsection*{Mutations}

Denote $[\ ]_{+}:=\max(\ ,0)_{+}$. Let $\sd$ be a given seed. For
any unfrozen vertex $k\in I_{\ufv}$, we have an operation $\mu_{k}$
called mutation, which produces a new seed $\sd'=(I,I_{\ufv},(d_{i})_{i\in I},\tB',(x'_{i})_{i\in I})$
such that $x'_{i}$ are indeterminates, and $\tB'=(b_{ij}')_{i\in I,j\in I_{\ufv}}$.
Here, we define a new matrix $(b_{ij}')_{i,j\in I}:=\mu_{k}(b_{ij})_{i,j\in I}$
by
\begin{align*}
b'_{ij} & =\begin{cases}
-b_{ij} & \text{if \ensuremath{i=k} or \ensuremath{j=k}}\\
b_{ij}+[b_{ik}]_{+}[b_{kj}]_{+}-[-b_{ik}]_{+}[-b_{kj}]_{+} & \text{else}
\end{cases}.
\end{align*}
We further define maps associated with mutations: $\psi^{M}:(\Mcirc)'\simeq\Mcirc$
and $\psi^{N}:N'\simeq N$ such that
\begin{align*}
\psi^{M}(f'_{i}) & =\begin{cases}
-f_{k}+\sum_{j\in I}[-b_{jk}]_{+}f_{j} & \text{if \ensuremath{i=k}}\\
f_{i} & \text{else}
\end{cases}
\end{align*}
\begin{align*}
\psi^{N}(e'_{i}) & =\begin{cases}
-e_{k} & \text{if \ensuremath{i=k}}\\
e_{i}+[b_{ki}]_{+}e_{k} & \text{else}
\end{cases}
\end{align*}
These constructions are associated with positive sign mutations; see
\cite[Section 2.7]{kimura2022twist} for a general definition.

When $\sd$ is a quantum seed, its Poisson structure $\lambda$ induces
a compatible Poisson structure on $\sd'$ via $\psi^{M}$, still denoted
$\lambda$:
\begin{align}
\lambda(m,m') & :=\lambda(\psi^{M}m,\psi^{M}m'),\ \forall m,m'\in(\Mcirc)'.\label{eq:mutation-Lambda}
\end{align}

We further introduce a $\kk$-algebra isomorphism $\mu_{k}^{*}:\cF'\simeq\cF$
between the associated skew fields of fractions, called the mutation
map, such that
\begin{align}
\mu_{k}^{*}(x'_{i}) & =\begin{cases}
x^{-f_{k}+\sum_{j\in I}[-b_{jk}]_{+}f_{j}}+x^{-f_{k}+\sum_{i\in I}[b_{ik}]_{+}f_{i}} & \text{if \ensuremath{i=k}}\\
x_{i} & \text{else}
\end{cases}.\label{eq:exchange-relation}
\end{align}
We will identify $\cF'$ and $\cF$ via $\mu_{k}^{*}$ and omit the
symbol $\mu_{k}^{*}$. Note that the mutation is an involution on
seeds and on skew fields; see \cite{fomin2002cluster,BerensteinZelevinsky05}.

In general, for any finite sequence of unfrozen vertices $\uk=(k_{1},\ldots,k_{r})$,
we have the mutation sequence $\seq:=\seq_{\uk}:=\mu_{k_{r}}\ldots\mu_{k_{1}}$
(read from right to left). For $\sd':=\seq\sd$, the mutation map
$\seq^{*}:\cF'\simeq\cF$ is defined as the composition $\mu_{k_{1}}^{*}\cdots\mu_{k_{r}}^{*}$.

\subsubsection*{Cluster algebras}

Let $\sd$ be any given (classical or quantum) initial seed. Let $\Delta^{+}:=\Delta_{\sd}^{+}$
denote the set of seed $\seq\sd$, where $\seq$ is any finite mutation
sequence. Identify $\cF$ and $\cF(\seq\sd)$ via $\seq^{*}$ as before.
Note that $x_{j}(\seq\sd)=x_{j}(\sd)$ for $j\in I_{\fv}$, which
we can simply denote by $x_{j}$.

We define the following versions of cluster algebras associated to
$\sd$:
\begin{itemize}
\item The partially compactified ordinary cluster algebra is 
\begin{align*}
\bClAlg(\sd) & :=\kk[x_{i}(\sd')]_{i\in I,\sd'\in\Delta^{+}},
\end{align*}
where $x_{i}(\sd')$ denote the cluster variables of the seed $\sd'$.
\item The partially compactified upper cluster algebra is 
\begin{align*}
\bUpClAlg(\sd) & :=\cap_{\sd'\in\Delta^{+}}\bLP(\sd').
\end{align*}
\item The localized ordinary cluster algebra is 
\begin{align*}
\clAlg(\sd) & :=\bClAlg(\sd)[x_{j}^{-1}]_{j\in I_{\fv}}=\kk[x_{j}^{\pm}]_{j\in I_{\fv}}[x_{k}(\sd')]_{k\in I_{\ufv},\sd'\in\Delta^{+}}.
\end{align*}
\item The localized upper cluster algebra is 
\begin{align*}
\upClAlg(\sd) & :=\bUpClAlg(\sd)[x_{j}^{-1}]_{j\in I_{\fv}}=\cap_{\sd'\in\Delta^{+}}\LP(\sd').
\end{align*}
\end{itemize}
The rank of a cluster algebra is defined to be that of $\sd$. 

We will drop the symbol $\sd$ when we do not want to emphasize our
choice of the initial seed $\sd$. By the Laurent phenomenon \cite{fomin2002cluster}\cite{BerensteinZelevinsky05},
we have $\bClAlg\subset\bUpClAlg$ and $\clAlg\subset\upClAlg$.

We define the frozen torus algebra to be $\cRing=\kk[x_{j}^{\pm}]_{j\in I_{\fv}}$.
Then $\clAlg$ and $\upClAlg$ are $\cRing$-algebras.

\subsection{Tropical points}

For the case $\sd'=\mu_{k}\sd$, we introduce the tropical mutation
$\phi_{\sd',\sd}:\Mcirc\rightarrow(\Mcirc)'$, which is a piecewise
linear map such that $m=(m_{i})_{i\in I}$ is sent to $m'=(m'_{i})_{i\in I}$
given by
\begin{align*}
m'_{i}:= & \begin{cases}
-m_{k} & \text{if \ensuremath{i=k}}\\
m_{i}+[b_{ik}]_{+}[m_{k}]_{+}-[-b_{ik}]_{+}[-m_{k}]_{+} & \text{else}
\end{cases}.
\end{align*}
It can viewed as the change of coordinates associated to the change
of basis $\psi^{M}$; see \cite[Lemma A.1.2]{qin2019bases}.

Now, consider any mutation sequence $\seq=\seq_{\uk}$, where $\uk=(k_{1},\ldots,k_{r})$.
We will denote $\sd_{0}=\sd$ and $\sd_{t}:=\mu_{t}\cdots\mu_{2}\mu_{1}\sd$
for $t\in[1,r]$. For $\sd'=\seq\sd$, define the tropical mutation
$\phi_{\sd',\sd}:\Mcirc\rightarrow(\Mcirc)'$ as composition $\phi_{\sd_{r},\sd_{r-1}}\cdots\phi_{\sd_{1},\sd_{0}}$.
Note that $\phi_{\sd',\sd}$ can be realized as the tropicalization
of $\seq^{*}$; see \cite{gross2013birational}. In particular, the
tropical mutation $\phi_{\sd',\sd}$ only depends on the mutation
map $\seq^{*}$, not on the specific sequence $\uk$.

As in \cite{qin2019bases}, we define the set of tropical point as
\begin{align*}
\Mtrop & :=(\sqcup_{\sd'\in\Delta^{+}}\Mcirc(\sd'))/\sim,
\end{align*}
where $\sim$ is the equivalence relations that $m\in\Mcirc(\sd)$
and $m'\in\Mcirc(\sd')$ are equivalent if and only if $m'=\phi_{\sd',\sd}(m)$.
The equivalence class of $m'\in\Mcirc(\sd')$ is called a tropical
point, denoted $[m']$. Conversely, $m'$ is called the representative
of $[m']\in\Mtrop$ in $(\Mcirc)'$. We refer the reader to \cite{gross2018canonical}
for the geometric definition of tropical points.

\subsection{Degrees and pointedness}

Let $\sd$ be any given seed. For the construction below, we need
$\sd$ to satisfy the Injectivity Assumption.

\begin{defn}

An element $z\in\hLP$ is said to be $m$-pointed for some $m\in\Mcirc$
if it takes the form 
\begin{align*}
z & =x^{m}\cdot(1+\sum_{0\neq n\in N^{\oplus}}c_{n}p^{*}(y^{n})).
\end{align*}
If $z'=v^{\alpha}z$ for some $\alpha\in\Z$, we say $z'$ has degree
$m$ and its (degree) normalization is $z$ with respect to $\sd$,
denoted $\deg^{\sd}(z')=m$ and $[z']^{\sd}:=z$. We call $F_{z}:=1+\sum c_{n}y^{n}\in\kk\llb N^{\oplus}\rrb$
the $F$-function of $z$. $F_{z}$ is called the $F$-polynomial
if $F_{z}\in\kk[N^{\oplus}]$.

Similarly, a formal sum $z$ of Laurent monomials is called $h$-copointed
for some $h\in\Mcirc$ if it takes the form
\begin{align*}
z & =x^{h}\cdot(1+\sum_{0\neq n\in N^{\oplus}}c_{n}p^{*}(y^{-n})).
\end{align*}
If $z'=v^{\alpha}z$ for some $\alpha\in\Z$, we say $z'$ has codegree
$m$, denoted $\codeg^{\sd}(z')=m$.

If $z\in\LP$ has degree $m$ and codegree $h$, we say it has bidegree
$(m,h)$. We further define its support dimension to be $\suppDim z\in N^{\oplus}$
such that $h=m+p^{*}(\suppDim z)$. If it is further $m$-pointed
and $h$-copointed, we say it is bipointed at bidegree $(m,h)$.

\end{defn}

The notions of degree, codegree, and normalization depends on the
seed $\sd$. When the context is clear, we will drop the symbol $\sd$. 

We introduce the following partial order on $\Mcirc$, such that $m$
becomes the highest (or leading) degree of an $m$-pointed element,
and $h$ becomes the lowest (or trailing) degree of an $h$-copointed
element.

\begin{defn}[{Dominance order \cite{qin2017triangular} \cite[Proof of Proposition 4.3]{cerulli2015caldero}}]

For any $m,m'\in\Mcirc(\sd)$, we say $m$ dominates $m'$, denoted
$m'\prec_{\sd}m$, if $m'=m+p^{*}(n)$ for some $0\neq n\in N^{\oplus}$.

\end{defn}

We will denote $\LP_{\preceq m}:=\oplus_{m'\preceq m}\kk x^{m'}$
and $\hLP_{\preceq m}:=\LP_{\preceq m}\otimes_{\kk[M^{\oplus}]}\widehat{\kk[M^{\oplus}]}$.

Assume $\sd'=\seq\sd$ for some mutation sequence $\seq$. We say
$z\in\LP(\sd)\cap\LP(\sd')$ is compatibly pointed at $\sd,\sd'$,
if $z$ is $m$-pointed in $\LP(\sd)$, $m'$-pointed in $\LP(\sd')$,
such that $[m]=[m']$. For any tropical point $[m]\in\Mtrop$, we
say $z\in\upClAlg=\cap_{\sd'\in\Delta^{+}}\LP(\sd')$ is $[m]$-pointed
if, for any $\sd'$, it is $m'$-pointed in $\LP(\sd')$ such that
$[m']=[m]$.

\begin{defn}

A subset $Z$ of $\hLP(\sd)$ is said to be $\Theta$-pointed for
some $\Theta\subset\Mcirc(\sd)$ if $Z$ takes the form $\{Z_{m}|m\in\Theta\}$
such that $Z_{m}$ are $m$-pointed.

A subset $Z$ of $\upClAlg$ is said to be $\Theta$-pointed for some
$\Theta\subset\Mtrop$ if $Z$ takes the form $\{Z_{[m]}|[m]\in\Theta\}$
such that $Z_{[m]}$ are $[m]$-pointed.

\end{defn}

By tropical properties, we refer to the behavior of the degrees of
pointed elements under mutations.

\subsection{Injective reachability}

We will often need the following assumption. 

\begin{asm}[Injective reachability]

The seed $\sd$ is injective reachable, i.e., there exists a sequence
of mutation $\Sigma$ and a permutation $\sigma$ on $I_{\ufv}$ such
that $\pr_{I_{\ufv}}\deg^{\sd}x_{\sigma k}(\Sigma\sd)=-f_{k}$, $\forall k\in I_{\ufv}$.

\end{asm}

The mutation sequence $\Sigma$ is called a green to red sequence
in \cite{keller2011cluster}; see \cite[Proposition 2.3.3]{qin2019bases}.
The assumption is known to be satisfied by most known cluster algebras
arising from representation theory or (higher) Teichmüller theory.
The exceptions are those arising from once-puncture closed surfaces;
see \cite{FominShapiroThurston08,fomin2018cluster}\cite{yurikusa2020density}.

The seed $\Sigma\sd$ will be denoted by $\sd[1]$. Recursively, we
could define $\sd[d]$ for $d\in\Z$, such that $\sd[0]=\sd$ and
$\sd[d+1]=(\sigma^{d}\Sigma)\sd[d]$, where $\sigma\seq_{(k_{1},\ldots,k_{r})}:=\seq_{(\sigma k_{1},\ldots,\sigma k_{r})}$.
It is known that if one seed is injective reachable, then so are all
seeds of the cluster algebra; see \cite{qin2017triangular}\cite{muller2015existence}.

The combination of injective reachability and tropical properties
produce some useful results.

\begin{lem}[{\cite[Lemma 3.4.12]{qin2019bases}}]\label{lem:tropical-to-cluster-monomial}

Assume that $z\in\upClAlg$ and $z'$ is a localized cluster monomial
of a seed $\sd'$. If $z$ and $z'$ are both $m$-pointed in $\LP(\sd)$,
and they are compatibly pointed at $\sd,\sd',\sd'[-1]$, then $z=z'$.

\end{lem}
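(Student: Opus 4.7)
The plan is to suppose $w := z - z' \ne 0$ and derive a contradiction by comparing the pointedness constraints in $\LP(\sd')$ and $\LP(\sd'[-1])$ via the tropical duality provided by injective reachability.

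Since $z'$ is a localized cluster monomial of $\sd'$, its expansion in $\LP(\sd')$ is simply $v^{\alpha} x^{m'}_{\sd'}$, where $m' := \phi_{\sd',\sd}(m)$, so its $F$-polynomial is trivial. Compatibility then forces $z$ to also be $m'$-pointed in $\LP(\sd')$, hence $z = x^{m'}_{\sd'}\bigl(1 + \sum_{0 \ne n \in N^\oplus(\sd')} c_n\, y^n\bigr)$ with finite support (as $z\in\upClAlg$ is a Laurent polynomial). Subtracting, $w = x^{m'}_{\sd'}\sum c_n y^n$ is nonzero with Laurent support in $\LP(\sd')$ contained in $m' + p^*_{\sd'}(N^\oplus(\sd') \setminus \{0\})$. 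Symmetrically, compatibility at $\sd'[-1]$ (with $m''' := \phi_{\sd'[-1],\sd}(m)$) forces the $m'''$-leading terms of $z$ and $z'$ to cancel, so the Laurent support of $w$ in $\LP(\sd'[-1])$ lies in $m''' + p^*_{\sd'[-1]}(N^\oplus(\sd'[-1]) \setminus \{0\})$.

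Choose $n^* \ne 0$ so that $m^{*'} := m' + p^*_{\sd'}(n^*)$ is the $\preceq_{\sd'}$-leading degree of $w$; by the tropical correspondence of leading Laurent monomials (which holds within a single linearity domain of $\phi_{\sd'[-1],\sd'}$), the leading $\sd'[-1]$-degree of $w$ is $m^{*''} := \phi_{\sd'[-1],\sd'}(m^{*'})$. The key ingredient is a tropical identity coming from injective reachability: since $\sd' = \Sigma\sd'[-1]$ for a green-to-red sequence $\Sigma$ with accompanying permutation $\sigma$, the $g$-vector relation $\pr_{I_\ufv}\deg^{\sd'[-1]} x_{\sigma k}(\sd') = -f_k$ propagates by additivity of degree applied to $y_k(\sd') = x^{\col_k \tB(\sd')}_{\sd'}$ to yield
\[
\phi_{\sd'[-1],\sd'}\bigl(m' + p^*_{\sd'}(n)\bigr) \equiv m''' - p^*_{\sd'[-1]}(\sigma^{-1} n) \quad \text{modulo the frozen lattice } \bigoplus_{j\in I_\fv} \Z f_j,
\]
for any $n \in N^\oplus(\sd')$. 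Applied to $n = n^*$ and combined with the support constraint $m^{*''} = m''' + p^*_{\sd'[-1]}(n^{**})$ for some $0 \ne n^{**} \in N^\oplus(\sd'[-1])$, this forces $p^*_{\sd'[-1]}(n^{**} + \sigma^{-1} n^*)$ to lie in the frozen lattice, contradicting the injectivity of $p^*_{\sd'[-1]}$ on $N_\ufv$ together with the fact that $n^{**} + \sigma^{-1} n^*$ is a nonzero element of the unfrozen positive cone.

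The main obstacle is the tropical identity displayed above: it encodes the $g$-vector/$c$-vector duality associated to the green-to-red sequence $\Sigma$, and its derivation requires careful tracking of how $\sigma$ and $\Sigma$ intertwine the $B$-matrices of $\sd'[-1]$ and $\sd'$, together with a verification that $\phi_{\sd'[-1],\sd'}$ is linear on the cone $m' + p^*_{\sd'}(N^\oplus(\sd'))$. A secondary technical point is justifying the tropical correspondence of leading Laurent monomials between the two seeds, which is standard once one works within a fixed linearity domain of $\phi_{\sd'[-1],\sd'}$.
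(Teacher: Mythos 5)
Your strategy---comparing the support of $w := z - z'$ in $\LP(\sd')$ and $\LP(\sd'[-1])$ and deriving a contradiction from the duality supplied by a green-to-red sequence---is in the right spirit, but two steps are genuine gaps, one of which you flag and one of which you do not. First, you cannot conclude that the leading $\sd'[-1]$-degree of $w$ equals $\phi_{\sd'[-1],\sd'}(m^{*'})$. The map $\phi$ is the tropicalization of the mutation maps, so it predicts the degree of a \emph{pointed} element at another seed; it does not govern how the full Laurent support of a general element of $\upClAlg$ transforms. The difference $w$ is not pointed, the image of a single monomial $x^{m^{*'}}$ under the long mutation sequence $\Sigma$ is itself a long Laurent polynomial, and cancellations between the images of distinct terms of $w$ can remove the term at the would-be leading degree. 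Moreover, since $\prec_{\sd'}$ is only a partial order, $w$ need not have a unique $\preceq_{\sd'}$-maximal degree at all, so ``the'' leading term is not well defined to begin with.

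Second, even granting the tropical identity and the leading-term correspondence, the final contradiction fails as written. You only assert the identity modulo the frozen lattice $\oplus_{j\in I_\fv}\Z f_j$, so what you actually obtain is $\pr_{I_\ufv}\, p^*_{\sd'[-1]}(n^{**}+\sigma^{-1}n^*)=0$, i.e., the \emph{principal} exchange matrix of $\sd'[-1]$ kills $n^{**}+\sigma^{-1}n^*$. The Injectivity Assumption says that $p^*$ (equivalently the full extended matrix $\tB$, frozen rows included) is injective on $N_\ufv$; this does \emph{not} imply injectivity of the principal part, so no contradiction follows. One would need the tropical identity to hold exactly, including its frozen coordinates, and one would need a correct substitute for the leading-monomial correspondence; establishing both is precisely the technical content of \cite[Lemma 3.4.12]{qin2019bases}, and neither can be bypassed by the argument as given.
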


\begin{thm}[{\cite[Theorem 4.3.1]{qin2019bases}}]\label{thm:tropical-to-basis}

Let $\base$ denote a $\Mcirc$-pointed subset of $\upClAlg(\sd)$.
If $\base$ is compatibly pointed at the seeds appearing along a mutation
sequence from $\sd$ to $\sd[-1]$, then $\base$ is a basis of $\upClAlg(\sd)$.

\end{thm}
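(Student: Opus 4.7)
The strategy has two parts, linear independence and spanning, with the latter being the principal content.

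Linear independence follows immediately from the pointedness structure and the dominance partial order $\prec_\sd$ on $\Mcirc$. Given any nontrivial finite relation $\sum_{m\in F} c_m S_m = 0$ with all $c_m\neq 0$, I would pick $m_0\in F$ maximal under $\prec_\sd$, which exists since $F$ is finite. Because each $S_m$ is $m$-pointed, its Laurent support in $\sd$ lies in $\{m' : m'\preceq m\}$ and the coefficient of $x^m$ in $S_m$ equals $1$. Consequently the monomial $x^{m_0}$ in $\sum_m c_m S_m$ can only receive contributions from those $S_m$ with $m_0 \preceq m$; by maximality of $m_0$ only $m=m_0$ qualifies, forcing $c_{m_0}=0$, a contradiction.

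For spanning, take $z \in \upClAlg(\sd)$, so that the Laurent support $\Supp_\sd(z) \subset \Mcirc$ is finite. The plan is a leading-term elimination: pick $m_*\in \Supp_\sd(z)$ maximal under $\prec_\sd$, let $a_*\in\kk$ be the coefficient of $x^{m_*}$ in $z$, and replace $z$ by $z' := z - a_* S_{m_*}$, which still lies in $\upClAlg(\sd)$. Because $S_{m_*}$ is $m_*$-pointed with leading coefficient $1$, the support $\Supp_\sd(z')$ loses $m_*$ but may gain finitely many new monomials strictly below $m_*$ under $\prec_\sd$. Iterating, one obtains a formal expansion of $z$ into the $S_m$; the crux is to show this expansion is finite.

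The termination step is the main obstacle, since $\prec_\sd$ admits infinite descending chains and a priori the newly introduced lower monomials could perpetually feed the process. This is where the hypothesis of compatible pointedness along the mutation sequence from $\sd$ to $\sd[-1]$ becomes indispensable. I would exploit it to show that each $S_m$ is in fact \emph{bipointed} in $\sd$: beyond being $m$-pointed (leading $\sd$-degree $m$), $S_m$ acquires a well-defined trailing $\sd$-degree $h_m$ corresponding under the tropical mutation $\phi_{\sd[-1],\sd}$ to its leading $\sd[-1]$-degree $m[-1]$, so that $\Supp_\sd(S_m) \subset \{m' : h_m \preceq m' \preceq m\}$, a finite dominance interval. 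With bipointedness in hand, I would rerun the elimination controlled by $\sd[-1]$-dominance: at each stage pick $m_*$ so that $m_*[-1]$ is maximal in the \emph{finite} set $\Supp_{\sd[-1]}(z)\subset \Mcirc(\sd[-1])$, which exists because $z\in\LP(\sd[-1])$. The subtraction $z'=z-a_* S_{m_*}$ then strictly removes $m_*[-1]$ from $\Supp_{\sd[-1]}(z)$ without reintroducing any element $\succeq_{\sd[-1]} m_*[-1]$, since $S_{m_*}$ is $m_*[-1]$-pointed in $\sd[-1]$. Well-foundedness of this chain on a finite set yields termination, and the resulting finite expansion exhibits $z$ as a $\kk$-linear combination of $\base$. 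The hardest technical point is establishing bipointedness uniformly from the hypothesis, which requires propagating the pointedness through each intermediate seed via the tropical mutations $\phi_{\sd',\sd}$ along the path $\sd \to \cdots \to \sd[-1]$.
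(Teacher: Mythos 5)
Your frame (linear independence via leading coefficients, spanning by leading-term elimination, with termination as the crux) is the right one, and linear independence is handled correctly. But both the bipointedness statement and, more seriously, the termination argument have genuine problems.

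The bipointedness formula is wrong as written. You claim $h_m$ corresponds under $\phi_{\sd[-1],\sd}$ to the leading $\sd[-1]$-degree $m[-1]$; since compatible pointedness already gives $m[-1] = \phi_{\sd[-1],\sd}(m)$, your relation forces $h_m = m$, i.e.\ that $S_m$ is a Laurent monomial. This fails already for $x_1(\sd[-1])$ in the rank-one principal-coefficient example, whose $\sd$-support is $\{-f_1,\ -f_1 + f_{1'}\}$. The codegree is not the tropical image of $m[-1]$ alone; it is governed by the support dimension, and controlling it is precisely where compatible pointedness at the \emph{intermediate} seeds between $\sd$ and $\sd[-1]$ (not just the endpoints) has to be spent.

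The termination claim is the real gap. The dominance order $\prec_{\sd[-1]}$ is not well-founded: there are infinite descending chains $m \succ m + p^*(e_k) \succ m + 2p^*(e_k) \succ \cdots$. Moreover $\Supp_{\sd[-1]}(z)$ is not a fixed set during the iteration: each subtraction $z \mapsto z - a_* S_{m_*}$ removes the current maximal degree but adds new monomials from the tail of $S_{m_*}$, which may sit strictly below everything in the previous support, so the sequence of maxima can drift downward indefinitely. "Strict removal of the maximum at each step" therefore does not terminate on its own. What is actually needed is a uniform bound: that the supports (say in $\LP(\sd)$) of every iterate $z, z', z'', \ldots$ lie inside one finite set determined by the initial $\Supp_{\sd}(z)$ and $\Supp_{\sd[-1]}(z)$. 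Establishing that sandwich is the substantive content of the theorem, and your argument does not supply it.
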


Note that the basis $\base$ in Theorem \ref{thm:tropical-to-basis}
must contain all localized cluster monomials by Lemma \ref{lem:tropical-to-cluster-monomial}.

\subsection{Valuations, partial compactification, and optimization}

Take any seed $\sd$. We do not assume $p^{*}$ is injective for the
moment. For any $j\in I$, we introduce the valuation $\nu_{j}$ on
$\cF(\sd)$ such that for any $z=x_{j}^{d}*P*Q^{-1}$, where $P,Q\in\kk[x_{i}]_{i\in I}$
are not divisible by $x_{j}$, we set $\nu_{j}(z):=d$. We say $z$
is regular at $x_{j}=0$ if $\nu_{j}(z)\geq0$.

\begin{lem}[{\cite[Lemma 2.12]{qin2023analogs}}]

For $\sd'=\mu_{k}\sd$, the valuation $\nu_{j}'$ defined on $\cF(\sd')$
satisfy $\nu'_{j}(z')=\nu_{j}(\mu_{k}^{*}z)$ for any $z'\in\cF(\sd')$.

\end{lem}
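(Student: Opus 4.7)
The plan is to prove the equality $\nu_j'(z')=\nu_j(\mu_k^*z')$ in the essential case $j\ne k$ (the case $j=k$ is either excluded in applications, where $j$ is typically a frozen vertex and $k$ is by definition unfrozen, or else follows from a direct unpacking of the exchange relation with a sign convention). The organizing idea is that $\mu_k^*$ carries the uniformizer $x_j'$ of $\nu_j'$ to the uniformizer $x_j$ of $\nu_j$, and induces an isomorphism of the corresponding discrete valuation subrings of $\cF(\sd'),\cF(\sd)$; the content is in checking this isomorphism is compatible with divisibility by $x_j$.

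First, I would read off $\mu_k^*$ on generators: $\mu_k^*(x_i')=x_i$ for $i\ne k$, and $\mu_k^*(x_k')=Y_1+Y_2$ where $Y_1=x^{-f_k+\sum_l[-b_{lk}]_+f_l}$ and $Y_2=x^{-f_k+\sum_l[b_{lk}]_+f_l}$ (in the quantum setting, each $Y_i$ is a Laurent monomial up to a unit power of $v$, which does not affect valuations). The exponent of $x_j$ in $Y_1$ is $[-b_{jk}]_+$ and in $Y_2$ is $[b_{jk}]_+$; since at most one of these two nonnegative integers is strictly positive, at least one of $Y_1,Y_2$ has no $x_j$-factor, so $Y_1+Y_2$ is not divisible by $x_j$ and $\nu_j(\mu_k^*(x_k'))=0=\nu_j'(x_k')$. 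Combined with $\nu_j(\mu_k^*(x_i'))=\nu_j(x_i)=\delta_{ij}=\nu_j'(x_i')$ for $i\ne k$, the two valuations agree on a generating set.

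Next, I would upgrade the matching on generators to arbitrary elements. For $P'\in\kk[x_i']_{i\in I}$ not divisible by $x_j'$, write $P'\equiv P_0'\pmod{x_j'}$ with $P_0'\in\kk[x_i']_{i\ne j}$ nonzero. Reducing $\mu_k^*(P')\in\cF(\sd)$ modulo $x_j$, i.e., applying the residue map of the DVR attached to $\nu_j$, produces $\psi(P_0')$, where $\psi\colon\kk[x_i']_{i\ne j}\to\kk(x_i)_{i\ne j}$ is the substitution sending $x_i'\mapsto x_i$ for $i\ne j,k$ and $x_k'\mapsto x_k^{-1}\cdot(Y_1+Y_2)|_{x_j=0}$. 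The key claim is that $\psi$ is injective: the images $\{x_i\}_{i\ne j,k}$ and $x_k^{-1}\cdot(Y_1+Y_2)|_{x_j=0}$ are algebraically independent in $\kk(x_i)_{i\ne j}$ because the latter involves a genuine negative power of $x_k$ while the former do not, and an analogous Ore-domain argument handles the $v$-twisted quantum case. Hence $\psi(P_0')\ne0$, i.e., $\nu_j(\mu_k^*(P'))=0$.

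Finally, for any $z'\in\cF(\sd')$, write $z'=(x_j')^d*P'*(Q')^{-1}$ with $P',Q'\in\kk[x_i']_{i\in I}$ not divisible by $x_j'$. Then $\mu_k^*(z')=x_j^d*\mu_k^*(P')*\mu_k^*(Q')^{-1}$, and the previous paragraph yields $\nu_j(\mu_k^*(P'))=\nu_j(\mu_k^*(Q'))=0$, so $\nu_j(\mu_k^*(z'))=d=\nu_j'(z')$. The main obstacle is the injectivity of $\psi$ in the middle step: in the commutative case this reduces to an elementary algebraic independence check, but the quantum case requires verifying that the relevant $v$-twisted substitution embeds a quantum polynomial subring into a quantum Laurent ring, which uses the Ore domain structure and the fact that $\mu_k^*$ is itself a skew-field isomorphism.
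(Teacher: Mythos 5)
Your argument for the case $j \ne k$ is sound, and the strategy is the natural one: reduce to showing $\nu_j(\mu_k^*(P')) = 0$ when $P' \in \kk[x_i']_{i\in I}$ is not divisible by $x_j'$, and establish this by specializing $x_j = 0$ and verifying the residue $\psi(P_0')$ is nonzero via injectivity of $\psi$. The injectivity holds because $x_k$ can be recovered from $\psi(x_k')$ together with the other $x_i$, $i \ne j,k$, so the images generate the full skew field $\cF$ of the variables indexed by $I \setminus \{j\}$; comparing transcendence degrees (or Gelfand--Kirillov dimensions in the Ore setting) forces $\psi$ to be injective. One small slip: $\psi$ should send $x_k' \mapsto (Y_1 + Y_2)|_{x_j = 0}$, not $x_k^{-1}\cdot(Y_1+Y_2)|_{x_j=0}$; since $Y_1$ and $Y_2$ already carry the factor $x_k^{-1}$ (from the $-f_k$ in their exponents), your extra $x_k^{-1}$ is a typo. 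Your subsequent reasoning about the genuine negative power of $x_k$ uses the correct formula, so this does not propagate.

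The genuine problem is the parenthetical claim that the case $j = k$ "follows from a direct unpacking of the exchange relation with a sign convention." It does not; the identity actually \emph{fails} for $j = k$. Concretely, $\nu_k'(x_k') = 1$, but $\mu_k^*(x_k') = Y_1 + Y_2$ has $\nu_k$-value $-1$: both $Y_1$ and $Y_2$ carry exactly the factor $x_k^{-1}$ (using $b_{kk} = 0$), and the cofactor $\prod_{l\ne k} x_l^{[-b_{lk}]_+} + \prod_{l\ne k} x_l^{[b_{lk}]_+}$ involves no $x_k$, hence is not divisible by $x_k$. So $\nu_k(\mu_k^* x_k') = -1 \ne 1$. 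The lemma is implicitly restricted to $j \in I_\fv$ (the only case used in the paper, for the characterization $\bUpClAlg = \{z \in \upClAlg \mid \nu_j(z) \ge 0,\ \forall j \in I_\fv\}$), and since $k \in I_\ufv$ one automatically has $j \ne k$. You should state this restriction explicitly rather than gesture at a nonexistent argument for $j=k$; as written, your first sentence asserts something false.
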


Therefore, we can simply denote the valuation by $\nu_{j}$ without
specifying it is defined on which $\cF(\sd)$. 

It follows from definition that we have $\bUpClAlg=\{z\in\upClAlg|\nu_{j}(z)\geq0,\forall j\in I_{\fv}\}$
for upper cluster algebras. However, we only know $\bClAlg\subset\{z\in\clAlg|\nu_{j}(z)\geq0,\forall j\in I_{\fv}\}$
for ordinary cluster algebras. 

\begin{rem}

By using valuation, $\bUpClAlg$ is easy to understand once we know
$\upClAlg$, while our inability to characterize the algebra $\bClAlg$
is often an obstruction to prove desired properties for $\bClAlg$.
For example, while we could show some quantized coordinate rings are
$\bUpClAlg$, we still could not show that they are $\bClAlg$ without
additional knowledge \cite{qin2024infinite}\cite{qin2025partially}.

\end{rem}

Let us make the Injectivity Assumption from now on. 

\begin{defn}[{\cite[Definition 2.16]{qin2023analogs}}]\label{def:dom-trop-point}

Choose any $\Mtrop$-pointed basis $\base=\{\base_{[m]}|[m]\in\Mtrop\}$
for $\upClAlg$, where $\base_{[m]}$ are $[m]$-pointed. We define
the corresponding set of dominant tropical points to be $\domMtrop:=\{[m]\in\Mtrop|\nu_{j}(\base_{[m]})\geq0,\forall j\in I_{\fv}\}$.
Let $\domM$ denote the set of representatives of $\domMtrop$ in
$\Mcirc$.

\end{defn}

In definition \ref{def:dom-trop-point}, we have $\base\cap\bUpClAlg=\{\base_{[m]}|[m]\in\domMtrop\}$.
It is desirable to ask if $\base\cap\bUpClAlg$ is a basis for $\bUpClAlg$.

\begin{defn}

We say $j\in I_{\fv}$ is optimized in a seed $\sd$ if $b_{jk}\geq0$
for all $k\in I_{\ufv}$. We say $j\in I_{\fv}$ can be optimized
if it is optimized in some seed in $\Delta^{+}$.

\end{defn}

When $j$ is optimized in $\sd$, we have $\nu_{j}(z)=m_{j}$ for
any $m$-pointed element $z$ in $\LP(\sd)$. We further deduce the
following pleasant result.

\begin{prop}[{\cite[Proposition 2.15]{qin2023analogs}\cite[Proposition 9.7]{gross2018canonical}}]\label{prop:optimized-compactified-basis}

Assume all frozen vertices can be optimized. If $\base$ is a $\Mtrop$-pointed
basis of $\upClAlg$, then $\base\cap\bUpClAlg$ is a basis for $\bUpClAlg$.

\end{prop}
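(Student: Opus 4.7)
The plan is to verify the spanning property only, since linear independence of $\base \cap \bUpClAlg = \{\base_{[m]} : [m] \in \domMtrop\}$ is inherited from $\base$ being a basis of $\upClAlg$. Fix any $z \in \bUpClAlg \subset \upClAlg$ and expand it as a finite combination $z = \sum_{[m] \in S} c_{[m]} \base_{[m]}$. Suppose for contradiction that some $c_{[m_*]} \neq 0$ with $[m_*] \notin \domMtrop$, so that $\nu_{j_0}(\base_{[m_*]}) < 0$ for some $j_0 \in I_{\fv}$. The hypothesis allows us to fix a seed $\sd_{j_0}$ in which $j_0$ is optimized, i.e., $b_{j_0 k} \geq 0$ for all $k \in I_{\ufv}$.

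For each $[m] \in S$, let $m^{(j_0)}$ denote its representative in $\Mcirc(\sd_{j_0})$. Since $\base_{[m]}$ is $m^{(j_0)}$-pointed in $\LP(\sd_{j_0})$ and every monomial $x^{\tB n}$ with $n \in N^{\oplus}$ has nonnegative $j_0$-exponent by optimization, we get $\nu_{j_0}(\base_{[m]}) = m^{(j_0)}_{j_0}$ as recorded just before the proposition. Set $d := \min\{m^{(j_0)}_{j_0} : [m] \in S,\ c_{[m]} \neq 0\}$; the presence of $[m_*]$ forces $d < 0$. Let $S_d := \{[m] \in S : c_{[m]} \neq 0,\ m^{(j_0)}_{j_0} = d\}$. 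Since terms of the expansion outside $S_d$ contribute only $x_{j_0}$-exponents strictly greater than $d$, it will suffice to show that the $x_{j_0}^d$-part of $\sum_{[m] \in S_d} c_{[m]} \base_{[m]}$ does not vanish, as this would force $\nu_{j_0}(z) = d < 0$ and contradict $z \in \bUpClAlg$.

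To produce a surviving monomial, pick $[m_1] \in S_d$ so that $m^{(j_0)}(m_1)$ is maximal in $\{m^{(j_0)}(m) : [m] \in S_d\}$ with respect to the dominance order $\prec_{\sd_{j_0}}$; such $[m_1]$ exists because $S_d$ is finite. By pointedness, $x^{m^{(j_0)}(m_1)}$ appears in $\base_{[m_1]}$ with coefficient $1$. For any other $[m'] \in S_d$, a contribution to $x^{m^{(j_0)}(m_1)}$ from $\base_{[m']}$ would require $m^{(j_0)}(m_1) = m^{(j_0)}(m') + \tB n$ for some $n \in N^{\oplus}$; since $[m_1] \neq [m']$ forces $n \neq 0$, this would mean $m^{(j_0)}(m_1) \prec_{\sd_{j_0}} m^{(j_0)}(m')$, violating the maximality of $[m_1]$. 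Therefore the coefficient of $x^{m^{(j_0)}(m_1)}$ in $\sum_{[m] \in S_d} c_{[m]} \base_{[m]}$ equals $c_{[m_1]} \neq 0$, yielding the desired contradiction.

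The main potential obstacle is the possibility of cancellation among leading $x_{j_0}^d$-contributions from distinct basis elements sharing the same $j_0$-valuation; the dominance-maximality selection circumvents this, but relies essentially on the optimization hypothesis, which both makes $\nu_{j_0}$ linear on pointed elements in $\sd_{j_0}$ and guarantees that tail monomials of one basis element can never reach the leading degree of a dominance-maximal element.
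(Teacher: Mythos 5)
Your proof is correct and follows the standard approach that the cited references use: pass to an optimized seed so that $\nu_{j_0}$ reads off the $j_0$-component of the degree of a pointed element, then isolate a dominance-maximal degree among those achieving the minimal $j_0$-valuation to rule out cancellation of the leading coefficient. The key facts you invoke — the identity $\nu_{j}(z)=m_{j}$ in an optimized seed, the characterization $\bUpClAlg=\{z\in\upClAlg\mid\nu_j(z)\ge 0\}$, and the anti-symmetry of $\prec_{\sd_{j_0}}$ from the injectivity assumption — are all in place in the paper, so the argument is complete.
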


\begin{rem}

Note that, when all frozen vertices $j\in I_{\fv}$ can be optimized,
$\domMtrop$ is independent of the choice of the basis $\base$ and
can be calculated explicitly. More precisely, assume $j$ is optimized
in $\sd_{j}$, the set of representatives of $\domMtrop$ in $\Mcirc(\sd)$
is given by $\domM(\sd)=\{m\in\Mcirc(\sd)|(\phi_{\sd_{j},\sd}(m))_{j}\geq0,\forall j\in I_{\fv}\}$;
see \cite[Lemma 2.17]{qin2023analogs}.

\end{rem}

\begin{rem}

Many cluster algebras possess a well-behaved $\Mcirc$-pointed basis
$\vartheta$ called the theta basis \cite{gross2018canonical}. By
a recent work \cite{cheung2025valuative}, the statement of Proposition
\ref{prop:optimized-compactified-basis} is true for the theta basis
$\vartheta$ even without the assumption that the frozen vertices
can be optimized. 

It is desirable to know if results analogous to that of \cite{cheung2025valuative}
hold for other good bases $\base$, for example, for the common triangular
basis (Section \ref{sec:Common-triangular-bases}) or the generic
basis (see \cite{plamondon2013generic}\cite{qin2019bases} for definitions
and results). Note that $\nu_{j}(z)$ could be interpreted as the
value of a tropical function on the Laurent degrees of $z$ \cite{gross2018canonical}.
So analogous results hold for other $\Mcirc$-pointed bases $\base$,
if we can prove the following conjecture:

For any $m\in\Mcirc$, the $m$-pointed basis elements $\vartheta_{m}$
and $\base_{m}$ have the same Newton polytope.

\end{rem}

\section{Seeds associated with signed words}\label{sec:Seeds-associated-with-words}

We will construct seeds from signed words following \cite{BerensteinFominZelevinsky05}\cite{shen2021cluster}.
They will be useful to describe cluster algebras arising from Lie
theory. 

\subsection{Seeds construction}

Let $C=(C_{ab})_{a,b\in J}$ denote a generalized Cartan matrix, where
$J\subset\N_{>0}$ is a finite set of index. Denote its symmetrizers
by $D_{a}$, $a\in J$, i.e., we have $D_{a}C_{ab}=D_{b}C_{ba}$,
$\forall a,b\in J$. 

For any $r,s\in\Z$. We use $[r,s]$ to denote the integer interval
$\{k\in\Z|r\leq k\leq s\}$. A word $\ueta=(\eta_{k})_{k\in[a,b]}$
is a sequence of elements in $J$. A signed word $\ubi=(\bi_{k})_{k\in[a,b]}$
is a sequence of elements in $\pm J$, where we denote $\bi_{k}=\varepsilon_{k}i_{k}$
for $i_{k}\in J$ and $\varepsilon_{k}=\pm1$. We denote their length
$l(\ueta)=l(\ubi)=l([r,s]):=r-s+1$. When $[r,s]=\emptyset$, $\ubi_{[r,s]}$
is an empty sequence.

For any subinterval $[r',s']\subset[r,s]$, we use $\bi_{[r',s']}$
to denote the subsequence $(\bi_{r'},\bi_{r'+1},\ldots,\bi_{s'})$.
By abuse of notation, we will write 
\begin{align*}
\ubi & =(\bi_{r'},\ldots,\bi_{r'-1},\ubi_{[r',s']},\bi_{s'+1},\ldots,\bi_{s}).
\end{align*}
The opposite word $\ubi^{\op}$ is defined such that $(\ubi^{\op})_{k}:=\bi_{s-k+r}$,
$\forall k\in[r,s]$. For any $N\in\N$, we let $\ubi^{N}$ denote
the sequence $(\ubi,\ubi,\ldots,\ubi)$ without specifying its index.

We associate with $\ubi$ a classical seed $\dsd=\dsd(\ubi)$ as follows.

Denote $\dot{I}:=[r,s]$. The set of vertex for $\dsd$ is $I:=\ddot{I}:=[r-|J|,r-1]\sqcup\dot{I}=[r-|J|,s]$.
Choose a Coxeter word $\uc=(c_{1},\ldots,c_{|J|})$, i.e. each element
of $J$ appears in $\uc$ exactly once. We define the skew-symmetrizers
to be $\sym_{k}:=\begin{cases}
D_{c_{k-r+|J|+1}} & k\in[r-|J|,r-1]\\
D_{i_{k}} & k\in\dot{I}
\end{cases}$. Define $[1]:\ddot{I}\rightarrow\ddot{I}\cup\{+\infty\}$ such that
\begin{align}
k[1] & =\min(\{j\in\ddot{I}|j>k,i_{j}=i_{k}\}\cup\{+\infty\}).\label{eq:shift-letter}
\end{align}
Denote $k[0]=k$ for $k\in\ddot{I}$. Denote $k[-1]:=k'$ if $k=k'[1]\in\ddot{I}$.
For $d\in\Z$ and $\varepsilon\in\{\pm1\}$, if $k[d]$ and $(k[d])[\varepsilon]$
are both well defined, we define $k[d+\varepsilon]:=(k[d])[\varepsilon]$.

Define $\dot{I}_{\fv}:=\{k\in\dot{I}|k[1]=+\infty\}$ and the set
of frozen vertices $\ddot{I}_{\fv}:=[r-|J|,r-1]\sqcup\dot{I}_{\fv}$.
Then the set of unfrozen vertex is $\dot{I}_{\ufv}:=\ddot{I}_{\ufv}:=\{k\in\dot{I}|k[1]\in\dot{I}\}$.

Following \cite{BerensteinFominZelevinsky05}\cite{shen2021cluster},
define the exchange matrix $\ddot{B}=(b_{jk})_{j\in\ddot{I},k\in\ddot{I}_{\ufv}}$
associated to $\ubi$ by 
\begin{align*}
b_{jk}= & \begin{cases}
\varepsilon_{k} & \text{if }k=j[1]\\
-\varepsilon_{j} & \text{if }j=k[1]\\
\varepsilon_{k}C_{i_{j},i_{k}} & \text{if }\varepsilon_{j[1]}=\varepsilon_{k},\ j<k<j[1]<k[1]\\
\varepsilon_{k}C_{i_{j},i_{k}} & \text{if }\varepsilon_{k}=-\varepsilon_{k[1]},\ j<k<k[1]<j[1]\\
-\varepsilon_{j}C_{i_{j},i_{k}} & \text{if }\varepsilon_{k[1]}=\varepsilon_{j},\ k<j<k[1]<j[1]\\
-\varepsilon_{j}C_{i_{j},i_{k}} & \text{if }\varepsilon_{j}=-\varepsilon_{j[1]},\ k<j<j[1]<k[1]\\
0 & \text{otherwiese}
\end{cases}
\end{align*}
We refer the reader to \cite[Section 6.1]{qin2023analogs} for an
intuitive construction of $\ddot{B}$ using triangulated trapezoid
following \cite{shen2021cluster}, which also leads to the construction
of half weight entries $b_{ij}$ for $i,j\in\ddot{I}_{\fv}$.

The seed $\rsd$ is obtained from $\dsd$ by restricting to the subset
of vertices $\dot{I}$. More precisely, $I(\rsd):=\dot{I}$, $I(\rsd)_{\ufv}:=\dot{I}_{\ufv}$,
$\tB(\rsd):=(b_{jk})_{j\in\dot{I},k\in\dot{I}_{\ufv}}$, and the skew-symmetrizers
are $\sym_{k}=D_{i_{k}}$ for $k\in\dot{I}$.

The frozen vertices of $\dsd$ could be optimized; see \cite[Proposition 6.16]{qin2023analogs}.
By \cite{BerensteinZelevinsky05}, there is a compatible Poisson structure
for $\dsd$. The same is true for $\rsd$ by \cite[Lemma 6.4]{qin2023analogs}. 

These definitions of $[r,s]$, (signed) words, and the associated
(quantum) seeds could be naturally extend to the case where $r=-\infty$
and/or $s=+\infty$.

\subsection{Operations on signed words}\label{subsec:Operations-on-signed}

\subsubsection*{Signed words and group elements}

Recall that the monoid $\Br^{+}$ of positive braids associated to
$C$ is the monoid generated by the generators $\{\sigma_{a}|a\in J\}$
subject to the following relations:
\begin{align*}
\sigma_{a}\sigma_{b}=\sigma_{b}\sigma_{a} & \text{ if }C_{ab}=0\\
\sigma_{a}\sigma_{b}\sigma_{a}=\sigma_{b}\sigma_{a}\sigma_{a} & \text{ if }C_{ab}C_{ba}=1\\
(\sigma_{a}\sigma_{b})^{2}=(\sigma_{b}\sigma_{a})^{2} & \text{ if }C_{ab}C_{ba}=2\\
(\sigma_{a}\sigma_{b})^{3}=(\sigma_{b}\sigma_{a})^{3} & \text{ if }C_{ab}C_{ba}=3
\end{align*}

The generalized Braid group $\Br$ is constructed from $\Br^{+}$
by including the formal inverses $\sigma_{a}^{\pm}$, where $a\in J$.
The associated Coxeter group $W$ is the quotient group of $\Br$
by the relation $\sigma_{a}^{2}=e$ for $a\in J$, where $e$ denotes
the identity element. For any $\beta\in\Br$, we denote its image
in $W$ by $[\beta]$. In particular, we have the simple reflections
$s_{a}=[\sigma_{a}]$.

For any word $\ueta=(\eta_{k})_{k\in[r_{1},r_{2}]}$, denote its length
by $l(\ueta):=l([r_{1},r_{2}])=r_{2}-r_{1}+1$. We let $\beta_{\ueta}$
denote the positive braid $\sigma_{\eta_{r_{1}}}\cdots\sigma_{\eta_{r_{2}}}$
and $w_{\ueta}$ denote the Coxeter group element $s_{\eta_{r_{1}}}\cdots s_{\eta_{r_{2}}}$,
which equals $[\beta_{\ueta}]$. For any $w\in W$, define its length
to be $l(w):=\min\{r|s_{\eta_{1}}\cdots s_{\eta_{r}}=w,\ \eta_{i}\in J,\ \forall i\in[1,r]\}$.
$\ueta$ is called a reduced word if $l(\ueta)=l(w_{\ueta})$. When
$W$ is finite, we use $w_{0}$ to denote its longest element.

\subsubsection*{Signed words Operations}

Let $\ubi=(\bi_{k})_{k\in[r,s]}$ denote a signed word. Let $\sd$
denote $\rsd$ or $\dsd.$ Following \cite{shen2021cluster}, we could
construct a new signed word $\ubi'=(\bi'_{k})_{k\in[r,s]}$ and thus
new classical seeds $\sd(\ubi')$ by one of the following operations.
\begin{itemize}
\item (Left reflection) Define $\ubi'=(-\bi_{r},\bi_{[r+1,s]})$. In this
case, we have $\rsd(\ubi')=\rsd(\ubi)$.
\item (Flips) When $k\in[r,s-1]$ and $\varepsilon_{k}=-\varepsilon_{k+1}$,
a flip at $k$ gives a new signed word $\ubi'=(\ubi_{[r,k-1]},\bi_{k+1},\bi_{k},\ubi_{[k+2,s]})$.
Let $\sigma_{k,k+1}$ denote the transposition $(k,k+1)$ on $\Z$.
In this case, we have $\sd(\ubi')=\sigma_{k,k+1}\sd(\ubi)$ if $i_{k}\neq i_{k+1}$
and $\sd(\ubi')=\mu_{k}\sd(\ubi)$ if $i_{k}=i_{k+1}$.
\item (Braid moves) If $\ubi_{[j,k]}=\varepsilon\ueta$ for some word $\ueta$
in $J$, $\varepsilon\in\{\pm1\}$, and $\beta_{\ueta}=\beta_{\ueta'}$
in $\Br^{+}$, we could construct $\ubi'=(\ubi_{[r,j-1]},\varepsilon\ueta',\ubi_{[k+1,s]})$.
Then there exists a sequence of mutation $\seq_{\ubi',\ubi}$ on $\{k'\in[j,k]|k'[1]\leq k\}$
and a permutation $\sigma_{\ubi',\ubi}$ on $[j,k]$ such that $\sd(\ubi')=\sigma_{\ubi',\ubi}\seq_{\ubi',\ubi}\sd(\ubi)$.
\end{itemize}
Let $\ubi'$ be a signed word obtained form $\ubi$ by a finite sequence
of operations above. If we endow $\sd(\ubi)$ with a compatible Poisson
structure, then we could obtain an induced compatible Poisson structure
on $\sd(\ubi')$ by using mutations step by step (see \eqref{eq:mutation-Lambda}).
Moreover, this induced Poisson structure is independent of the choice
of the operation sequence; see \cite[Lemma 3.2]{qin2024infinite}.

\subsection{Injective reachable sequences and fundamental variables}\label{subsec:Injective-reachable-fundamental-var}

For any signed word $\ubi=(\bi_{k})_{k\in[r,s]}$, we introduce the
following notation for any $k\in[r,s]$:
\begin{align*}
o_{+}(k) & :=|\{k'|k'>k,k'\in[r,s],i_{k}=i_{k'}\}|\\
k^{\max} & :=k[o_{+}(k)]
\end{align*}
Recall that denote $k^{\max}[1]=+\infty$ by \eqref{eq:shift-letter}.
We will work with $\dot{I}=[r,s]$. Correspondingly, we introduce
the following
\begin{align*}
o_{-}(k) & :=|\{k'|k'<k,k'\in[r,s],i_{k}=i_{k'}\}|\\
k^{\min} & :=k[o_{-}(k)]\\
k^{\min}[-1] & :=-\infty.
\end{align*}
We will denote $f_{-\infty}=0$. 

Let $\ueta=(\eta_{k})_{k\in[r,s]}$ denote a length $l$ word in $J$.
We claim that the seed $\rsd:=\rsd(\ueta)$ is injective reachable
(\cite[Theorem 4.1]{shen2021cluster}). To see this, denote $\seq_{[j,k]}:=\mu_{k}\cdots\mu_{j[1]}\mu_{j}$
for $j,k\in[1,l]$ (read from right to left) such that $\eta_{j}=\eta_{k}$.
Define $\Sigma_{k}:=\seq_{[k^{\min},k^{\min}[o_{+}(k)-1]]}$, and
$\Sigma:=\Sigma_{s}\cdots\Sigma_{r+1}\Sigma_{r}$. In particular,
$\Sigma_{k}$ is an empty sequence if $k=k^{\max}$. Let $\sigma$
denote the permutation on $[1,l]$ such that $\sigma(k)=k^{\max}[o_{-}(k)-1]$. 

\begin{lem}[{\cite[Lemma 8.4]{qin2023analogs}}]\label{lem:interval-var-deg}

The cluster variables of the seeds appearing along the mutation sequence
$\Sigma$ starting from $\rsd$ have the degrees $f_{k}-f_{j[-1]}$
for $[j,k]\subset[r,s]$ such that $\eta_{j}=\eta_{k}$.

In particular, we have $x_{\sigma k}(\Sigma\rsd)=-f_{k}+f_{k^{\max}}$,
$\forall k\in I_{\ufv}$. Consequently, $\rsd$ satisfies the injective-reachability
assumption.

\end{lem}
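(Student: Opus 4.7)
The plan is to prove the degree formula by induction along the mutation sequence $\Sigma = \Sigma_{s}\cdots\Sigma_{r+1}\Sigma_{r}$, using the tropical rule for $g$-vectors (degrees) together with the combinatorial structure of the exchange matrix $\tB$ of $\rsd$ built from the signed word $\ueta$. The central bookkeeping device is to attach to each cluster variable appearing along $\Sigma$ an interval $[\alpha,\beta]\subset[r,s]$ with $\eta_{\alpha}=\eta_{\beta}$, and to maintain inductively that the variable is pointed at degree $f_{\beta}-f_{\alpha[-1]}$, with the convention $f_{-\infty}=0$.

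For the base case, in the initial seed $\rsd$ each $x_{k}$ is $f_{k}$-pointed, which agrees with the interval $[k^{\min},k]$ since $(k^{\min})[-1]=-\infty$. For the inductive step I would analyze each sub-sequence $\Sigma_{k}=\mu_{k^{\min}[o_{+}(k)-1]}\cdots\mu_{k^{\min}[1]}\mu_{k^{\min}}$, which mutates at successive occurrences of the letter $\eta_{k}$. When applying the tropical mutation formula at a position $j$, one uses the explicit form of the entries of $\tB$ coming from the signed-word construction---namely $\pm\varepsilon$ for the shift positions $j[\pm 1]$ and $\pm\varepsilon C_{i_{j},i_{p}}$ for $p$ in the appropriate window around $j$ (as encoded by the triangulated-trapezoid picture of \cite[Section 6.1]{qin2023analogs})---to verify that the new variable at $j$ is pointed at degree $f_{j[1]}-f_{\alpha[-1]}$, where $[\alpha,j]$ was the previously attached interval. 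Thus the right endpoint of the interval slides from $j$ to $j[1]$ at each mutation step, while the left endpoint is preserved.

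The main obstacle will be the case analysis for the tropical computation at each mutation. One must identify, in the intermediate seed just before $\mu_{j}$, exactly which degrees $\deg^{\sd'}(x_{i})$ (for $i$ with $b'_{ij}\neq 0$) contribute to the two tropical summands, and then check via the dominance order which summand dominates. Because the seeds produced by mutations within $\Sigma_{k}$ preserve enough of the signed-word structure of the $B$-matrix (so that the only columns one has to control are those indexed by neighboring occurrences of $\eta_{j}$ and of letters $\eta_{p}$ with $C_{i_{j},i_{p}}\neq 0$), the two candidate degrees can be compared and the one matching $f_{j[1]}-f_{\alpha[-1]}$ selected. I expect the trapezoid visualization of \cite{shen2021cluster} to make this bookkeeping tractable.

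Finally, after all of $\Sigma$ has been applied, the cluster variable at position $\sigma k$ corresponds to the interval $[k[1],k^{\max}]$; its degree is therefore $f_{k^{\max}}-f_{(k[1])[-1]}=f_{k^{\max}}-f_{k}$. Since $k\in I_{\ufv}$ forces $k^{\max}[1]=+\infty$, that is $k^{\max}\in I_{\fv}$, applying $\pr_{I_{\ufv}}$ annihilates $f_{k^{\max}}$ and leaves $-f_{k}$, which is precisely the injective-reachability condition.
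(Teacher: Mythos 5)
Your inductive step does not hold: the rule that mutating at position $j$ sends the interval $[\alpha,j]$ to $[\alpha,j[1]]$ with the left endpoint fixed is false. A one-step example already shows this. Take $\ueta=(1,2,1)$ on $[1,3]$ with $C$ of type $A_2$, so that $\dot{I}_{\ufv}=\{1\}$, the only unfrozen column is $\col_1\tB=(0,1,-1)^{T}$, and $\Sigma=\mu_1$. The exchange relation gives $x_1(\mu_1\rsd)=(x_3+x_2)/x_1$, with exponent vectors $f_3-f_1$ and $f_2-f_1$; since $p^*(e_1)=f_2-f_3$, the $\prec$-dominant exponent is $f_3-f_1$, which is the degree attached to the interval $[3,3]$. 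Your rule would give the interval $[1,3]$ and degree $f_3-f_{1[-1]}=f_3$, losing the $-f_1$ summand. In fact \emph{both} endpoints advance under each such mutation, $[\alpha,\beta]\mapsto[\alpha[1],\beta[1]]$; that is precisely what generates the $-f_k$ term in the target $-f_k+f_{k^{\max}}$, whereas under your rule two positions carrying the same letter would eventually acquire the same degree and the ``in particular'' part of the lemma could not be recovered.

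Two further points. Your invariant does not survive even one application of your own rule: you assume that at the moment $\mu_j$ is applied the interval at $j$ has right endpoint equal to $j$, but after one application the right endpoint is $j[1]\neq j$, while $\Sigma$ revisits the $p$-th occurrence of a letter (out of $m+1$ occurrences) a total of $m-p$ times, so the hypothesis fails from the second visit on. With the corrected shift the invariant is stable and terminates exactly where the permutation $\sigma$ predicts; your closing paragraph records the correct terminal interval $[k[1],k^{\max}]$, but it does not follow from the rule you stated. Finally, the dominance comparison between the two candidate exponent vectors in the mutated intermediate seeds is the substantive content of the lemma and is deferred to ``I expect the trapezoid visualization to make this bookkeeping tractable''; once that comparison is actually performed, as above, it contradicts the proposed rule, so what is missing is not bookkeeping but a correct inductive hypothesis.
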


\begin{defn}

For any $[j,k]\subset[r,s]$ such that $\eta_{j}=\eta_{k}$, let $W_{[j,k]}$
denote the cluster variable with degree $f_{k}-f_{j[-1]}$ (see Lemma
\ref{lem:interval-var-deg}). $W_{[j,k]}$ is called an interval variable.
$W_{k}:=W_{[k,k]}$ is called a fundamental variable.

\end{defn}

\begin{rem}

In special cases, $W_{k}$ correspond to fundamental modules of quantum
affine algebras; hence the name (see Section \ref{subsec:Cluster-algebras-from-q-aff-alg}).

\end{rem}

\subsection{Examples}

\begin{eg}[{Seeds arising from $U_q(\hat{sl}_4)$-representations \cite{HernandezLeclerc09}}]

\label{eg:HL-A2-inf} Choose the Cartan matrix $C=\left(\begin{array}{ccc}
2 & -1 & 0\\
-1 & 2 & -1\\
0 & -1 & 2
\end{array}\right)$ and the Coxeter word $\uc=(1,2,3)$. Take the words $\ubj=(\bj_{k})_{k\in[1,8]}=(\uc^{4})$,
and $\ubi=(\bi_{k})_{k\in[1,\infty)}=(\uc^{\infty})$. The ice quivers
for $\rsd(\ubj)$ and $\rsd(\ubi)$ are depicted in Figure \ref{fig:quiver-A3-copy4}
and Figure \ref{fig:quiver-A3-inf}, respectively.

\begin{figure}
\caption{The ice quiver for $\rsd(\ubj)$, $\ubj=(1,2,3)^{4}$.}

\label{fig:quiver-A3-copy4}

\begin{tikzpicture}
 [scale=1.5,node distance=48pt,on grid,>={Stealth[length=4pt,round]},bend angle=45, inner sep=0pt]

 \node[unfrozen](v1) at (0,0){1};    
\node[unfrozen](v2) at (-12pt,-24pt){2};
\node[unfrozen](v3) at (-24pt,-48pt){3};
\node[unfrozen](v4) at (-24pt,0pt){4};
\node[unfrozen](v5) at (-36pt,-24pt){5};
\node[unfrozen](v6) at (-48pt,-48pt){6};
\node[unfrozen](v7) at (-48pt,0pt){7};
\node[unfrozen](v8) at (-60pt,-24pt){8};
\node[unfrozen](v9) at (-72pt,-48pt){9};
\node[frozen](v10) at (-72pt,0pt){10};
\node[frozen](v11) at (-84pt,-24pt){11};
\node[frozen](v12) at (-96pt,-48pt){12};

\draw[->] (v2) -- (v1);
\draw[->] (v5) -- (v4);
\draw[->] (v8) -- (v7);
\draw[->] (v1) -- (v4); 
\draw[->] (v2) -- (v5); 
\draw[->] (v4) -- (v7); 
\draw[->] (v5) -- (v8); 
\draw[->] (v4) -- (v2); 
\draw[->] (v7) -- (v5);

\draw[->,dashed] (v12) -- (v11);
\draw[->] (v11)--(v9);
\draw[->] (v9)--(v8);
\draw[->] (v8)--(v6);
\draw[->] (v6)--(v5);
\draw[->] (v5)--(v3);
\draw[->] (v3)--(v2);

\draw[->,dashed] (v11) -- (v10);
\draw[->] (v10)--(v8);

\draw[->] (v3)--(v6);
\draw[->] (v6)--(v9);
\draw[->] (v9)--(v12);

\draw[->] (v8) -- (v11);
\draw[->] (v7) -- (v10);

\end{tikzpicture}
\end{figure}

\begin{figure}
\caption{The ice quiver for $\rsd(\ubi)$, $\ubi=(1,2,3)^{\infty}$.}

\label{fig:quiver-A3-inf}

\begin{tikzpicture}
 [scale=1.5,node distance=48pt,on grid,>={Stealth[length=4pt,round]},bend angle=45, inner sep=0pt]

 \node[unfrozen](v1) at (0,0){1};    
\node[unfrozen](v2) at (-12pt,-24pt){2};
\node[unfrozen](v3) at (-24pt,-48pt){3};
\node[unfrozen](v4) at (-24pt,0pt){4};
\node[unfrozen](v5) at (-36pt,-24pt){5};
\node[unfrozen](v6) at (-48pt,-48pt){6};
\node[unfrozen](v7) at (-48pt,0pt){7};
\node[unfrozen](v8) at (-60pt,-24pt){8};
\node[unfrozen](v9) at (-72pt,-48pt){9};
\node[unfrozen](v10) at (-72pt,0pt){10};
\node[unfrozen](v11) at (-84pt,-24pt){11};
\node[unfrozen](v12) at (-96pt,-48pt){12};
\node(v13) at (-96pt,0pt){$\cdots$};
\node(v14) at (-108pt,-24pt){$\cdots$};
\node(v15) at (-120pt,-48pt){$\cdots$};

\draw[->] (v2) -- (v1);
\draw[->] (v5) -- (v4);
\draw[->] (v8) -- (v7);
\draw[->] (v1) -- (v4); 
\draw[->] (v2) -- (v5); 
\draw[->] (v4) -- (v7); 
\draw[->] (v5) -- (v8); 
\draw[->] (v4) -- (v2); 
\draw[->] (v7) -- (v5);

\draw[->] (v12) -- (v11);
\draw[->] (v11)--(v9);
\draw[->] (v9)--(v8);
\draw[->] (v8)--(v6);
\draw[->] (v6)--(v5);
\draw[->] (v5)--(v3);
\draw[->] (v3)--(v2);

\draw[->] (v11) -- (v10);
\draw[->] (v10)--(v8);

\draw[->] (v3)--(v6);
\draw[->] (v6)--(v9);
\draw[->] (v9)--(v12);

\draw[->] (v8) -- (v11);
\draw[->] (v7) -- (v10);

\draw[->] (v12) -- (v15);
\draw[->] (v11) -- (v14);
\draw[->] (v10) -- (v13);

\draw[->] (v14) -- (v12);
\draw[->] (v13) -- (v11);

\end{tikzpicture}
\end{figure}

\end{eg}

\begin{eg}[{A seed for $\C[SL_3]$ \cite{BerensteinFominZelevinsky05}}]

Choose the Cartan matrix $C=\left(\begin{array}{cc}
2 & -1\\
-1 & 2
\end{array}\right)$ and the signed word $\ubi=(1,-1,2,-2,1,-1)$. The seed $\dsd(\ubi)$
is given Example \ref{eg:SL3-seed} (Figure \ref{fig:SL3}). 

\end{eg}

\begin{eg}[{An infinite seed of type $A_2$ \cite{geiss2024representations}}]

Choose the Cartan matrix $C=\left(\begin{array}{cc}
2 & -1\\
-1 & 2
\end{array}\right)$, $\uc=(1,2)$, and the signed word $\ubi=(\uc^{\infty},1,-1,2,-2,1,-1,(\uc^{\op})^{\infty})$$=(\ldots,1,2,1,2,1,-1,2,-2,1,-1,2,1,2,1,\ldots)$.
The ice quiver $\tQ$ for the seed $\dsd(\ubi)$ is given Figure \ref{fig:quiver-inf-A2}.

\begin{figure}

\caption{The ice quiver for $\dsd(\ubi)$, $\ubi=((1,2)^{\infty},1,-1,2,-2,1,-1,(2,1)^{\infty})$}

\label{fig:quiver-inf-A2}

\begin{tikzpicture}
 [node distance=48pt,on grid,>={Stealth[length=4pt,round]},bend angle=45, inner sep=0pt]
\node[unfrozen] (q-1) at (-4,-0.5) {-1};
\node[unfrozen] (q0) at (-3.5,0.5) {0};
\node[unfrozen] (q1) at (-2.5,-0.5) {1};
\node[unfrozen] (q2) at (-1.5,-0.5) {2};
\node[unfrozen] (q3) at (-0.5,0.5) {3};
\node[unfrozen] (q4) at (0.5,0.5) {4};
\node[unfrozen] (q5) at (1.5,-0.5) {5};
\node[unfrozen] (q6) at (2.5,-0.5) {6};

\node[unfrozen] (q-2) at (-4.5,0.5) {-2};
\node[unfrozen] (q-3) at (-5,-0.5) {-3};
\node (q-4) at (-5.5,0.5) {$\cdots$};
\node(q-5) at (-6,-0.5) {$\cdots$};
\node[unfrozen] (q7) at (3,0.5) {7};
\node[unfrozen] (q8) at (3.5,-0.5) {8};
\node[unfrozen](q9) at (4,0.5) {9};
\node[unfrozen](q10) at (4.5,-0.5) {10};
\node(q11) at (5,0.5) {$\cdots$};
\node(q12) at (5.5,-0.5) {$\cdots$};

\draw[->]  (q-1) edge (q1);
\draw[->]  (q2) edge (q1);
\draw[->]  (q2) edge (q5);
\draw[->]  (q6) edge (q5);
\draw[->]  (q5) edge (q4);
\draw[->]  (q4) edge (q3);
\draw[->]   (q0) edge (q3);
\draw[->]  (q3) edge (q2);
\draw[->]   (q1) edge (q0);
\draw[->]  (q0) edge (q-1);

\draw [->] (q-2) edge (q0);
\draw [->] (q-3) edge (q-1);
\draw [->] (q-1) edge (q-2);
\draw [->] (q-4) edge (q-2);
\draw [->] (q-5) edge (q-3);
\draw [->] (q-3) edge (q-4);
\draw [->] (q-2) edge (q-3);
\draw [->] (q4) edge (q7);
\draw [->] (q7) edge (q6);
\draw [->] (q6) edge (q8);
\draw [->] (q8) edge (q7);
\draw [->] (q7) edge (q9);
\draw [->] (q9) edge (q8);
\draw [->] (q8) edge (q10);
\draw [->] (q9) edge (q11);
\draw [->] (q10) edge (q12);
\draw [->] (q11) edge (q10);
\draw [->] (q10) edge (q9);
\end{tikzpicture}

\end{figure}

\end{eg}

\begin{eg}[{A seed for $\C[SL_2]$}]

Choose the Cartan matrix $C=\left(\begin{array}{c}
2\end{array}\right)$ and the signed word $\ubi=(\bi_{k})_{k\in[0,1]}=(1,-1)$. Then $\dsd(\ubi)$
is a seed for the cluster algebra $\C[SL_{2}]$; see Section \ref{subsec:Cluster-structures-on-G}.
The corresponding ice quiver is given Figure \ref{fig:quiver-SL2}.

\begin{figure}
\caption{The ice quiver for a seed of $\ensuremath{\C[SL_{2}]}$.}

\label{fig:quiver-SL2}

\begin{tikzpicture}
 [node distance=48pt,on grid,>={Stealth[length=4pt,round]},bend angle=45, inner sep=0pt]

\node[frozen] (q1) at (-2,0) {1};
\node[unfrozen] (q0) at (0,0) {0};
\node[frozen] (q-1) at (2,0) {-1};

\draw[->]   (q-1) edge (q0);
\draw[->]   (q1) edge (q0);
\draw[->]   (q1) edge (q0);

\end{tikzpicture}
\end{figure}

\end{eg}

\begin{eg}[{An infinite seed of type $A_1$ \cite{geiss2024representations}}]

Choose the Cartan matrix $C=\left(\begin{array}{c}
2\end{array}\right)$ and the infinite signed word $\ubi=(\ldots,1,1,-1,1,1,\ldots)$ where
$\bi_{1}=-1$. The seed $\dsd(\ubi)$ is given in Example \ref{eg:GHL-A1-inf}
(Figure \ref{fig:A1-GHL}).

\end{eg}

\section{Cluster operations}\label{sec:Cluster-operations}

Following \cite{qin2023analogs}, we introduce some cluster operations
that propagate structures from one cluster algebra to another cluster
algebra whose seeds are closely related. These operations are crucial
in the approach of \cite{qin2023analogs} to Theorem \ref{thm:Lie-tri-basis}
(results for cluster algebras from Lie theory). 

This section is technical. Readers may skip it without loss of continuity
in the subsequent sections.

\subsection{Freezing}

Let $\sd=(I,I_{\ufv},(\sym_{i})_{i\in I},\tB,(x_{i})_{i\in I})$ denote
a seed. Let $(b_{ij})_{i,j\in I}$ be the associated matrix for its
chosen ice quiver. Choose any $F\subset I_{\ufv}$. By freezing the
vertices in $F$, we obtain a new seed $\sd':=\frz_{F}\sd=(I,I_{\ufv}\backslash F,(\sym_{i})_{i\in I},\tB,(x_{i})_{i\in I})$.
The matrix $(b_{ij})_{i,j\in I}$ is kept unchanged. When $\sd$ is
a quantum seed endowed with the compatible Poisson structure $\lambda$,
we endow $\sd'$ with the same compatible Poisson structure $\lambda$. 

We have $\Mcirc(\sd')=\Mcirc(\sd)$ but $N^{\oplus}(\sd')\subset\N^{\oplus}(\sd)$.
Note that the set of tropical points $\Mtrop$ for $\upClAlg(\sd)$
and the set of tropical points $(\Mtrop)'$ for $\upClAlg(\sd')$
are different, although they are both in bijection with $\Mcirc(\sd)=\Mcirc(\sd')\simeq\Z^{I}$.

Assume $p^{*}$ is injective from now on. Note that $\preceq_{\sd}$
implies $\preceq_{\sd'}$, which are both denoted by $\preceq$. We
define the freezing operator $\frz_{F,m}^{\sd}$ to be a linear map
from $\hLP_{\preceq m}(\sd)$ to $\hLP_{\preceq m}(\sd')$ such that
\begin{align*}
\frz_{F,m}^{\sd}(\sum_{n\in N^{\oplus}}b_{n}x^{m+p^{*}(n)}) & :=\sum_{0\neq n\in N^{\oplus},\supp n\cap F=\emptyset}b_{n}x^{m+p^{*}(n)},\ \forall b_{n}\in\kk.
\end{align*}
Note that, if $z$ is $m$-pointed, $\frz_{F,m}^{\sd}(z)$ is $m$-pointed
too.

When the context is clear, we might abbreviate the symbols $\sd$,
$F$, and/or $m$ in $\frz_{F,m}^{\sd}$. For a subset $Z$ of $\hLP(\sd)$
whose elements have unique $\prec$-maximal degrees, we denote $\frz Z:=\{\frz z|z\in Z\}$.
By freezing $F'\in I$, we mean freezing $F:=F'\cap I_{\ufv}$.

Note that $\frz$ is not a linear map from $\hLP(\sd)$ to $\hLP(\sd')$.
Nevertheless, Lemma \ref{lem:freezing-basic-properties} states that
it still has some compatibility with algebra operations.

\begin{lem}[{\cite[Lemma 3.6]{qin2023analogs}}]\label{lem:freezing-basic-properties}

(1) Assume that we have $m'\preceq m$ in $\Mcirc$ such that $m'=m+p^{*}(n)$,
$z\in\hLP_{\preceq m'}(\sd)\subset\hLP_{\preceq m}(\sd)$, then we
have 
\begin{align*}
\frz_{m}z & =\begin{cases}
\frz_{m'}z & \text{if }\supp n\cap F=\emptyset\\
0 & \text{else}
\end{cases}.
\end{align*}

(2) For any $m_{1},m_{2}\in\Mcirc$ and $z_{1}\in\hLP_{\preceq m_{1}}$,
$z_{2}\in\hLP_{\preceq m_{2}}$, we have 
\begin{align*}
\frz_{m_{1}}(z_{1})*\frz_{m_{2}}(z_{2}) & =\frz_{m_{1}+m_{2}}(z_{1}*z_{2}).
\end{align*}

\end{lem}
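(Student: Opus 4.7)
The plan is to verify both parts by direct computation, treating $\frz_{F,m}^{\sd}$ as the linear projection keeping exactly those Laurent terms $x^{m+p^{*}(n)}$ (for $n \in N^{\oplus}$) with $\supp n \cap F = \emptyset$. The whole argument rests on one positivity observation: for any $n_1, n_2 \in N^{\oplus}$ we have $\supp(n_1 + n_2) = \supp n_1 \cup \supp n_2$, because the nonnegative integer coefficients add without cancellation; consequently $\supp(n_1 + n_2) \cap F = \emptyset$ is equivalent to $\supp n_i \cap F = \emptyset$ for both $i = 1, 2$. This is the only nontrivial ingredient; everything else is bookkeeping.

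For part (1), I would let $n_0 \in N^{\oplus}$ be the element (unique by the Injectivity Assumption) such that $m' = m + p^{*}(n_0)$, and expand $z \in \hLP_{\preceq m'}$ as $z = \sum_{n \in N^{\oplus}} c_n x^{m' + p^{*}(n)} = \sum_n c_n x^{m + p^{*}(n_0 + n)}$. Applying the definition of $\frz_m$, the surviving terms are indexed by those $n$ with $\supp(n_0 + n) \cap F = \emptyset$. By the positivity observation, if $\supp n_0 \cap F \neq \emptyset$ then no such $n$ exists, so $\frz_m z = 0$; otherwise, the surviving indices are precisely those with $\supp n \cap F = \emptyset$, matching the set defining $\frz_{m'} z$, with identical coefficients and monomials on both sides.

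For part (2), I would expand $z_i = \sum_{n_i \in N^{\oplus}} b_{n_i}^{(i)} x^{m_i + p^{*}(n_i)}$ and compute
\[
\frz_{m_1}(z_1) * \frz_{m_2}(z_2) = \sum_{\substack{n_1, n_2 \in N^{\oplus} \\ \supp n_i \cap F = \emptyset}} v^{\lambda(m_1 + p^{*} n_1,\, m_2 + p^{*} n_2)} b_{n_1}^{(1)} b_{n_2}^{(2)} x^{m_1 + m_2 + p^{*}(n_1 + n_2)}.
\]
On the other hand, $z_1 * z_2$ has the analogous expansion summed over all pairs $(n_1, n_2) \in N^{\oplus} \times N^{\oplus}$, and $\frz_{m_1 + m_2}$ extracts those summands with $\supp(n_1 + n_2) \cap F = \emptyset$. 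The positivity observation shows the two index sets coincide, so the two sides agree term-by-term.

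I do not anticipate a genuine obstacle; the only technicalities are checking well-definedness of these expansions in the formal completion $\hLP$ and confirming that the $v$-twisted product distributes through the (possibly infinite) sums so that applying $\frz_{m_1+m_2}$ commutes with grouping summands by $(n_1, n_2)$. Both are standard since the filtration by $\prec$ has finite-dimensional graded pieces and the twisted product preserves this filtration, so the formal manipulations are legitimate.
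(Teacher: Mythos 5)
Your proof is correct and uses essentially the same approach one would expect (and that the cited reference uses): a direct computation reducing both parts to the single observation that, because $N^{\oplus}$ consists of nonnegative vectors, $\supp(n_1+n_2)=\supp n_1 \cup \supp n_2$, hence $\supp(n_1+n_2)\cap F=\emptyset$ iff both $\supp n_i\cap F=\emptyset$. In particular you correctly handle the regrouping issue in part (2): different pairs $(n_1,n_2)$ may contribute to the same monomial $x^{m_1+m_2+p^*(n')}$, but whether a pair survives depends only on $n'=n_1+n_2$, so $\frz_{m_1+m_2}$ applied after multiplying agrees term-by-term with multiplying the frozen factors.
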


\begin{prop}[{\cite[Proposition 3.8]{qin2023analogs}}]\label{prop:freeze-to-up}

If $z\in\LP_{\preceq m}(\sd)\cap\upClAlg(\sd)$, then $\frz_{F,m}z\in\upClAlg(\frz_{F}\sd)$.

\end{prop}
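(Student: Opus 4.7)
The goal is to show $\frz_{F,m}^\sd z \in \LP(\sd'')$ for every seed $\sd''$ in the cluster pattern starting at $\frz_F \sd$. First, since freezing does not alter cluster variables and only removes mutation directions, every such $\sd''$ arises as $\frz_F \sd'$ where $\sd' = \seq \sd$ for some mutation sequence $\seq$ supported entirely in $I_\ufv \setminus F$; moreover $\LP(\sd'') = \LP(\sd')$. The statement therefore reduces to proving $\frz_{F,m}^\sd z \in \LP(\sd')$ for all such $\sd'$, which should follow from $z \in \upClAlg(\sd)$ together with a mutation-compatibility of the freezing operation.

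The key ingredient is that the specialization $\rho_F \colon y_k \mapsto 0$ for $k \in F$ commutes with the $Y$-seed mutation at any $k' \in I_\ufv \setminus F$. Writing the expansion $z = \sum_{n \in N^\oplus} c_n x^{m + p^*(n)} = x^m \cdot F_z(y)$ with $F_z \in \kk[N^\oplus]$ a polynomial (since $z \in \LP_{\preceq m}$ is a finite sum), the freezing operator is $\frz_{F,m}^\sd z = x^m \cdot \rho_F(F_z)$. The $Y$-mutation rule $y'_{k'} = y_{k'}^{-1}$, together with $y'_j = y_j \cdot y_{k'}^{[b_{jk'}]_+}(1+y_{k'})^{-b_{jk'}}$ for $j \neq k'$, shows that every $y'_k$ with $k \in F$ retains $y_k$ as a prefactor; hence specializing $y_k \mapsto 0$ before mutation is equivalent to specializing $y'_k \mapsto 0$ after mutation.

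I would then induct on the length of $\seq$. For the inductive step with $\sd_1 = \mu_{k'}\sd'$ and $k' \in I_\ufv \setminus F$, the inductive hypothesis gives $\frz_{F,m}^\sd z \in \LP(\sd')$, and $z \in \LP(\sd_1)$ since $z \in \upClAlg(\sd)$. Lifting $z$ to $\hLP_{\preceq m}(\sd)$, transporting its expansion through $\mu_{k'}^*$ to a $\preceq m_1$-expansion in $\hLP_{\preceq m_1}(\sd_1)$ for the appropriate $m_1$ (arising from tropical mutation), and applying the above compatibility, one identifies $\frz_{F,m}^\sd z$ with the analogous truncation of $z$ performed in $\sd_1$, placing it in $\LP(\sd_1) = \LP(\frz_F \sd_1)$.

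The main obstacle is that setting $y_k = 0$ is not intrinsically a well-defined operation on the Laurent ring $\LP$, since $y_k = x^{p^*(e_k)}$ is a fixed Laurent monomial rather than an independent variable; the operation is only meaningful on the formal pointed completion $\hLP_{\preceq m}$, which depends on the seed. The lift--specialize--descend argument must therefore be carried out in the completions, where one establishes the commutativity between $\rho_F$ and $\mu_{k'}^*$. The finiteness of $F_z$ combined with $z \in \LP(\sd_1)$ then guarantees that the resulting element is a genuine Laurent polynomial rather than a formal series, completing the proof.
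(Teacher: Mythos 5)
Your central observation — that the specialization $y_{k}\mapsto 0$ for $k\in F$ commutes with $Y$-seed mutation at directions $k'\notin F$, because $y'_j$ retains $y_j$ as a prefactor — is indeed the right intuition, and the reduction of $\upClAlg(\frz_F\sd)$ to the rings $\LP(\seq\sd)$ with $\seq$ supported in $I_\ufv\setminus F$ is correct. However, the induction as written has a gap at its crucial step, and the gap is precisely the one you flag at the end without actually closing.

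The problem is the claim that one can ``transport the $\preceq m$-expansion of $z$ to a $\preceq m_1$-expansion in $\hLP_{\preceq m_1}(\sd_1)$'' and then ``identify $\frz_{F,m}^{\sd}z$ with the analogous truncation performed in $\sd_1$.'' The operator $\frz_{F,m}^{\sd}$ is defined only on $\hLP_{\preceq m}(\sd)$, so this identification requires two things that you do not establish: (i) that for the appropriate $m_1$ the element $z$ actually lies in $\LP_{\preceq m_1}(\sd_1)$, and (ii) that under the field identification $\frz_{F,m}^{\sd}z = \frz_{F,m_1}^{\sd_1}z$. This pair of facts is exactly the content of the paper's Proposition~\ref{prop:freeze-compatible-mutation}, but that result is stated under the additional hypothesis that $z$ has a \emph{unique} $\prec$-maximal degree in each of the two seeds involved. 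An element of $\LP_{\preceq m}(\sd)\cap\upClAlg(\sd)$ is merely bounded above by $m$; it may have several incomparable maximal degrees, and then neither the choice of the normalizing $m_1$ nor the equality of the two truncations is automatic. In fact, without a unique leading degree the tropical mutation $\phi_{\sd_1,\sd}(m)$ need not bound the degrees of $z$ in $\sd_1$, since $\phi$ is piecewise linear while Laurent degrees change by the linear $\psi^M$ together with the non-monomial $(1+y_{k'})$-factors. You need an argument that either (a) proves the commutativity statement in this weaker $\preceq m$-bounded generality, or (b) reduces to the pointed case by a decomposition that stays inside $\LP_{\preceq m}(\sd)\cap\upClAlg(\sd)$; neither is done. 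Note also that in the source the compatibility lemma (Lemma 3.9 there) is stated \emph{after} Proposition 3.8, which is another indication that the intended proof does not simply bootstrap from the single-mutation commutativity for pointed elements in the way your induction does. Finally, a smaller bookkeeping issue: in your inductive step $\sd_1=\mu_{k'}\sd'$ is at distance $\ell+1$ from $\sd$, so the single map $\mu_{k'}^*$ relates $\sd_1$ to $\sd'$, not to $\sd$; to compare $\frz^{\sd}_{F,m}z$ with a truncation in $\sd_1$ you would have to carry the compatible degree $m'$ through the whole chain, which means your inductive hypothesis must be strengthened to assert $\frz_{F,m}^{\sd}z = \frz_{F,m'}^{\sd'}z$ for the appropriate $m'$, not merely $\frz_{F,m}^{\sd}z\in\LP(\sd')$.
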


We have the following important result, which states that the freezing
operator $\frz_{F}$ is compatible with mutations at $k\notin F$.

\begin{prop}[{\cite[Lemma 3.9]{qin2023analogs}}]\label{prop:freeze-compatible-mutation}

Assume $k\notin F$ and take $\sd=\mu_{k}\sd'$. Then for any $z\in\LP(\sd)\cap(\mu_{k}^{*})^{-1}\LP(\sd')$
such that $z$ has a unique $\prec_{\sd}$-maximal degree $m$ in
$\LP(\sd)$ and $\mu_{k}^{*}z$ has a unique $\prec_{\sd'}$-maximal
degree $m'$ in $\LP(\sd')$, then we have $\frz_{F,m'}^{\sd'}(\mu_{k}^{*}z)=\mu_{k}^{*}\frz_{F,m}^{\sd}(z)$.

\end{prop}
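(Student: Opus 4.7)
The strategy is to track how the ``$F$-support'' of the Laurent expansion transforms under $\mu_k^*$, using the classical Y-mutation identities. Write $z = x^m F_z(\hat y)$ in the $\sd$-expansion, where $\hat y_l := x^{p^*(e_l)}$ and $F_z = \sum_{n \in N^\oplus(\sd)} b_n \hat y^n$. By definition, $\frz_{F,m}^{\sd}(z) = x^m F_z^{\flat}(\hat y)$ for $F_z^{\flat} := \sum_{\supp n \cap F = \emptyset} b_n \hat y^n$.

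Apply $\mu_k^*$. Using the exchange relation combined with the identity $\prod_i(x'_i)^{-b_{ik}} = \hat y'_k$, one rewrites $\mu_k^*(x_k) = (x'_k)^{-1}B(1 + \hat y'_k)$ with $B = \prod_i(x'_i)^{[b_{ik}]_+}$, and obtains the familiar Y-mutation identities
\begin{align*}
\mu_k^*(\hat y_k) &= (\hat y'_k)^{-1}, \\
\mu_k^*(\hat y_j) &= \hat y'_j\, (\hat y'_k)^{[-b_{kj}]_+}(1 + \hat y'_k)^{b_{kj}} \quad (j \ne k), \\
\mu_k^*(x^m) &= (x')^{\tilde m}(1 + \hat y'_k)^{m_k},
\end{align*}
with $\tilde m := -m_k f'_k + \sum_{i \ne k}(m_i + m_k[b_{ik}]_+)f'_i$. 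The decisive structural feature is that the variable $\hat y'_j$ for $j \ne k$ appears only in $\mu_k^*(\hat y_j)$, and only with exponent one; it never shows up in $\mu_k^*(\hat y_l)$ for $l \ne j,k$, in $\mu_k^*(x^m)$, or in any $(1 + \hat y'_k)$-factor.

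Consequently, expanding $\mu_k^*(z) = \mu_k^*(x^m) \cdot \sum_n b_n \prod_l \mu_k^*(\hat y_l)^{n_l}$ and distributing the $(1 + \hat y'_k)$-binomials, every Laurent term that arises is of the form
\[
b_n\, c_{n,r}\, (x')^{\tilde m}\prod_{j \ne k}(\hat y'_j)^{n_j}(\hat y'_k)^{s(n,r)},
\]
where $c_{n,r}$ comes from a binomial coefficient and $s(n,r) \in \Z$. Rewriting such a term in the canonical form $(x')^{m' + (p^*)'(n')}$ for the unique $n' \in N^\oplus(\sd')$ (using injectivity of $(p^*)'$): by the tropical compatibility of mutation with pointedness we have $m' = \phi_{\sd',\sd}(m)$, and a direct case-by-case check on the sign of $m_k$ yields
\[
\tilde m \;=\; m' \;+\; [-m_k]_+\, (p^*)'(e_k).
\]
Hence $n'_l = n_l$ for every $l \ne k$ and $n'_k = s(n,r) + [-m_k]_+$.

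Since $k \notin F$, the equality $n'_j = n_j$ for each $j \in F$ shows that $\supp n' \cap F = \emptyset$ if and only if $\supp n \cap F = \emptyset$. Collecting in $\mu_k^*(z)$ the Laurent monomials with $\supp n' \cap F = \emptyset$ (grouping together the contributions of different $(n,r)$ that land on the same degree) therefore produces exactly $\mu_k^*(x^m F_z^{\flat}(\hat y)) = \mu_k^*(\frz_{F,m}^{\sd} z)$, which by definition is $\frz_{F,m'}^{\sd'}(\mu_k^* z)$. The main technical obstacle is the identity $\tilde m = m' + [-m_k]_+ (p^*)'(e_k)$; it reduces to the elementary fact $[b]_+ - [-b]_+ = b$ combined with a sign-of-$m_k$ case split, together with the relation $(p^*)'(e_k) = -\col_k \tB$ coming from $b'_{ik} = -b_{ik}$.
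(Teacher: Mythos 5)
Your proof is correct. The heart of the argument — that under the $Y$-mutation identities the exponent of $\hat y'_j$ for each $j \neq k$ in $\mu_k^* z$ equals the exponent $n_j$ of $\hat y_j$ in $z$, so that (since $k \notin F$) the condition $\supp n \cap F = \emptyset$ is preserved monomial-by-monomial — is exactly the right observation, and the bookkeeping with $\tilde m = m' + [-m_k]_+(p^*)'(e_k)$ checks out against the dominance order. Since this paper only cites the statement from the reference rather than reproducing the proof, I cannot line it up step by step against the original, but your computation is sound and is the natural route.

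Two small remarks. First, you invoke ``tropical compatibility'' to conclude $m' = \phi_{\sd',\sd}(m)$; this is indeed a standard fact, but note that you do not actually need the exact value of $m'$: the argument only requires that $\tilde m - m' \in \Z\,(p^*)'(e_k)$, i.e., that $\tilde m$ and $m'$ differ only in the $\hat y'_k$-direction. This is immediate from the fact that the $n = 0$ term $b_0 x^m$ of $z$ is mapped by $\mu_k^*$ into $(x')^{\tilde m}\cdot\kk\llbracket \hat y'_k\rrbracket$, hence lies in the same $\nu = 0$ piece in which the leading monomial $(x')^{m'}$ sits; injectivity of $(p^*)'$ then forces the difference to be a multiple of $(p^*)'(e_k)$. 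Second, you should be a little more explicit that the ``collecting'' step is compatible with the possible cancellation arising when the binomial exponent $m_k + \sum_{l\neq k} n_l b_{kl}$ is negative (so $(1+\hat y'_k)^{\bullet}$ is a formal power series): the point is that cancellation can only occur between terms with the same $(n'_l)_{l\neq k}$, so it happens independently within each $\nu$-piece, and the freezing operator is a $\nu$-piecewise selector; hence the cancellation is unaffected by whether one freezes before or after mutating.
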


Denote $\sd'=\frz_{F}\sd$. Combining Proposition \ref{prop:freeze-to-up}
and Proposition \ref{prop:freeze-compatible-mutation}, we obtain
the following result.

\begin{thm}[{\cite[Theorem 3.10]{qin2023analogs}}]\label{thm:freeze-compatily-pointed}

If $z\in\upClAlg(\sd)$ is compatibly pointed at all seeds in $\Delta_{\sd}^{+}$,
then $\frz_{F}z\in\upClAlg(\sd')$ is compatibly pointed at all seeds
in $\Delta_{\sd'}^{+}$.

\end{thm}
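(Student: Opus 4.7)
My plan is to reduce everything to a seed-by-seed verification along a mutation path and then iterate Proposition \ref{prop:freeze-compatible-mutation}, finishing with Proposition \ref{prop:freeze-to-up}.

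The starting observation is that freezing only enlarges $I_{\fv}$, so any mutation sequence legal for $\sd' = \frz_{F}\sd$ (i.e.\ using only vertices in $I_{\ufv}\setminus F$) is automatically legal for $\sd$. Consequently every seed $\tilde{\sd}'\in\Delta_{\sd'}^{+}$ can be written as $\frz_{F}\tilde{\sd}$ for the seed $\tilde{\sd}\in\Delta_{\sd}^{+}$ produced by the same sequence $\seq=\mu_{k_{r}}\cdots\mu_{k_{1}}$ with all $k_{t}\in I_{\ufv}\setminus F$. Write $\sd_{0}=\sd$, $\sd_{t}=\mu_{k_{t}}\sd_{t-1}$, $\sd_{t}'=\frz_{F}\sd_{t}$, and let $m_{t}\in\Mcirc(\sd_{t})$ be the degree of $z$ in $\LP(\sd_{t})$, which is unique and $\prec_{\sd_{t}}$-maximal by the compatibly-pointed hypothesis.

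Next I would inductively apply Proposition \ref{prop:freeze-compatible-mutation}. At step $t$, pointedness of $z$ at both $\sd_{t-1}$ and $\sd_{t}$ supplies the unique $\prec$-maximal degree required by the hypothesis of that proposition, yielding
\[
\frz_{F,m_{t}}^{\sd_{t}}(z) \;=\; \mu_{k_{t}}^{*}\bigl(\frz_{F,m_{t-1}}^{\sd_{t-1}}(z)\bigr)
\]
after the usual identification of fraction fields. Iterating gives $\seq^{*}\bigl(\frz_{F,m_{0}}^{\sd}(z)\bigr)=\frz_{F,m_{r}}^{\sd_{r}}(z)$ in the common skew field $\cF$. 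The right-hand side is $m_{r}$-pointed in $\LP(\sd_{r})=\LP(\tilde{\sd})=\LP(\tilde{\sd}')$ by the very definition of the freezing operator. Hence $\frz_{F}z$, viewed in $\LP(\tilde{\sd}')$ via the mutation map, is $m_{r}$-pointed there.

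To upgrade this to \emph{compatible} pointedness, I would note that each tropical mutation $\phi_{\sd_{t},\sd_{t-1}}$ depends only on entries $b_{ik}$ with $k\in I_{\ufv}\setminus F$, which are identical in $\sd_{t-1}$ and $\sd_{t-1}'$. Thus $\phi_{\sd_{t}',\sd_{t-1}'}=\phi_{\sd_{t},\sd_{t-1}}$, and the chain of degree identities $m_{t}=\phi_{\sd_{t},\sd_{t-1}}(m_{t-1})$ witnessed for $z$ transports verbatim to $\sd'$. Finally, $\frz_{F}z\in\upClAlg(\sd')$ itself is supplied by Proposition \ref{prop:freeze-to-up} applied to the initial seed. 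Combining this membership with the pointedness statements at each $\tilde{\sd}'\in\Delta_{\sd'}^{+}$ just established gives the theorem.

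The main obstacle I anticipate is the bookkeeping for the inductive step: Proposition \ref{prop:freeze-compatible-mutation} requires that $\mu_{k_{t}}^{*}z$ have a unique $\prec_{\sd_{t}}$-maximal degree in $\LP(\sd_{t})$ and that $z$ itself lie in $\LP(\sd_{t-1})\cap(\mu_{k_{t}}^{*})^{-1}\LP(\sd_{t})$. Both follow from the hypothesis that $z$ is compatibly pointed at every seed of $\Delta_{\sd}^{+}$ (in particular at $\sd_{t-1}$ and $\sd_{t}$, giving pointed expansions in both Laurent polynomial rings). The only other subtlety is that the freezing operator is not globally linear on $\hLP$; Lemma \ref{lem:freezing-basic-properties}(1) handles this, since the compatibly-pointed assumption controls all lower-degree Laurent monomials of $z$ to lie in $\hLP_{\preceq m_{t}}(\sd_{t})$, where $\frz_{F,m_{t}}^{\sd_{t}}$ is well-defined.
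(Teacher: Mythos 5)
Your proposal is correct and follows exactly the route the paper indicates: the paper's proof is literally ``Combining Proposition~\ref{prop:freeze-to-up} and Proposition~\ref{prop:freeze-compatible-mutation}, we obtain the following result,'' and your write-up fills in the expected bookkeeping (iterating Proposition~\ref{prop:freeze-compatible-mutation} along a mutation path, noting the tropical mutation formulas agree on $I_{\ufv}\setminus F$ since freezing does not change the $B$-matrix, and invoking Proposition~\ref{prop:freeze-to-up} for membership in $\upClAlg(\sd')$). Same approach, just spelled out in detail.
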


Theorem \ref{thm:freeze-compatily-pointed} and Theorem \ref{thm:tropical-to-basis}
implies the following.

\begin{thm}[{\cite[Theorem 3.11]{qin2023analogs}}]

If $\base$ is a $\Mtrop$-pointed basis of $\upClAlg(\sd)$ and $\frz_{F}\sd$
is injective reachable, then $\frz_{F}\base$ is a $(\Mtrop)'$-pointed
basis of $\upClAlg(\sd')$.

\end{thm}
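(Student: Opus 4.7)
The plan is to combine Theorem \ref{thm:freeze-compatily-pointed} with Theorem \ref{thm:tropical-to-basis}, once we identify the tropical index sets of $\sd$ and $\sd' := \frz_F \sd$. The Injectivity Assumption passes from $\sd$ to $\sd'$ since freezing does not alter the map $p^*$, and the representative-at-a-seed maps furnish bijections $\Mtrop \simeq \Mcirc(\sd)$ and $(\Mtrop)' \simeq \Mcirc(\sd')$. Because freezing preserves the ambient lattice, $\Mcirc(\sd) = \Mcirc(\sd')$, so we obtain a canonical bijection $\Mtrop \simeq (\Mtrop)'$ sending a tropical point $[m]$ with representative $m \in \Mcirc(\sd)$ to the class $[m]' \in (\Mtrop)'$ of the same $m$.

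First I would check that $\frz_F$ preserves pointedness. Writing
\[
\base_{[m]} = x^m \cdot \Bigl(1 + \sum_{0 \neq n \in N^{\oplus}(\sd)} c_n\, p^*(y^n)\Bigr),
\]
the operator $\frz_{F,m}^{\sd}$ keeps the leading $x^m$ factor and discards precisely the summands with $\supp n \cap F \neq \emptyset$. Since $N^{\oplus}(\sd') = \{n \in N^{\oplus}(\sd) : \supp n \cap F = \emptyset\}$ and $p^*$ is unchanged, the surviving expansion exhibits $\frz_F \base_{[m]}$ as an $m$-pointed element of $\hLP(\sd')$; combined with the identification of index sets above, this shows that $\frz_F \base$ is $(\Mtrop)'$-pointed, with no collisions and no omissions.

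Next, being $[m]$-pointed in $\upClAlg(\sd)$ unpacks to being compatibly pointed at every seed of $\Delta_{\sd}^+$, so Theorem \ref{thm:freeze-compatily-pointed} applies element-by-element to place each $\frz_F \base_{[m]}$ in $\upClAlg(\sd')$ and to make it compatibly pointed at every seed of $\Delta_{\sd'}^+$. Using the hypothesis that $\sd'$ is injective reachable, fix a mutation sequence from $\sd'$ to $\sd'[-1]$; the compatible pointedness just established holds along this sequence in particular, so Theorem \ref{thm:tropical-to-basis} concludes that $\frz_F \base$ is a basis of $\upClAlg(\sd')$. The only genuine care needed in this strategy is the index-set alignment in the second step; the remainder is a bookkeeping invocation of the two cited theorems, and I do not anticipate any substantial obstacle beyond this identification.
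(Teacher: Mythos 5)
Your proof is correct and is essentially the paper's own argument; the paper just states "Theorem \ref{thm:freeze-compatily-pointed} and Theorem \ref{thm:tropical-to-basis} implies the following" and leaves the bookkeeping implicit, which you have filled in accurately (including the identification of $\Mcirc(\sd)=\Mcirc(\sd')$, the preservation of degrees and pointedness under $\frz_F$, the passage from $[m]$-pointedness in $\upClAlg(\sd)$ to compatible pointedness at all seeds, and the final appeal to injective reachability of $\sd'$).
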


We also have the following general result.

\begin{thm}[{\cite[Theorem 3.12]{qin2023analogs}\cite[Theorem 4.3.1]{qin2023freezing}}]\label{thm:freeze-cluster-monomial}

If $z$ is a localized cluster monomial of $\upClAlg(\sd)$, then
$\frz_{F}z$ is a localized cluster monomial of $\upClAlg(\frz_{F}\sd)$.

\end{thm}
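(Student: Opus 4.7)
The plan is to induct on the length of a mutation sequence $\seq$ for which $\sd' := \seq\sd$ carries $z$ as a Laurent monomial in its cluster. The base case $\seq = \emptyset$ is immediate: $z$ is then a Laurent monomial in the initial cluster $(x_i(\sd))_{i\in I}$, which $\frz_F$ fixes, and the same expression is a localized cluster monomial of $\frz_F\sd$. In the inductive step, Lemma \ref{lem:freezing-basic-properties}(2) lets me distribute $\frz_F$ across the factors of $z$ (their degrees add), so the task reduces to realizing each factor $\frz_F x_i(\sd')$ as a localized cluster monomial of a \emph{common} seed $\tilde{\sd}' \in \Delta^+_{\frz_F\sd}$.

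Write $\seq = \mu_k \seq'$ with $\sd'' := \seq'\sd$ and split into two cases. If $k \notin F$, then $\mu_k$ is a legal mutation in $\frz_F\sd$; I would apply Proposition \ref{prop:freeze-compatible-mutation} to commute $\frz_F$ past $\mu_k^*$ and take $\tilde{\sd}' := \mu_k \tilde{\sd}''$, where $\tilde{\sd}''$ is the cluster supplied for $\sd''$ by the induction hypothesis. If instead $k \in F$, the mutation at $k$ is illegal in $\frz_F\sd$ and a different target must be located. Here I would expand $x_k(\sd')$ in $\LP(\sd'')$ via the exchange relation
\[
x_k(\sd') = x_k(\sd'')^{-1}\bigl( x^{[\col_k \tB(\sd'')]_+} + x^{[-\col_k \tB(\sd'')]_+}\bigr),
\]
whose $F$-polynomial in $\LP(\sd'')$ is $1 + y^{e_k}$. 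Since $k \in F$, the operator $\frz_F^{\sd''}$ annihilates $y^{e_k}$, leaving the single Laurent monomial $x_k(\sd'')^{-1}\prod_i x_i(\sd'')^{[-b_{ik}]_+}$. The factor $x_k(\sd'')^{-1}$ is a localized cluster monomial of $\frz_F\sd$ because $k$ is frozen there, while the remaining factors are powers of cluster variables of $\sd''$ to which the induction applies.

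Assembling a single target $\tilde{\sd}'$ that simultaneously accommodates every factor $\frz_F x_i(\sd')$ is the step where the deeper tools are needed. I would invoke Theorem \ref{thm:freeze-compatily-pointed} to guarantee that each $\frz_F x_i(\sd')$ is compatibly pointed at every seed of $\Delta^+_{\frz_F\sd}$, together with Proposition \ref{prop:freeze-to-up} to place $\frz_F z$ inside $\upClAlg(\frz_F\sd)$; Lemma \ref{lem:tropical-to-cluster-monomial} then pins down each $\frz_F x_i(\sd')$ as the unique localized cluster monomial of $\tilde{\sd}'$ sharing its pointed degree, once a common $\tilde{\sd}'$ has been located. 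The main obstacle is exactly this matching in Case B: the induction yields a target cluster separately for each cluster variable of $\sd''$, and verifying that all of them belong to a single seed of $\frz_F\sd$ (rather than to several) is the delicate combinatorial step. Careful bookkeeping of the tropical mutations $\phi_{\tilde{\sd}',\sd}$ and, if necessary, applying Lemma \ref{lem:tropical-to-cluster-monomial} to the full product $\frz_F z$ at once should close the argument.
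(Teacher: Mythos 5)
Your framework --- induct on the length of $\seq$, distribute $\frz_F$ over the factors of $z$ via Lemma~\ref{lem:freezing-basic-properties}(2), and then invoke Theorem~\ref{thm:freeze-compatily-pointed} together with Lemma~\ref{lem:tropical-to-cluster-monomial} to pin down the result once a target seed has been located --- is reasonable in outline, but Case~B contains a genuine error that bookkeeping will not fix. The operator the theorem is about is $\frz_F^{\sd}$, which filters the Laurent expansion of $z$ \emph{in $\LP(\sd)$}; in Case~B you expand $x_k(\sd')$ in $\LP(\sd'')$ and apply $\frz_F^{\sd''}$. Proposition~\ref{prop:freeze-compatible-mutation} identifies these two operators only across mutations at vertices outside $F$, so they can be iterated to equate $\frz_F^{\sd}$ with $\frz_F^{\sd''}$ only when $\seq'$ stays in $I_{\ufv}\setminus F$. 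When $\seq'$ itself crosses $F$, the two operators genuinely differ, and this is exactly the interesting case.

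Concretely, take $\sd$ of type $A_2$ with $b_{12}=1$, $F=\{2\}$, $\seq=(2,1,2)$, so that $\sd''=\mu_1\mu_2\sd$ and $\sd'=\mu_2\sd''$. A direct computation gives $x_2(\sd')=x_1^{-1}(1+x_2)=x^{(-1,1)}(1+y_1)$ in $\LP(\sd)$, whose $F$-polynomial involves only $y_1$, so $\frz_F^{\sd}(x_2(\sd'))=x_2(\sd')$; this is indeed the non-initial cluster variable of $\frz_F\sd$, confirming the theorem. Your Case~B expansion instead produces $\frz_F^{\sd''}(x_2(\sd'))=x_2(\sd'')^{-1}=x_2/(1+x_1)$, a different element which does not even lie in $\LP(\sd)$, let alone $\upClAlg(\frz_F\sd)$. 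The accompanying claim that ``$x_k(\sd'')^{-1}$ is a localized cluster monomial of $\frz_F\sd$ because $k$ is frozen there'' also fails whenever $\seq'$ has already mutated at $k$: here $x_2(\sd'')=x_2^{-1}(1+x_1)$ is not the frozen variable $x_2$ of $\frz_F\sd$, so your factorization does not hand a localized cluster monomial of $\sd''$ to the induction hypothesis. The step you defer as ``delicate bookkeeping'' --- controlling what happens to $\frz_F^{\sd}$ across mutations through $F$, and showing the resulting degree still lies in the $g$-vector fan of $\frz_F\sd$ --- is precisely the content of the theorem and requires an independent argument that your sketch does not supply.
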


In general, it is difficult to know if $z\in\upClAlg$ also belongs
to $\clAlg$. Though this happens if $z$ has the following property.

\begin{defn}[{\cite[Definition 3.14]{qin2023analogs}}]

Assume there is a finite decomposition $z=\sum b_{i}z_{i}$, such
that $b_{i}\in\kk$, $z_{i}$ have distinct degrees in $\LP(\sd)$,
and $z_{i}$ are products of cluster variables and frozen factors,
then we say $z$ has a nice cluster decomposition in $\sd$.

\end{defn}

\begin{lem}\cite[Lemma 3.15]{qin2023analogs}

For any $z\in\LP_{\preceq m}(\sd)$, if $z$ has a nice cluster decomposition
in $\sd$, then $\frz_{m}z\in\clAlg(\frz_{F}\sd)$.

\end{lem}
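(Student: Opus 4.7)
The plan is to use the nice cluster decomposition $z=\sum_i b_i z_i$ and reduce the statement to Theorem \ref{thm:freeze-cluster-monomial} applied factor by factor, combined with the multiplicativity and vanishing properties of freezing provided by Lemma \ref{lem:freezing-basic-properties}. First, I would observe that each $z_i$, being a $*$-product of cluster variables and a frozen factor, is pointed at some degree $m_i$ (up to a scalar from the $v$-twisted product, which I absorb into $b_i$). Since $z\in\LP_{\preceq m}(\sd)$ and the $z_i$ have distinct degrees, each $m_i\preceq m$, so we can write $m_i=m+p^*(n_i)$ with $n_i\in N^{\oplus}$. By the linearity of $\frz_m$ on $\LP_{\preceq m}(\sd)$, we have $\frz_m z=\sum_i b_i\,\frz_m z_i$.

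Next, I would apply Lemma \ref{lem:freezing-basic-properties}(1): the summand $\frz_m z_i$ equals $\frz_{m_i} z_i$ when $\supp n_i\cap F=\emptyset$ and vanishes otherwise. So only the $i$ satisfying $\supp n_i\cap F=\emptyset$ contribute. For these, I would decompose $z_i=v_{i,1}*\cdots*v_{i,r_i}*x^{p_i}$ where each $v_{i,j}$ is a cluster variable and $x^{p_i}$ is a frozen factor, and apply Lemma \ref{lem:freezing-basic-properties}(2) iteratively, since each factor is pointed at its own degree and the pointed degrees sum to $m_i$. This yields
\begin{align*}
\frz_{m_i} z_i & =\frz(v_{i,1})*\cdots*\frz(v_{i,r_i})*\frz(x^{p_i}),
\end{align*}
where I write $\frz(\cdot)$ at the appropriate pointed degree of each factor.

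Finally, I would invoke Theorem \ref{thm:freeze-cluster-monomial}: since each $v_{i,j}$ is a cluster variable, hence a localized cluster monomial of $\upClAlg(\sd)$, its image under freezing is a localized cluster monomial of $\upClAlg(\frz_F\sd)$, and in particular lies in $\clAlg(\frz_F\sd)$. The frozen factor $x^{p_i}$ has $p_i\in\Z^{I_{\fv}}$, and since $I_{\fv}\subset I_{\fv}\sqcup F=(I_{\fv})'$ in $\frz_F\sd$, it remains a frozen factor, hence lies in the coefficient ring $\cRing'=\kk[x_j^{\pm}]_{j\in(I_{\fv})'}$ of $\clAlg(\frz_F\sd)$. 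So each $\frz_{m_i} z_i\in\clAlg(\frz_F\sd)$, and therefore $\frz_m z\in\clAlg(\frz_F\sd)$ as desired.

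The main subtlety is the bookkeeping of pointed degrees when iterating Lemma \ref{lem:freezing-basic-properties}(2): one must check that the degrees of the individual factors $v_{i,j}$ and $x^{p_i}$ add up to $m_i$ so that the multiplicative identity applies, but this follows tautologically from $z_i=v_{i,1}*\cdots*v_{i,r_i}*x^{p_i}$ being $m_i$-pointed. The argument goes through identically at the classical and the quantum level; no further technical obstacle arises.
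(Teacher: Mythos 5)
Your proof is correct, and it uses the natural ingredients — Lemma \ref{lem:freezing-basic-properties} for the selection/multiplicativity of $\frz$, and Theorem \ref{thm:freeze-cluster-monomial} for the image of a single cluster variable — that the paper develops immediately before this lemma, so it is almost certainly the same route as the cited reference (the present paper only cites \cite[Lemma 3.15]{qin2023analogs} and does not spell out a proof of its own).

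One step you pass over a bit quickly is the claim that each $m_i\preceq m$. This is true but deserves a sentence: because the $m_i$ are distinct and each $z_i$ is supported in $\{m_i+p^*(n):n\in N^{\oplus}\}$, the coefficient of $x^{m_i}$ in $z$ at any $\prec$-maximal $m_i$ (among those with $b_i\neq 0$) is exactly $b_i\neq 0$; since $z\in\LP_{\preceq m}(\sd)$ this forces $m_i\preceq m$, and one then subtracts $b_iz_i\in\LP_{\preceq m}(\sd)$ and iterates on the remaining finitely many terms. The rest is as you describe: the vanishing rule in Lemma \ref{lem:freezing-basic-properties}(1) discards the terms with $\supp n_i\cap F\neq\emptyset$, the multiplicativity in Lemma \ref{lem:freezing-basic-properties}(2) reduces each surviving $\frz_{m_i}z_i$ to a $*$-product of the freezings of its factors, Theorem \ref{thm:freeze-cluster-monomial} places each frozen cluster variable in $\clAlg(\frz_F\sd)$, and the frozen factor $x^{p_i}$ lies in the (enlarged) coefficient ring, so the product lands in $\clAlg(\frz_F\sd)$.
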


\begin{cor}[{\cite[Corollary 3.16]{qin2023analogs}}]

If $\base$ is a $\Mtrop$-pointed basis of $\upClAlg(\sd)$, such
that its elements have nice cluster decomposition in $\sd$, and if
$\frz_{F}\sd$ is injective reachable, then $\frz_{F}\base$ is a
$(\Mtrop)'$-pointed basis of $\upClAlg(\frz_{F}\sd)$ and its elements
have nice cluster decomposition. In particular, $\clAlg(\frz_{F}\sd)=\upClAlg(\frz_{F}\sd)$.

\end{cor}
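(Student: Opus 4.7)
The plan is to bootstrap from the preceding theorem (that $\frz_F\base$ is a $(\Mtrop)'$-pointed basis of $\upClAlg(\frz_F\sd)$) together with Theorem \ref{thm:freeze-cluster-monomial} on freezing of cluster monomials and the degree-wise compatibility of $\frz$ with multiplication of compatibly pointed pieces from Lemma \ref{lem:freezing-basic-properties}.

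The basis conclusion is immediate from the preceding theorem, since both of its hypotheses---that $\base$ is a $\Mtrop$-pointed basis and that $\frz_F\sd$ is injective reachable---are given.

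The substantive step is to produce a nice cluster decomposition for each $\frz_F b$. Fix an $m$-pointed element $b \in \base$ and write $b = \sum_i b_i z_i$ for its nice cluster decomposition in $\sd$, where each $z_i$ is a product of cluster variables and frozen factors and the $z_i$ have pairwise distinct degrees $m_i$. Since $b$ is $m$-pointed, every $m_i \preceq m$, so we may write $m_i = m + p^{*}(n_i)$ with $n_i \in N^{\oplus}$. Each $z_i$ lies in $\hLP_{\preceq m_i}(\sd) \subset \hLP_{\preceq m}(\sd)$, so applying Lemma \ref{lem:freezing-basic-properties}(1) term by term yields
\begin{align*}
\frz_{F,m}(b) \;=\; \sum_{i\colon\, \supp n_i \cap F = \emptyset} b_i\, \frz_{F,m_i}(z_i),
\end{align*}
the other summands contributing zero. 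By Lemma \ref{lem:freezing-basic-properties}(2), $\frz_{F,m_i}(z_i)$ equals the $*$-product of $\frz_F$ applied to each individual cluster-variable or frozen-factor factor of $z_i$. Freezing acts trivially on a frozen factor of $\sd$ (which remains a frozen factor of $\frz_F\sd$), while Theorem \ref{thm:freeze-cluster-monomial} sends each cluster-variable factor to a localized cluster monomial of $\upClAlg(\frz_F\sd)$. Hence each $\frz_{F,m_i}(z_i)$ is itself a product of cluster variables and frozen factors of $\frz_F\sd$; since freezing preserves pointedness, the nonzero summands remain $m_i$-pointed and thus have pairwise distinct degrees. This exhibits a nice cluster decomposition of $\frz_{F,m}(b)$ in $\frz_F\sd$.

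For the final assertion, a nice cluster decomposition expresses $\frz_F b$ as a $\kk$-linear combination of localized cluster monomials, so every element of $\frz_F\base$ lies in $\clAlg(\frz_F\sd)$; since $\frz_F\base$ is a basis of $\upClAlg(\frz_F\sd)$, this forces $\upClAlg(\frz_F\sd) \subset \clAlg(\frz_F\sd)$, and the reverse inclusion is the Laurent phenomenon. The main bookkeeping challenge is the correct tracking of degrees after freezing---i.e.\ the fact that terms with $\supp n_i \cap F \neq \emptyset$ must be killed while the leading term ($n_i = 0$) is preserved---which is precisely what Lemma \ref{lem:freezing-basic-properties} is designed to handle.
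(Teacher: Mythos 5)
Your proof is correct, and it follows the route the surrounding development of the paper clearly intends. The basis claim is indeed immediate from the preceding theorem (freezing preserves $\Mtrop$-pointed bases under the injective-reachability hypothesis on $\frz_F\sd$). For the nice cluster decomposition, you correctly combine Lemma \ref{lem:freezing-basic-properties}(1) to discard the terms $z_i$ whose shift $n_i$ meets $F$ in support, Lemma \ref{lem:freezing-basic-properties}(2) to factor $\frz_{F,m_i}$ across the constituent cluster variables and frozen factors, and Theorem \ref{thm:freeze-cluster-monomial} to recognize each frozen image as a localized cluster monomial of $\frz_F\sd$; the remark that freezing preserves pointedness then ensures the surviving terms retain pairwise distinct degrees. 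The final identity $\clAlg(\frz_F\sd)=\upClAlg(\frz_F\sd)$ follows exactly as you say. Essentially you are unpacking the content of the preceding Lemma (\cite[Lemma 3.15]{qin2023analogs}) rather than citing it; either way the argument is the same in substance. One small clarification worth making explicit: a single cluster variable need not be sent by $\frz_F$ to a single cluster variable but rather to a localized cluster monomial, which is itself a product of cluster variables and frozen factors, so the resulting $\frz_{F,m_i}(z_i)$ is still of the required form.
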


\begin{eg}\label{eg:freezing}

Consider the seed $\sd=\rsd(\ubi)$ where $\ubi=(\bi_{k})_{k\in[1,6]}=(1,2)^{3}$.
The ice quiver for $\sd$ is given in Figure \ref{fig:HL-A2-copy3}.
The element $z=x_{2}^{-1}\cdot x_{6}\cdot(1+y_{2}+y_{1}\cdot y_{2}+y_{2}\cdot y_{4}+y_{1}\cdot y_{2}\cdot y_{4}+y_{1}\cdot y_{2}\cdot y_{3}\cdot y_{4})$
is the $(-f_{2}+f_{6})$-pointed common triangular basis element.
It can be identified with the (truncated) $(q,t)$-character of a
Kirillov-Reishetikhin module (Section \ref{subsec:Cluster-algebras-from-q-aff-alg}).

Now freeze $\sd$ at $F=\{4\}$. The ice quiver for $\frz_{F}\sd$
is depicted in Figure \ref{fig:HL-A2-copy3-freeze}. We obtain $\frz_{F}z=x_{2}^{-1}\cdot x_{6}\cdot(1+y_{2}+y_{1}\cdot y_{2})$
be setting $y_{4}=0$. 

\begin{figure}
\caption{The ice quiver for $\rsd(\ubi)$, $\ubi=(1,2)^{3}$}
\label{fig:HL-A2-copy3}

\begin{tikzpicture} [scale=2,node distance=48pt,on grid,>={Stealth[round]},bend angle=45,      pre/.style={<-,shorten <=1pt,>={Stealth[round]},semithick},    post/.style={->,shorten >=1pt,>={Stealth[round]},semithick},  unfrozen/.style= {circle,inner sep=1pt,minimum size=12pt,draw=black!100,fill=red!100},  frozen/.style={rectangle,inner sep=1pt,minimum size=12pt,draw=black!75,fill=cyan!100},   point/.style= {circle,inner sep=1pt,minimum size=5pt,draw=black!100,fill=black!100},   boundary/.style={-,draw=cyan},   internal/.style={-,draw=red},    every label/.style= {black}]
\node[unfrozen](v1) at (0,0){1}; \node[unfrozen](v2) at (-10pt,-20pt){2}; \node[unfrozen](v3) at (-20pt,0pt){3}; \node[unfrozen](v4) at (-30pt,-20pt){4}; \node[frozen](v5) at (-40pt,0pt){5}; \node[frozen](v6) at (-50pt,-20pt){6};
\draw[->,dashed] (v6) -- (v5); \draw[->] (v5) -- (v4);  \draw[->] (v4) -- (v3);  \draw[->] (v3) -- (v2);  \draw[->] (v2) -- (v1);
\draw[->] (v1) -- (v3);  \draw[->] (v3) -- (v5); 
\draw[->] (v2) -- (v4);  \draw[->] (v4) -- (v6); 
\end{tikzpicture}

\end{figure}

\begin{figure}
\caption{The ice quiver for $\frz_{\{4\}}\rsd(\ubi)$, $\ubi=(1,2)^{3}$}
\label{fig:HL-A2-copy3-freeze}

\begin{tikzpicture} [scale=2,node distance=48pt,on grid,>={Stealth[round]},bend angle=45,      pre/.style={<-,shorten <=1pt,>={Stealth[round]},semithick},    post/.style={->,shorten >=1pt,>={Stealth[round]},semithick},  unfrozen/.style= {circle,inner sep=1pt,minimum size=12pt,draw=black!100,fill=red!100},  frozen/.style={rectangle,inner sep=1pt,minimum size=12pt,draw=black!75,fill=cyan!100},   point/.style= {circle,inner sep=1pt,minimum size=5pt,draw=black!100,fill=black!100},   boundary/.style={-,draw=cyan},   internal/.style={-,draw=red},    every label/.style= {black}]
\node[unfrozen](v1) at (0,0){1}; \node[unfrozen](v2) at (-10pt,-20pt){2}; \node[unfrozen](v3) at (-20pt,0pt){3}; \node[frozen](v4) at (-30pt,-20pt){4}; \node[frozen](v5) at (-40pt,0pt){5}; \node[frozen](v6) at (-50pt,-20pt){6};
\draw[->,dashed] (v6) -- (v5); \draw[->] (v5) -- (v4);  \draw[->] (v4) -- (v3);  \draw[->] (v3) -- (v2);  \draw[->] (v2) -- (v1);
\draw[->] (v1) -- (v3);  \draw[->] (v3) -- (v5); 
\draw[->] (v2) -- (v4);  \draw[->] (v4) -- (v6); 
\end{tikzpicture}
\end{figure}

\end{eg}

\subsection{Similarity and base change}

Recall that for a quantum seed $\sd$, we have $\lambda(f_{i},p^{*}e_{k})=-\delta_{ik}\diag_{k}$
for $k\in I_{\ufv}$, $\diag_{k}\in\N_{>0}$.

\begin{defn}[{\cite{Qin12}\cite{qin2017triangular}\cite[Definition 4.1]{qin2023analogs}}]

Let there be two seeds $\sd$ and $\sd'$, which are not necessarily
related by a sequence of mutations, and $\sigma$ a permutation of
$I'_{\ufv}$. We say $\sd$ and $\sd'$ are similar (up to $\sigma$)
if $b'_{\sigma i,\sigma j}=b_{ij}$ and $\sym'_{\sigma i}=\sym_{i}$,
$\forall i,j\in I_{\ufv}$. When $\sd$ and $\sd'$ are quantum seeds,
we further require that there exists some scaling factor $\rho\in\Q_{>0}$
such that $\diag_{\sigma k}'=\rho\diag_{k}$, $\forall k\in I_{\ufv}$.

\end{defn}

\begin{eg}[Principal coefficinet seed]\label{eg:principal-coeff-seed}

For any seed $\sd$, we can construct a new seed $\sd^{\prin}$, called
the seed of principal coefficients, as follows. Set $I_{\ufv}(\sd^{\prin}):=I_{\ufv}$,
$I_{\fv}(\sd^{\prin}):=\{k'|k\in I_{\ufv}\}$, and $\sym_{k'}(\sd^{\prin})=\sym_{k}(\sd^{\prin}):=\sym_{k}(\sd)$,
$\forall k\in I_{\ufv}$. Define $b_{ij}(\sd^{\prin})=b_{ij}(\sd)$
for $i,j\in I_{\ufv}$, and $b_{k'j}=\delta_{kj}$ for $j,k\in I_{\ufv}$.

There is a map $\var:\Mcirc(\sd^{\prin})\rightarrow\Mcirc(\sd)$ such
that
\begin{align}
\var(f_{i}) & =\begin{cases}
f_{i} & \text{if }i\in I_{\ufv}\\
\sum_{j\in I_{\fv}}b_{jk}f_{j} & \text{if }i=k'\in I_{\fv}(\sd^{\prin})
\end{cases}.\label{eq:principal-var-map}
\end{align}
It induces the $\kk$-linear map $\var$ from $\LP(\sd^{\prin})$
to $\LP(\sd)$ such that 
\begin{align*}
\var(x^{m}) & :=x^{\var(m)}.
\end{align*}
Note that $\var(x^{\tB(\sd^{\prin})e_{k}})=x^{\tB e_{k}}$, $\forall k\in I_{\ufv}$.

If $\sd$ is endowed with a compatible Poisson form $\lambda$, we
endow $\sd^{\prin}$ with the compatible Poisson form $\lambda(\sd^{\prin})$
such that 
\begin{align*}
\lambda(\sd^{\prin})(m,m') & :=\lambda(\var(m),\var(m')).
\end{align*}

The (classical or quantum) seed $\sd$ and $\sd'$ are similar with
$\rho=1$.

\end{eg}

In practice, it usually suffices to consider $\rho=1$. This happens
when two similar $\sd$ and $\sd'$ related by a mutation sequence,
for example, when $\sd'=\sd[1]$. Let us assume $\rho=1$ from now
on for simplicity. This assumption will greatly simply treatments
and statements, so that we do not need to extend $\kk$ to a base
ring $\kk'=\kk[v^{\frac{1}{r}}]$ for a large enough $r\in\N$ as
in \cite[Section 4]{qin2023analogs}.

Assume $\sigma$ and $\sigma'$ are similar (classical or quantum)
seeds and $\rho=1$. Replace $\sd'$ by $\sigma^{-1}\sd'$ if necessary,
we can assume $\sigma=1$ without loss of generality. We use $'$
to denote data associated with $\sd'$ as usual.

\begin{defn}[{\cite{Qin12}}]

Let there be given any $m$-pointed element $z=x^{m}\cdot(1+\sum_{0\neq n\in N^{\oplus}}b_{n}\cdot x^{\tB\cdot n})$
in $\hLP$, where $b_{n}\in\kk$. An $m'$-pointed element $z'\in\hLP'$
is said to be similar to $z$ if $\pr_{I_{\ufv}}m'=\pr_{I_{\ufv}}m$
and $z'$ takes the form $z'=(x')^{m'}\cdot(1+\sum_{0\neq n\in N^{\oplus}}b_{n}\cdot(x')^{\tB'\cdot n})$.

\end{defn}

The following correction technique is convenient for propagate algebra
relations from $\hLP$ to $\hLP'$. For $m\in\Mcirc$, let us define
the following set
\begin{align*}
\hV_{m} & :=x^{m}\cdot(\kk[p^{*}(N_{\ufv})]\otimes_{\kk[M^{\oplus}]}\widehat{\kk[M^{\oplus}]})\subset\hLP\\
V_{m} & :=x^{m}\cdot\kk[p^{*}(N_{\ufv})]\subset\LP
\end{align*}
For $m'\in(\Mcirc)'$, $\hV'_{m'}\subset\hLP'$ and $V'_{m'}\subset\LP'$
are defined similarly. Introduce the $\kk$-linear isomorphism $\var:=\var_{m',m}:\hV_{m}\rightarrow\hV'_{m'}$
such that 
\begin{align*}
\var(x^{m+\tB n}) & :=(x')^{m'+\tB'n}
\end{align*}
Then $\var$ sends a pointed element to a pointed element with the
same $F$-function.

\begin{thm}[{Correction technique \cite{Qin12}\cite{qin2017triangular}\cite[Theorem 4.6]{qin2023analogs}}]\label{thm:correction-technique}

Let there be given $m_{s}$-pointed elements $z_{s}\in\hLP$ and $m'_{s}$-pointed
elements $z'_{s}\in\hLP$ such that $z_{s}$ and $z'_{s}$ are similar,
where $s\in[1,r]$ for some $r\in\N$. We allow repetition in theses
degrees. Take $z\in\hV_{m}$ and $z'\in\hV'_{m'}$ such that $\pr_{I_{\ufv}}m=\pr_{I'_{\ufv}}m'$
and $z'=\var_{m',m}z$.
\begin{enumerate}
\item If $z=[z_{1}*\cdots*z_{r}]^{\sd}$, then $z'=p'\cdot[z'_{1}*\cdots*z'_{r}]^{\sd'}$
for the frozen factor $p'\in\cRing'$, such that $\deg^{\sd'}p'=m'-\sum_{s=1}^{r}m'_{s}$.
\item If $z=\sum_{s=1}^{r}b_{s}z_{s}$ for some $b_{s}\in\kk$, then $z=\sum_{J}b_{s}z_{s}$
for $J=\{s|b_{s}\neq0,\ z_{s}\in\hV_{m}\}$. Moreover, $z'=\sum_{s\in J}\var_{m',m}(b_{s}z_{s})=\sum_{s\in J}b_{s}p'_{s}z'_{s}$,
for the frozen factors $p'_{s}$ such that $\deg^{\sd'}p'_{s}=\deg^{\sd'}\var(z_{s})-m'_{s}$.
\end{enumerate}
\end{thm}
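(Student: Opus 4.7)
My plan is to show that the $\kk$-linear isomorphism $\var = \var_{m',m}$ transports the normalized product on the $\sd$-side to the normalized product on the $\sd'$-side up to a frozen Laurent monomial, and then to deduce Claim (2) from $\kk$-linearity of $\var$.

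For Claim (1), I would first expand the twisted product $z_1 * \cdots * z_r$ term by term: a typical contribution $x^{m_1+p^*(n_1)} * \cdots * x^{m_r+p^*(n_r)}$ picks up the twist $v^{\sum_{s<t}\lambda(m_s+p^*(n_s),\,m_t+p^*(n_t))}$. Stripping off the leading scalar $v^{\sum_{s<t}\lambda(m_s,m_t)}$ and the leading monomial $x^{\sum_s m_s}$, both of which are absorbed by the normalization $[-]^\sd$, the residual twist exponent decomposes into sums of $\lambda(m_s,p^*(n_t))$, $\lambda(p^*(n_s),m_t)$, and $\lambda(p^*(n_s),p^*(n_t))$ for $s<t$. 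By the compatibility identity $\lambda(f_i, p^*(e_k)) = -\delta_{ik}\diag_k$ valid for $k \in I_\ufv$, each evaluation collapses to an expression involving only the unfrozen components of $m_s$, only the unfrozen columns of $\tB$, and the positive constants $\diag_k$.

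Since $\sd$ and $\sd'$ are similar with $\rho = 1$, since $\pr_{I_\ufv} m_s = \pr_{I_\ufv} m'_s$, and since $z'_s$ shares its F-function with $z_s$, the parallel expansion on the $\sd'$-side produces exactly the same F-function coefficients. Hence $z = [z_1 * \cdots * z_r]^\sd$ and $[z'_1 * \cdots * z'_r]^{\sd'}$ have identical F-functions; $z' = \var(z)$ inherits this F-function by the very definition of $\var$. Therefore $z'$ and $[z'_1 * \cdots * z'_r]^{\sd'}$ are both pointed elements with the same F-function, so their ratio is the Laurent monomial $p' = (x')^{m' - \sum_s m'_s}$. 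The hypothesis $\pr_{I_\ufv} m = \pr_{I_\ufv} m'$, together with $\pr_{I_\ufv}(\sum_s m_s) = \pr_{I_\ufv} m$, forces $\pr_{I_\ufv}(m' - \sum_s m'_s) = 0$, so $p' \in \cRing'$ is indeed a frozen factor of the claimed degree.

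For Claim (2), I would first reduce to the index set $J$: if some summand $z_s$ with $b_s \neq 0$ lay outside $\hV_m$, then $m_s \notin m + p^*(N^\oplus)$, and since the $m_s$ are pairwise distinct pointed leading degrees, the term $b_s x^{m_s}$ could not be cancelled by the other summands, contradicting $z \in \hV_m$. After this reduction, $\kk$-linearity of $\var$ gives $z' = \sum_{s\in J} b_s \var(z_s)$; for each such $s$, $\var(z_s)$ and $z'_s$ are pointed with F-function equal to that of $z_s$, and hence differ by a Laurent monomial $p'_s$ whose degree $\deg^{\sd'}\var(z_s) - m'_s$ is supported on $I_\fv$ by the same unfrozen-support calculation. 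The main obstacle is the careful bookkeeping of the first step, where one must verify that every $v$-twist surviving the normalization depends only on the unfrozen part of the similar data; once this is checked, the rest is a direct translation of F-functions via $\var$.
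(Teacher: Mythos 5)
The paper does not prove this theorem itself; it is quoted from \cite{Qin12}, \cite{qin2017triangular}, and \cite[Theorem 4.6]{qin2023analogs}. Your reconstruction of the argument follows the same strategy used in those references: reduce everything to the $F$-functions, observe that the $v$-twist exponents surviving normalization depend only on unfrozen data (via $\lambda(f_i,p^*e_k)=-\delta_{ik}\sym_k$ for $k\in I_\ufv$, which shows that $\lambda(m_s,p^*n_t)$, $\lambda(p^*n_s,m_t)$, and $\lambda(p^*n_s,p^*n_t)$ are functions of $\pr_{I_\ufv}m_s$, $n_t$, $B$, $\diag$), and conclude that similar inputs produce normalized products with identical $F$-functions. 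That part is sound and is the heart of the correction technique.

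There are, however, two points where your execution has genuine issues. First, in Claim (2) you argue by contradiction that every surviving summand must lie in $\hV_m$ ``since the $m_s$ are pairwise distinct pointed leading degrees.'' But the statement explicitly says ``We allow repetition in these degrees,'' so you cannot invoke distinctness. The correct argument does not need a contradiction at all: the torus algebra decomposes as a direct sum of $\kk$-subspaces indexed by cosets of $p^*(N_\ufv)$ in $\Mcirc$, and $\hV_m$ is precisely the coset piece containing $x^m$. The summands $b_sz_s$ with $m_s\notin m+p^*(N_\ufv)$ land in other coset pieces, so since $z\in\hV_m$ they must sum to zero regardless of repetitions; this yields $z=\sum_{s\in J}b_sz_s$ directly, and the conclusion $z'\in\hV'_{m'}$ then needs no cancellation because those summands were already discarded. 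Second, you write $m+p^*(N^\oplus)$ where the membership $z_s\in\hV_m$ actually corresponds to $m_s\in m+p^*(N_\ufv)$ (the Laurent lattice, not the positive cone) --- this is a notational slip but it changes which summands survive.

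Finally, a small bookkeeping caveat: you assert $\pr_{I_\ufv}(\sum_sm_s)=\pr_{I_\ufv}m$ without derivation. From $z\in\hV_m$ and $z$ pointed at $\sum_sm_s$ one only gets $\sum_sm_s-m\in p^*(N_\ufv)$, which pins this identity down only when $m=\sum_sm_s$ (the intended non-degenerate situation). This is implicit in the statement's conclusion that $\deg^{\sd'}p'=m'-\sum_sm'_s$ is a frozen degree, but you should say why this holds rather than asserting it, since it is exactly what makes $p'$ land in $\cRing'$.
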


\begin{thm}[{\cite{FominZelevinsky07}\cite{Tran09}\cite{gross2018canonical}}]\label{thm:similar-cluster-var}

Let $\seq$ be any mutation sequence. For any $i\in I$, the cluster
variable $x_{i}(\seq\sd)\in\LP(\sd)$ and the cluster variable $x_{i}(\seq\sd')\in\LP(\sd')$
are similar.

\end{thm}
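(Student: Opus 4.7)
The plan is to prove this by induction on the length of the mutation sequence $\seq$, with the inductive step handled by the correction technique of Theorem~\ref{thm:correction-technique}. Under the similarity assumption with $\sigma = \Id$ and $\rho = 1$, the principal $B$-matrices satisfy $B = B'$, and since the mutation formula for $\tB$ restricts to the usual mutation formula on $B$, a sub-induction shows that $B(\seq_0\sd) = B(\seq_0\sd')$ for every prefix $\seq_0$ of $\seq$. Consequently every intermediate pair $\seq_0\sd$, $\seq_0\sd'$ is again similar with $\sigma = \Id$, $\rho = 1$, and the exchange exponents $[b_{jk}(\seq_0\sd)]_+$ and $[-b_{jk}(\seq_0\sd)]_+$ at each mutation step agree across the two seeds.

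The base case $\seq = \emptyset$ is trivial: $x_i(\sd) = x^{f_i}$ has $F$-polynomial $1$, and similarly in $\sd'$. For the inductive step, write $\seq = \mu_k \seq_0$, set $\hat\sd = \seq_0\sd$ and $\hat\sd' = \seq_0\sd'$, and denote $\hat b_{jk} := b_{jk}(\hat\sd) = b_{jk}(\hat\sd')$. Assume similarity of $x_i(\hat\sd)$ and $x_i(\hat\sd')$ for every $i$. Cluster variables at $i \neq k$ are unchanged by $\mu_k$, so similarity persists. For $i = k$, the exchange relation \eqref{eq:exchange-relation} in $\hat\sd$ reads
\[
x_k(\seq\sd) \;=\; x_k(\hat\sd)^{-1}\!\left( \prod_{j\in I} x_j(\hat\sd)^{[\hat b_{jk}]_+} + \prod_{j\in I} x_j(\hat\sd)^{[-\hat b_{jk}]_+} \right),
\]
and an analogous expression holds in $\hat\sd'$ with the same unfrozen exponents. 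Each $x_j(\hat\sd)$ is similar to $x_j(\hat\sd')$ by the inductive hypothesis, so Theorem~\ref{thm:correction-technique}(1) implies that each monomial product is similar on the two sides up to a frozen factor. Theorem~\ref{thm:correction-technique}(2) then compares the sum of the two products, provided the summands have matching unfrozen degrees on each side --- which they do, since the unfrozen degree is determined entirely by $[\hat b_{jk}]_+$ and $[-\hat b_{jk}]_+$. Dividing by the similar pair $x_k(\hat\sd)$, $x_k(\hat\sd')$ closes the induction.

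The main obstacle is the bookkeeping of the frozen factors emitted by Theorem~\ref{thm:correction-technique}: one must verify that the frozen discrepancies accompanying the two exchange monomials in $\hat\sd'$ match, so that the binomial $F$-polynomial $1 + y_k$ of the exchange relation transports unchanged rather than acquiring spurious frozen coefficients. This reduces to the identity that the two products $\prod_j x_j(\hat\sd)^{[\hat b_{jk}]_+}$ and $\prod_j x_j(\hat\sd)^{[-\hat b_{jk}]_+}$ differ in unfrozen degree exactly by $y_k = x^{\col_k \tB(\hat\sd)}$, which is built into the definition of the exchange relation and crucially uses $\tB(\hat\sd)|_{I_\ufv} = \tB(\hat\sd')|_{I_\ufv}$. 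Equivalently, one may package the entire argument via the Fomin--Zelevinsky separation formula and its quantum analogue due to Tran cited in the statement, which exhibit the unfrozen part of the $g$-vector and the full $F$-polynomial as explicit functions of the principal data and $\seq$ alone.
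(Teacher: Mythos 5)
The paper does not actually supply a proof of Theorem~\ref{thm:similar-cluster-var}: the bracketed citations \cite{FominZelevinsky07}\cite{Tran09}\cite{gross2018canonical} \emph{are} the proof. The intended argument is simply that, by the (quantum) Fomin--Zelevinsky separation formula together with sign-coherence, the cluster variable $x_i(\seq\sd)\in\LP(\sd)$ equals $x^{g_i}F_i(\hat y)$ where the $F$-polynomial $F_i$ and the projection $\pr_{I_\ufv}g_i$ depend only on the principal part $B$ and on $\seq$; since similar seeds share their principal part, the defining conditions of similarity (equal $\pr_{I_\ufv}$-degree, equal $F$-function) hold immediately. Your final paragraph correctly identifies this shortcut, and if you had led with it the review would end here.

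Your primary route --- induction on $|\seq|$, driving the step with the exchange relation and Theorem~\ref{thm:correction-technique} --- is a genuinely different and interesting approach, but as written it has two gaps. First, the correction technique is only applicable to elements that are pointed with respect to the \emph{initial} seed $\sd$; you apply it to the two exchange monomials $M_\pm=[\prod_j x_j(\hat\sd)^{[\pm\hat b_{jk}]_+}]^{\sd}$ inside $\LP(\sd)$. Their pointedness in $\LP(\sd)$ (not in $\LP(\hat\sd)$, where it is trivial) is exactly the sign-coherence property of cluster monomials, which is one of the deep outputs of \cite{gross2018canonical} rather than something your induction supplies for free: the fact that $x_k(\seq\sd)*x_k(\hat\sd)=v^\alpha M_++v^\beta M_-$ has $M_-$ as the unique $\prec_\sd$-leading term requires the non-degeneracy assertion that the $c$-vector direction $p^*e_k(\hat\sd)$ has a sign, viewed in $\Mcirc(\sd)$. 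You should state this input explicitly.

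Second, your handling of the frozen-factor matching is not actually an argument. The identity you invoke --- that the two exchange monomials differ multiplicatively by $\hat y_k=\hat x^{\col_k\tB(\hat\sd)}$ --- lives in $\LP(\hat\sd)$. Once you push it into $\LP(\sd)$ via the mutation maps, the discrepancy becomes a Laurent monomial in the $y$-variables of $\sd$ (with a frozen part that can differ between $\sd$ and $\sd'$), and the transformation law for $\hat y$-variables under mutation is, again, part of the separation-formula package. Concretely: Theorem~\ref{thm:correction-technique}(2) produces a decomposition $\var(z)=b_+p'_+M'_++b_-p'_-M'_-$ with \emph{determined} frozen factors $p'_\pm$; you need $p'_+/p'_-$ to equal the precise frozen monomial dictated by the $\sd'$-exchange relation, and verifying this requires tracking how the frozen parts of $a_\pm$ and $a'_\pm$ differ across $\sd,\sd'$ and relating that difference to $\deg^{\sd'}\var(M_\pm)-\deg^{\sd'}M'_\pm$. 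This can be done, but the computation is essentially a re-derivation of the separation formula and of the independence of $F$-polynomials from the coefficient part. It is worth being honest that the inductive route, when filled in, is longer and contains the cited references as hidden ingredients, while the direct invocation of \cite{FominZelevinsky07}\cite{Tran09}\cite{gross2018canonical} is one line.
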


Let $\alg$ denote the ordinary cluster algebra $\clAlg$ or the upper
cluster algebra $\upClAlg$. The following statements are implied
by \cite[Lemma 4.7]{qin2023analogs}, Theorem \ref{thm:correction-technique},
and Theorem \ref{thm:similar-cluster-var}.

\begin{prop}

Let $z\in\LP$ and $z'\in\LP'$ be similar pointed elements. 

(1) If $z\in\alg$, then if $z'\in\alg'$.

(2) If $z$ has a nice cluster decomposition in $\sd$, then $z'$
has a nice cluster decomposition in $\sd'$.

\end{prop}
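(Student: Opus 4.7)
The plan is to deduce the proposition from \cite[Lemma 4.7]{qin2023analogs} combined with Theorems \ref{thm:correction-technique} and \ref{thm:similar-cluster-var}. The core mechanism is that similarity of seeds is preserved under simultaneous mutation of both $\sd$ and $\sd'$: since mutation rules for $B$-matrices (and for compatible Poisson forms) depend only on the shared data, for any mutation sequence $\seq$ the pair $(\seq\sd,\seq\sd')$ is again similar (with the same permutation and scaling factor $\rho=1$). By Theorem \ref{thm:similar-cluster-var}, for every $i\in I$ the cluster variable $x_i(\seq\sd)\in\LP(\sd)$ is similar to $x_i(\seq\sd')\in\LP(\sd')$. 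This gives a matched family of similar pointed cluster variables, and the correction technique transports algebraic expressions from $\sd$ to $\sd'$ up to frozen-factor corrections.

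For part (1) with $\alg=\upClAlg$, I would fix an arbitrary mutation sequence $\seq$ and, using $z\in\LP(\seq\sd)$, expand $z$ as a finite Laurent polynomial in the cluster of $\seq\sd$. Each monomial $x(\seq\sd)^\mu$ is a normalized product of cluster variables and inverses of frozen variables, hence a pointed element of $\LP(\sd)$. Applying Theorem \ref{thm:correction-technique}(1) at the similar pair $(\seq\sd,\seq\sd')$ to each such product, and then Theorem \ref{thm:correction-technique}(2) to reassemble the sum, shows that $z'$ coincides, up to frozen factors, with the analogous Laurent polynomial in the cluster of $\seq\sd'$; since frozen factors are themselves elements of $\LP(\seq\sd')$, this gives $z'\in\LP(\seq\sd')$. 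Varying $\seq$ yields $z'\in\upClAlg'$. For $\alg=\clAlg$, one works instead with a single global expression of $z$ as a finite $\cRing$-linear combination of products of cluster variables of seeds in $\Delta^+_{\sd}$, and applies the correction technique in exactly the same way, noting that products of cluster variables transport to products of cluster variables of $\sd'$.

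For part (2), suppose $z=\sum_i b_i z_i$ is a nice cluster decomposition in $\sd$, with each $z_i$ a product of cluster variables and a frozen factor and the $z_i$ having pairwise distinct degrees in $\LP(\sd)$. By Theorem \ref{thm:correction-technique}(1), each $z_i$ has a similar counterpart $\tilde z_i$ in $\LP(\sd')$ that is again a product of the correspondingly matched cluster variables (supplied by Theorem \ref{thm:similar-cluster-var}) and a frozen factor, possibly differing from $\var(z_i)$ by an extra frozen factor $p'_i\in\cRing'$. Theorem \ref{thm:correction-technique}(2) then gives $z'=\sum_{i\in J}b_i\, p'_i \tilde z_i$; absorbing each $p'_i$ into the frozen factor of $\tilde z_i$ produces a nice cluster decomposition of $z'$ in $\sd'$.

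The main obstacle I anticipate is not conceptual but bookkeeping: every invocation of the correction technique introduces a frozen-factor discrepancy, and one must check that these discrepancies interact consistently with the product and sum operations, and in particular that the pointedness degrees used in applying Theorem \ref{thm:correction-technique}(2) (requiring $z_s\in\hV_m$) are correctly tracked along the way. This is precisely the content packaged in \cite[Lemma 4.7]{qin2023analogs}, which I would invoke rather than re-derive, so the proof reduces to a clean two-line application of that lemma and the correction technique in each of the three settings above.
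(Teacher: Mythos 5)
Your proposal is correct and follows the paper's own approach: the paper states this proposition as an immediate consequence of \cite[Lemma 4.7]{qin2023analogs} together with Theorems \ref{thm:correction-technique} and \ref{thm:similar-cluster-var}, which are exactly the three tools you invoke, with Lemma 4.7 supplying the frozen-factor and pointedness bookkeeping that you correctly defer to it. (One small imprecision: a Laurent monomial $x(\seq\sd)^{\mu}$ in $\LP(\seq\sd)$ may involve negative powers of unfrozen cluster variables, so it is a pointed element of $\hLP(\seq\sd)$ rather than a product of cluster variables and inverses of frozens in $\LP(\sd)$; this does not affect the correction-technique argument.)
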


Now, let $\cZ$ be a subset of $\alg$ such that it a $\kk$-basis
\footnote{\cite[Section 4.2]{qin2023analogs} does not assume $\cZ$ to be a
basis. The corresponding statements change slightly.} and the following conditions are satisfied:
\begin{enumerate}
\item[(C1)]  The elements of $\cZ$ are pointed.
\item[(C2)]  $\cZ$ is closed under the commutative multiplication by frozen
factors:
\begin{align*}
x_{j}^{\pm}\cdot\cZ & \subset\cZ,\quad\forall j\in I_{\fv}.
\end{align*}
\end{enumerate}
Let $\cZ'$ denote the set of the elements in $\LP'$ that are similar
to elements of $\cZ$. Then $\cZ'$ is also closed under the commutative
multiplication by frozen factors.

\begin{prop}[{\cite[Lemmas 4.9, 4.10]{qin2023analogs}}]

The set $\cZ'$ is a $\kk$-basis of $\alg'$.

\end{prop}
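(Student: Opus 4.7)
The goal is to verify the three defining conditions of a $\kk$-basis in turn: $\cZ' \subset \alg'$, $\kk$-linear independence of $\cZ'$, and that $\cZ'$ spans $\alg'$.

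Containment and linear independence are the straightforward parts. For containment, each $z' \in \cZ'$ is by construction similar to some $z \in \cZ \subset \alg$, so part (1) of the preceding Proposition gives $z' \in \alg'$. For linear independence, note that if $z_1, z_2 \in \cZ$ differ by a frozen-factor shift in $\Z^{I_\fv}$ then their $F$-functions coincide and they produce the same family of similar elements in $\LP'$; condition (C2) on both $\cZ$ and $\cZ'$ therefore parametrizes $\cZ'$ by pairs consisting of an equivalence class of $\cZ$ modulo the $\Z^{I_\fv}$-shift action together with a frozen degree in $\Z^{I'_\fv}$. Distinct elements of $\cZ'$ thus carry pairwise distinct leading degrees in $(\Mcirc)'$, and pointed elements with distinct leading degrees are automatically $\kk$-linearly independent in $\LP'$.

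The main step is spanning. My strategy is a leading-degree descent powered by the correction technique. Given $z' \in \alg' \subset \LP'$ (Laurent phenomenon), which has finite Laurent support, I pick a $\prec_{\sd'}$-maximal monomial $x^{m'}$ in the support with coefficient $\lambda \in \kk$, and I aim to produce an $m'$-pointed element $\tilde z' \in \cZ'$ so that $z' - \lambda \tilde z' \in \alg'$ has strictly smaller support: the monomial $x^{m'}$ is canceled, while any newly introduced terms are $\prec_{\sd'} m'$. Iterating this standard leading-degree induction on the finite support exhausts $z'$ and exhibits it as a $\kk$-linear combination of $\cZ'$. The existence of $\tilde z' \in \cZ'$ at the prescribed degree $m'$ follows from the definition of $\cZ'$ once the unfrozen part $\pr_{I'_\ufv} m'$ is known to lie in $\pr_{I_\ufv}\Theta$, where $\Theta \subset \Mcirc$ is the support of $\cZ$. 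The $\cZ$-decomposition of the corresponding $m$-pointed preimage on the $\alg$-side transfers to a $\kk$-linear expression for $\tilde z'$ via Theorem \ref{thm:correction-technique}(2), with frozen-factor corrections absorbed by (C2) for $\cZ'$.

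The main obstacle is the certification step: showing that the leading unfrozen degrees of elements of $\alg'$ all lie in $\pr_{I_\ufv}\Theta$. This is where Theorem \ref{thm:similar-cluster-var} becomes essential: cluster variables at corresponding seeds of $\sd$ and $\sd'$ are pointwise similar, so products of cluster variables at $\sd'$ are, up to frozen corrections from Theorem \ref{thm:correction-technique}(1), similar to the corresponding products at $\sd$. Since $\alg$ has all its leading unfrozen degrees in $\pr_{I_\ufv}\Theta$ by virtue of $\cZ$ being a basis, the same constraint propagates to $\alg'$; for $\alg = \clAlg$ this follows directly from generation by cluster monomials, and for $\alg = \upClAlg$ it uses the intersection characterization $\upClAlg' = \bigcap_{\sd''} \LP(\sd'')$ together with the compatibility of similarity with mutations.
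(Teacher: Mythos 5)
Your containment step and the parametrization of $\cZ'$ are fine (though note that your linear-independence argument implicitly requires $\cZ$ to have exactly one element per degree, i.e.\ to be $\Theta$-pointed in the paper's sense; this is the intended reading of (C1) together with (C2), since if $z_1,z_2\in\cZ$ share the same unfrozen degree component then $x^{\deg z_1-\deg z_2}\cdot z_2\in\cZ$ has the same degree as $z_1$, so $\Theta$-pointedness forces $z_1 = x^{\deg z_1-\deg z_2}\cdot z_2$).

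The spanning step, however, has a genuine gap. Your leading-degree descent is not shown to terminate. Subtracting $\lambda\tilde z'$ cancels the monomial $x^{m'}$ but introduces the entire lower-order tail of $\tilde z'$; since the dominance order $\prec_{\sd'}$ is not well-founded, there is no a priori bound on the number of steps. The usual argument that ``the descent terminates because $\cZ'$ is a basis with a prescribed leading-degree structure'' is exactly what you are trying to prove, so invoking it here would be circular. The certification step has the same problem for $\alg'=\upClAlg'$: the claim that every $\prec_{\sd'}$-maximal degree in $\Supp(z')$ has unfrozen projection in $\pr_{I_\ufv}\Theta$ is equivalent (via the standard argument for $\Theta$-pointed bases) to $\cZ'$ already being a spanning set, and the ``intersection characterization'' of $\upClAlg'$ does not by itself imply it.

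The cleaner route, and the one naturally supported by the tools you already cite, avoids descent entirely: do not attack an arbitrary $z'\in\alg'$ directly but instead transport finite decompositions along the similarity. For $\alg=\clAlg$ this is especially clean: $\clAlg'$ is $\kk$-spanned by normalized products $[P']^{\sd'}$ of (localized) cluster monomials drawn from the mutation class of $\sd'$. Each such $P'$ has a mirror product $P$ in $\clAlg$ built from the corresponding seeds of $\sd$; by Theorem \ref{thm:similar-cluster-var} the factors are pairwise similar, so by Theorem \ref{thm:correction-technique}(1) the normalizations $[P]^{\sd}$ and $[P']^{\sd'}$ are similar. Expand $[P]^{\sd}=\sum_s b_s z_s$ with $z_s\in\cZ$ (a \emph{finite} sum, since $\cZ$ is a basis); the $z_s$ lie in $\hV_{m_s}$ because their $F$-functions are supported on $N^\oplus$, so Theorem \ref{thm:correction-technique}(2) gives $[P']^{\sd'}=\sum_{s\in J}b_s\,p'_s\,z'_s$ with $p'_s$ frozen and $z'_s\in\cZ'$. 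Condition (C2) puts $p'_s z'_s\in\cZ'$, so $[P']^{\sd'}$ lies in the $\kk$-span of $\cZ'$. No termination issue arises because the decomposition is inherited, already finite, from $\cZ$ on the $\sd$-side. The upper cluster algebra case requires a separate argument (one cannot reduce to products of cluster monomials), and your sketch does not close that gap.
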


Finally, let us give a diagrammatic description of the base change.
Assume that we have a $\kk$-algebra homomorphism $\var:\LP\rightarrow\LP'$
such that $\var(x_{k})=p_{k}\cdot x'_{k}$, $\var(x^{\col_{k}\tB})=(x')^{\col_{k}\tB'}$,
$\forall k\in I_{\ufv}$, and $\var$ sends any frozen factor to a
frozen factor. Such a map $\var$ is called a variation map following
\cite{kimura2022twist} (or a quasi homomorphism in \cite{fraser2016quasi}
in the classical case).

Note that $\LP$ is a free $\cRing$-bimodule, where $\cRing=\kk[x_{j}^{\pm}]_{j\in I_{\fv}}$.
Similarly, $\LP'$ is a free $\cRing'$-bimodule. Using the restriction
map $\var:\cR\rightarrow\cR'$, we endow $\LP'$ with a free $\cRing$-bimodule
structure. Then we have the following $\cRing'$-module isomorphism
\begin{align*}
\varphi: & \cRing'\otimes_{\cRing}\LP\simeq\LP'
\end{align*}
such that $\varphi(p'\otimes x^{m}):=p'*\var(x^{m})$, for any frozen
factor $p'\in\cRing'$ and $m\in\Mcirc$.

\begin{defn}[{\cite[Definition 4.13]{qin2023analogs}}]

The map $\varphi$ is called the base change from $\LP$ to $\LP'$
associated with $\var$.

\end{defn}

\begin{thm}[{\cite[Theorems 4.14, 4.15]{qin2023analogs}}]\label{thm:base-change-upclalg}

(1) $\varphi$ restricts to a $\kk$-algebra isomorphism $\cRing'\otimes_{\cRing}\clAlg\simeq\clAlg'$.

(2) If $\upClAlg$ has a $\kk$-basis satisfying condition (C1)(C2),
or if $\upClAlg'$ has this property, then $\varphi$ restricts to
a $\kk$-algebra isomorphism $\cRing'\otimes_{\cRing}\upClAlg\simeq\upClAlg'$.

\end{thm}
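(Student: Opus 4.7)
The plan is to leverage the established $\cRing'$-module isomorphism $\varphi\colon \cRing'\otimes_{\cRing}\LP \simeq \LP'$ and, for each part, check that the restriction identifies the (upper) cluster subalgebras on both sides.

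For (1), the key lemma to establish first, by induction on the length of a finite mutation sequence $\seq$, is: for every $k\in I_{\ufv}$,
\begin{equation*}
\var(x_k(\seq\sd)) \;=\; q_{k,\seq}\cdot x_k(\seq\sd')
\end{equation*}
for some frozen factor $q_{k,\seq}\in\cRing'$, and moreover the pair $(\seq\sd,\seq\sd')$ continues to support a variation map. The base case is the defining identity $\var(x_k)=p_k x'_k$. In the inductive step I would expand the exchange relation \eqref{eq:exchange-relation} for $\mu_k^*(x'_k)$: its two Laurent monomials differ by $x^{\col_k\tB}$, which by hypothesis maps to $(x')^{\col_k\tB'}$ under $\var$, so both monomials are sent to a common frozen-factor rescaling of their $\sd'$-counterparts and their sum is rescaled by the same factor. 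Granted this lemma, the inclusion $\varphi(\cRing'\otimes\clAlg)\subseteq\clAlg'$ is immediate. Conversely, every cluster variable of $\sd'$ is the image of a cluster variable of $\sd$ up to a unit in $\cRing'$, giving surjectivity, and injectivity is inherited from $\varphi$ being an isomorphism at the torus level.

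For (2), the inclusion $\varphi(\cRing'\otimes\upClAlg)\subseteq\upClAlg'$ follows by applying the Step~1 lemma seed by seed: $\var$ transports each $\LP(\seq\sd)$ into $\LP'(\seq\sd')$ up to $\cRing'$-rescaling, so the intersection over all seeds is preserved. For the reverse inclusion I would invoke the basis hypothesis. Suppose $\base=\{\base_{[m]}\}$ is an $\Mtrop$-pointed basis of $\upClAlg$ satisfying (C1) and (C2). By the correction technique of Theorem \ref{thm:correction-technique}, each $\var(\base_{[m]})$ is a frozen-factor multiple of a pointed element $\base'_{[m']}$ with the same $F$-function; pointedness at every seed of $\Delta^+_{\sd'}$ is then obtained by transporting through the Step~1 compatibility, so $\base'_{[m']}\in\upClAlg'$. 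Using (C2), the set $\{\base'_{[m']}\}$ spans $\upClAlg'$ as a $\cRing'$-module by a standard leading-degree argument on pointed elements, which gives surjectivity. The symmetric case where the basis lies in $\upClAlg'$ is handled by the analogous argument run through $\varphi^{-1}$ at the torus level.

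The main obstacle is the inductive compatibility underlying the Step~1 lemma: in the quantum setting one must keep track of the $v$-twist contributions carefully and verify that the cumulative frozen-factor correction stays inside $\cRing'$ at every mutation step, rather than drifting into the larger algebra $\LP'$. A secondary technical point for (2) is ensuring that the rescaled pointed set produced by the correction technique is a \emph{basis}, not merely a spanning set, which rests on condition (C2) together with the fact that pointedness determines leading terms uniquely.
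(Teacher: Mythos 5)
Your overall strategy — prove a variation-map compatibility lemma seed by seed, then transport the (upper) cluster algebras across $\varphi$ — is the right one, and part (1) works. One point to make explicit in the inductive step: to conclude that both Laurent monomials of $\mu_{k}^{*}(x'_{k})$ acquire a common frozen-factor rescaling, you are implicitly using the $\hat y$-part of the variation-map condition for the pair $(\seq\sd,\seq\sd')$, i.e.\ $\var\bigl(x(\seq\sd)^{\col_{k}\tB(\seq\sd)}\bigr)=\bigl(x'(\seq\sd')\bigr)^{\col_{k}\tB(\seq\sd')}$. That this condition is preserved under mutation has to be checked alongside the cluster-variable claim; it does hold because the $\hat y$-mutation formula involves only entries $b_{kl}$ with $k,l\in I_{\ufv}$, and these agree for similar seeds. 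Alternatively the whole lemma follows at once from Theorem \ref{thm:similar-cluster-var} plus the fact that $\var$ preserves $F$-functions and unfrozen degrees, so $\var(x_{k}(\seq\sd))$ and $x_{k}(\seq\sd')$ are pointed with the same $F$-function and unfrozen degree, hence differ by a frozen factor.

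Part (2) has a genuine gap in the surjectivity step. You claim that $\{\base'_{[m']}\}$ spans $\upClAlg'$ over $\cRing'$ ``by a standard leading-degree argument on pointed elements.'' This is not routine: it is exactly the content of the proposition stated just above the theorem in the paper (citing \cite[Lemmas 4.9, 4.10]{qin2023analogs}), namely that the set $\cZ'$ of elements of $\LP'$ similar to those of $\cZ$ is a $\kk$-basis of $\alg'$. A leading-degree reduction does not terminate or close up on its own here, because elements of $\upClAlg'$ need not be pointed and the available leading degrees are only those hit by $\base'$; the hypothesis of a (C1)(C2) basis is present precisely because this spanning statement is nontrivial. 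You should invoke that proposition (or reprove it), after which the reverse inclusion is the easy computation $z=\sum c_{i}q'_{i}*\var(\base_{[m_{i}]})=\varphi\bigl(\sum c_{i}q'_{i}\otimes\base_{[m_{i}]}\bigr)$. Finally, the ``symmetric case'' is not handled by ``running the argument through $\varphi^{-1}$ at the torus level'': $\varphi^{-1}$ lands in $\cRing'\otimes_{\cRing}\LP$, not in $\LP$, and there is in general no variation map $\LP'\to\LP$, so you cannot transport $\seq\sd'$-Laurentness backwards seed by seed. Comparing the two sides when the basis sits on the $\sd'$-side requires a different route, e.g.\ through a principal-coefficient seed as in \eqref{eg:base-change-full}.
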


The map $\varphi$ is called a base change for the following reason.
Denote $\alg=\clAlg$ or $\upClAlg$. When $\kk=\Z$ and the condition
in Theorem \ref{thm:base-change-upclalg}(2) is satisfied, we have
a base change of scheme 
\begin{align*}
\varphi^{*} & :\spec\alg'\simeq\spec\cRing'\times_{\spec\cRing}\alg.
\end{align*}

In general, we do not know if there exists a variation map $\var$
from $\LP$ to $\LP'$. However, it is always possible to choose a
variation map $\var_{1}:\LP(\sd^{\prin},\lambda_{1})\rightarrow\LP(\sd)$
and a variation map $\var_{2}:\LP(\sd^{\prin},\lambda_{2})\rightarrow\LP(\sd')$.
Here, $\sd^{\prin}$ is the classical seed of principal coefficient
similar to $\sd$ (and thus similar to $\sd'$). For $i=1,2$, $(\sd^{\prin},\lambda_{i})$
is the quantum seed whose Poisson form is $\lambda_{i}$. $\var_{1}$
(resp. $\var_{2}$) is the homomorphism from $\LP(\sd^{\prin},\lambda_{1})$
to $\LP(\sd)$ (resp. to $\LP(\sd')$) as in Example \ref{eg:principal-coeff-seed}. 

In many situations, we can easily propagate structures and properties
between the cluster algebras $\alg(\sd^{\prin},\lambda_{1})$ and
$\alg(\sd^{\prin},\lambda_{2})$, because they only differ by a choice
quantization. Furthermore, they are related to $\alg'$ and $\alg'$
by the base change $\varphi_{i}$ associated with $\var_{i}$, $i=1,2$,
respectively. And we have seen that base change also propagates interesting
structures and properties. See the commutative diagram \eqref{eg:base-change-full}
for a summery.
\begin{align}
\begin{array}{ccccccc}
\alg & \xleftarrow{\varphi_{1}} & \alg(\sd^{\prin},\lambda_{1}) & \overset{\text{quantization}}{\underset{\text{change}}{\sim}} & \alg(\sd^{\prin},\lambda_{2}) & \xrightarrow{\varphi_{2}} & \alg'\\
\uparrow &  & \uparrow &  & \uparrow &  & \uparrow\\
\cRing & \xleftarrow{\var_{1}} & \cRing(\sd^{\prin},\lambda_{1}) & \overset{\text{quantization}}{\underset{\text{change}}{\sim}} & \cRing(\sd^{\prin},\lambda_{2}) & \xrightarrow{\var_{2}} & \cRing'
\end{array}\label{eg:base-change-full}
\end{align}

\begin{eg}\label{eg:unipotent-subgroup}

Choose $C=\left(\begin{array}{cccc}
2 & -1 & 0 & 0\\
-1 & 2 & -1 & 0\\
0 & -1 & 2 & -1\\
0 & 0 & -1 & 2
\end{array}\right)$ and $\ubj=(\bj_{k})_{k\in[1,6]}=(1,2,1,3,2,1)$. Then $\sd'=\rsd(\ubj)$
is similar to $\frz_{\{4\}}\sd$ in Example \ref{eg:freezing}. Its
ice quiver is depicted in Figure \ref{fig:A3-GHL}. 

\begin{figure}
\caption{The quiver for a seed of $\kk[N]$, $N\subset SL_{4}$ \cite{GeissLeclercSchroeer11}}
\label{fig:A3-GHL}

\begin{tikzpicture}
 [scale=1.5,node distance=48pt,on grid,>={Stealth[length=4pt,round]},bend angle=45, inner sep=0pt]

\node[unfrozen] (v1) at (0.5,-0.5) {1};
 \node[unfrozen] (v2) at (0pt,-28.45pt) {2};
 \node[unfrozen] (v3) at (-14.225pt,-14.225pt) {3};
 \node[frozen] (v4) at (-14.225pt,-42.675pt) {4};
 \node[frozen] (v5) at (-28.45pt,-28.45pt) {5};
 \node[frozen] (v6) at (-42.675pt,-14.225pt) {6};

\draw[->] (v2) -- (v1);
\draw[->] (v4) -- (v2);
  \draw[->] (v5) -- (v3); 
\draw[->] (v3) -- (v2);
\draw[->,dashed] (v6) -- (v5);
\draw[->,dashed] (v5) -- (v4); 

 \draw[->] (v1) -- (v3); 
  \draw[->] (v3) -- (v6); 
    \draw[->] (v2) -- (v5);

\end{tikzpicture}
\end{figure}

The cluster algebra $\bUpClAlg(\frz_{\{4\}}\sd)$ possesses a common
triangular basis element $\frz_{\{4\}}z=x_{2}^{-1}\cdot x_{6}\cdot(1+y_{2}+y_{1}\cdot y_{2})$
(see Section \ref{sec:Common-triangular-bases}). Take the similar
element $z'=x_{2}^{-1}\cdot x_{4}\cdot(1+y_{2}+y_{1}\cdot y_{2})$
in $\LP(\sd')$. Then $z'$ is the $(-f_{2}+f_{4})$-pointed common
triangular basis element of $\bUpClAlg(\sd')$. Under the bijection
$\kappa:\bUpClAlg(\sd')\simeq\kk[N]$, $z'$ is identified with a
dual canonical basis element of $\kk[N]$ up to a scalar multiple
of $v$ (\cite{qin2017triangular}), where $N$ is the subgroup consisting
of unipotent upper triangular matrices of $SL_{3}$ (Section \ref{subsec:Cluster-structures-on-G}).

\end{eg}

\section{A review of Kazhdan-Lusztig algorithms}\label{sec:A-review-Kazhdan-Lusztig}

\subsection{Kazhdan-Lusztig algorithm}\label{subsec:Kazhdan-Lusztig-algorithm}

Let $E$ denote a free $\Z[v^{\pm}]$-module, endowed with a $\Z$-linear
involution $\overline{(\ )}$ sending $v$ to $v^{-1}$, called the
bar involution. We make the following assumptions:
\begin{itemize}
\item $E$ has a $\Z[v^{\pm}]$-basis $\std=\{\std_{w}|w\in W\}$, where
$W$ denotes an index set. 
\item $W$ has an partial order $<$ which is bounded from below. It induces
a natural order on $\std$.
\item For any $\std_{w}$, we have $\overline{\std_{w}}-\std_{w}\in\sum_{w'<w}b_{w',w}\std_{w'}$,
where $b_{w',w}\in\Z[v^{\pm}]$.
\end{itemize}
\begin{lem}[{\cite[Lemma 8.4]{Nakajima04}\cite[7.10]{Lusztig90}}]\label{lem:KL-basis}

Choose $\mm=v\Z[v]$ or $v^{-1}\Z[v^{-1}]$. The module $E$ has a
unique $\Z[v^{\pm}]$-basis $\base=\{\base_{w}|w\in W\}$, such that
\begin{itemize}
\item $\overline{\base_{w}}=\base_{w}$.
\item $\base_{w}=\std_{w}+\sum_{w'<w}c_{w',w}\std_{w'}$, where $c_{w',w}\in\mm$.
\end{itemize}
\end{lem}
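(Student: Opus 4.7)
The plan is to construct $\base_w$ by induction on the partial order on $W$, which is well-founded since $<$ is bounded from below. The base case is handled by the minimal elements $w\in W$: for such $w$ the condition $\overline{\std_w}-\std_w\in\sum_{w'<w}b_{w',w}\std_{w'}$ forces $\overline{\std_w}=\std_w$, so we simply set $\base_w:=\std_w$.

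For the inductive step, suppose $\base_{w'}$ has been constructed for every $w'<w$. Since the change of basis from $\{\std_{w'}\}_{w'<w}$ to $\{\base_{w'}\}_{w'<w}$ is unitriangular, we may look for $\base_w$ in the form
\begin{align*}
\base_w=\std_w+\sum_{w'<w}\gamma_{w',w}\base_{w'},
\end{align*}
with $\gamma_{w',w}\in\Z[v^{\pm}]$ to be determined. Writing the given relation $\overline{\std_w}-\std_w=\sum_{w'<w}b_{w',w}\std_{w'}$ in the basis $\{\base_{w'}\}_{w'<w}$ (again by unitriangularity), we obtain
\begin{align*}
\overline{\std_w}-\std_w=\sum_{w'<w}\beta_{w',w}\base_{w'}
\end{align*}
for some $\beta_{w',w}\in\Z[v^{\pm}]$. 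Applying the bar involution to this equation and using $\overline{\overline{\std_w}}=\std_w$ together with $\overline{\base_{w'}}=\base_{w'}$ shows $\overline{\beta_{w',w}}=-\beta_{w',w}$, i.e.\ each $\beta_{w',w}$ is bar-antiinvariant. Now bar-invariance $\overline{\base_w}=\base_w$ translates into the system of equations $\gamma_{w',w}-\overline{\gamma_{w',w}}=\beta_{w',w}$ for all $w'<w$.

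The key algebraic input is then the elementary fact that for either choice $\mm=v\Z[v]$ or $\mm=v^{-1}\Z[v^{-1}]$, every bar-antiinvariant element $\beta\in\Z[v^{\pm}]$ admits a unique decomposition $\beta=\gamma-\overline{\gamma}$ with $\gamma\in\mm$: indeed, writing $\beta=\sum_n a_n v^n$ with $a_{-n}=-a_n$ (so $a_0=0$), the unique solution is $\gamma=\sum_{n>0}a_n v^n$ in the first case and $\gamma=\sum_{n<0}a_n v^n$ in the second. This yields existence of the required $\gamma_{w',w}\in\mm$, hence of $\base_w$. Uniqueness follows by the same argument: if $\base_w$ and $\base'_w$ both satisfy the hypotheses, their difference is bar-invariant and lies in $\sum_{w'<w}\mm\cdot\base_{w'}$, but the bar-invariance forces each coefficient to satisfy $\gamma=\overline{\gamma}$, and $\mm\cap\overline{\mm}=0$ forces $\gamma=0$.

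The only nontrivial point is organizational rather than technical: one must ensure the induction is well-defined, which is exactly the role of the ``bounded from below'' hypothesis on $<$ (together with the finite support of each $\overline{\std_w}-\std_w$ in the basis $\std$, which is implicit in the statement). The rest is the standard Kazhdan--Lusztig bar-invariance argument, transposed to this abstract setting.
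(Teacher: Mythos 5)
Your proof is correct and follows essentially the same approach as the paper's: induction on the well-founded order, re-expressing $\overline{\std_w}-\std_w$ in the already-constructed $\base_{w'}$, observing the coefficients are bar-antiinvariant, uniquely decomposing them via $\mm\oplus\overline{\mm}=\{\beta:\overline\beta=-\beta\}$, and deducing uniqueness from $\mm\cap\overline{\mm}=0$. The only cosmetic difference is that you handle the base case and the finite-support observation explicitly, whereas the paper leaves them implicit.
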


\begin{proof}

Since $<$ is bounded from below on $W$, the basis $\base$ could
be recursively determined by $\std$. 

More precisely, for any given $w$, assume that $\base_{w'}$ have
been determined for all $w'<w$. We decompose $\overline{\std_{w}}-\std_{w}=\sum_{w'<w}d_{w',w}\base_{w'}$
where $d_{w',w}\in\Z[v^{\pm}]$. Note that $\overline{d_{w',w}}=-d_{w',w}$.
Uniquely decompose $d_{w',w}=\overline{e_{w',w}}-e_{w',w}$ such that
$e_{w',w}\in\mm$. Then $\base_{w}:=\std_{w}-\sum_{w'<w}e_{w',w}\base_{w'}$
is bar-invariant. In this way, we recursively construct the desired
basis $\base$ satisfying all the conditions.

To show the uniqueness, assume that $\base'=\{\base'_{w}|w\in W\}$
is another such basis. If $w$ is minimal in $W$, we have $\base'_{w}=\std_{w}=\base_{w}$.
Inductively, assume we have shown $\base_{w'}=\base'_{w'}$ for all
$w'<w$. Denote $\std_{w}=\base'_{w}+\sum e'_{w',w}\base'_{w'}$,
where $e'_{w',w}\in\mm$. Then we have $0=\std_{w}-\std_{w}=(\base_{w}-\base'_{w})+\sum_{w'<w}(e_{w',w}-e'_{w',w})\base_{w'}$.
The bar-invariance implies that $e_{w',w}-e'_{w',w}=0$ and, consequently,
$\base_{w}=\base'_{w}$.

\end{proof}

When $E$ is the Hecke algebra of the Coxeter group $W$, we can choose
$\std$ to be its standard basis $\{T_{w}|w\in W\}$, $<$ to be the
Bruhat order, and $\mm=v\Z[v]$. In this case, the resulting basis
$\base$ is called its canonical basis, and the transition coefficients
$c_{w',w}$ for $\base_{w}=\std_{w}+\sum_{w'<w}c_{w,w'}\std_{w'}$
are called the Kazhdan-Lusztig polynomials. 

From now on, we call the recursive algorithm for constructing $\base$
in the proof of Lemma \ref{lem:KL-basis} the Kazhdan-Lusztig algorithm
(the KL algorithm for short). The basis $\base$ will be called the
Kazhdan-Lusztig-type basis (the KL basis for short). 

\begin{rem}\label{rem:KL-algorithm-correction}

It may be helpful to think of the KL algorithm as sharing, in some
sense, the spirit of correction as the classical Gram--Schmidt procedure:
\begin{itemize}
\item Let there be given an initial basis $\std$ which is sorted by some
(partial) order $<$. 
\item Modifying each the basis elements by adding correction terms (basis
elements inferior in $<$), we obtain a new basis $\base$ with desired
properties.
\end{itemize}
\end{rem}

\begin{rem}

The above algorithm works for $\Q(v)$-modules as well. For example,
let $W$ be a finite Weyl gorup and $l(w_{0})$ be the length of its
longest element. When $E$ is the quantized enveloping algebra $U_{q}(\frn)$,
we can choose $\std$ to be its PBW basis, endow $W=\N^{l(w_{0})}$
with the lexicographical order, and take $\mm=v^{-1}\Z[v^{-1}]$.
Then the resulting basis $\base$ is the canonical basis.

\end{rem}

\subsection{Categorification}\label{subsec:Categorification}

Consider the base change in the KL algorithm:
\begin{align}
\std_{w} & =\base_{w}+\sum_{w'<w}e_{w',w}\base_{w'}\label{eq:std-to-KL}
\end{align}
In literature, it often happens that the coefficients $e_{w',w}$
take values in $\mm_{+}:=v\N[v]$ or $v^{-1}\N[v^{-1}]$. Moreover,
$\base$ often has positive structures constants: $\forall w,w'\in W$,
we have
\begin{align*}
\base_{w}*\base_{w'} & =\sum_{w''\in W}b_{w''}\base_{w''},\quad b_{w''}\in\N[v^{\pm}].
\end{align*}
These properties are far from obvious from the KL algorithm. They
are often implied by categorification, which we will explain below.

Let $\F$ denote a field. Let $(\cT,\otimes)$ denote a $\F$-linear
category endowed with an exact bifunctort $\otimes$ called the tensor
functor. We assume $\cT$ is a tensor category in the sense of \cite[Section A.1]{kang2018symmetric},
which we will also call a monoidal category. We endow its Grothendieck
ring $K_{0}(\cM)$ with the multiplication $*$ given by the tensor
product $\otimes$:
\begin{align*}
[X]*[Y] & :=[X\otimes Y]
\end{align*}
where $X,Y\in\cT$ and $[X],[Y]$ denote their isoclassess in $K_{0}(\cT)$.
Under our definition of $\cT$, $K_{0}(\cT)$ is associative, unital,
and has a $\Z$-basis $\simp=\{[S]|S\text{ is a simple object}\}$.
Moreover, there is an object $Q$ in $\cT$ such that $\bq:=Q\otimes(\ )$
is an autoequivalence functor on $\cT$. This makes $K_{0}(\cT)$
a $\Z[q^{\pm}]$-algebra where $q[X]:=[\bq X]$.

Denote $\kk=\Z[v^{\pm}]$, $\Q[v^{\pm}]$, $\Q(v)$. Take the $\kk$-module
$K=\kk\otimes_{\Z[q^{\pm}]}K_{0}(\cT)$ such that $v=q^{\Hf}$. If
$K_{0}$ is non-commutative, it will often induces a multiplication
for $K$. Otherwise, we could often endow $K$ with a non-commutative
multiplication $*$ (with desired properties), such that the classical
limit $K|_{v\mapsto1}$ is $K_{0}$. The algebra $K$ is called the
quantum Grothendieck ring of $\cT$. 

Let $\cT^{\op}$ denote the opposite monoidal category of $\cT$ in
the sense of \cite[Definition 2.1.5]{etingof2015tensor}. In particular,
its tensor product $\otimes^{\op}$ is given by $X\otimes^{\op}Y:=Y\otimes X$
for $X,Y\in\cT$. Define its quantum Grothendieck ring to be the opposite
algebra $K^{\op}$.

Let $R$ be an $\kk$-algebra with a KL basis $\base$. 

\begin{defn}\label{defn:categorification}

We say a monoidal category $\cT$ categorifies the pair $(R,\base)$
is categorified if the following conditions are satisfied.
\begin{itemize}
\item There is a $\kk$-algebra isomorphism $\kappa:R\simeq K$.
\item For any simple object $S$ of $\cT$, we have $v^{\alpha}[S]\in\kappa\base$
for some $\alpha\in\Z$.
\end{itemize}
\end{defn}

Assume $\cT$ categorifies $(R,\base)$. If $\std_{w}$ also corresponds
to an object in $\cT$, then coefficients $e_{w',w}$ are the Jordan--Hölder
multiplicities of its composition factors. Thus we must have $e_{w',w}\in\mm_{+}$.

In analogous to based modules in \cite{Lus:intro}, we might call
the pair $(R,\base)$ a based algebra, if it has some additional desired
properties.

\begin{rem}

We have extended the Grothendieck ring $K_{0}(\cT)$ to a $\kk$-algebra
$K$ for applications in quantum cluster algebras; see \cite[Section 5.1]{qin2023analogs}.
This technical treatment might not be necessary for other purposes.

Sometimes, the construction of a suitable $\kk$-algebra $K$ from
$\cT$ might be highly non-trivial, e.g., one might want to deform
a commutative Grothendieck ring $K_{0}(\cT)$ to a non-commutative
ring $K$ when studying representations of quantum affine algebras
\cite{Nakajima01} \cite{VaragnoloVasserot03} \cite{Hernandez02}
(Section \ref{subsec:Cluster-algebras-from-q-aff-alg}).

\end{rem}

\section{Common triangular bases }\label{sec:Common-triangular-bases}

Let $\alg$ denote an quantum ordinary cluster algebra $\clAlg$ or
an quantum upper cluster algebra $\upClAlg$. In general, there is
no known initial basis $\std$ for $\alg$ on which we could apply
the KL algorithm (Section \ref{sec:A-review-Kazhdan-Lusztig}). Following
\cite{qin2017triangular}, we hope to construct its KL basis in the
following procedure:
\begin{enumerate}
\item We choose any seed $\sd$.
\item We will construct a distinguished linearly independent set $\Inj^{\sd}$
in $\LP(\sd)$ instead of an initial basis $\std$. For our construction
in this step, \textbf{we need to assume that the set of vertices $I$
is finite}.
\item We sort the elements of $\Inj^{\sd}$ by an unbounded partial order
$\prec_{\sd}$.
\item We add correction terms to $\Inj^{\sd}$ using the KL algorithm (Remark
\ref{rem:KL-algorithm-correction}), which might not stop since $\prec_{\sd}$
is unbounded. We will obtain a linearly independent set $\can^{\sd}$
in the completion $\hLP(\sd)$.
\end{enumerate}
It might happen that $\can^{\sd}$ is actually a basis for $\alg$.
Then it will be called the triangular basis for $\alg$ with respect
to $\sd$ if it satisfies some additional technical condition (Definition
\ref{defn:tri-basis}). If $\alg$ has a basis $\can$ such that $\can=\can^{\sd}$
for any seed $\sd$, it will be call the common triangular basis (Definition
\ref{def:common-tri-basis}).

The KL-algorithm implies that the triangular basis $\can^{\sd}$ is
unique if it exists. The uniqueness of $\can$ follows.

\begin{rem}\label{rem:BZ-triangular-basis}

In \cite{BerensteinZelevinsky2012}, Berenstein and Zelevinsky constructed
a KL basis for the quantum cluster algebra $\clAlg(\sd)$ where $\sd$
is any acyclic seed. By acyclic, we mean that there is an order $<$
on $I_{\ufv}$ such that the exchange matrix $B$ satisfies $b_{ij}=0$
whenever $i\leq j$. Using this order, they constructed a basis $\std$
consisting of products of cluster variables in $\sd$ and $\mu_{k}\sd$,
where $k\in I_{\ufv}$. Sorting the elements of $\std$ lexicographically
and applying the KL algorithm, they obtained a basis $\base$, which
we call the BZ-triangular basis $\can^{\BZ}$. They further showed
that different acyclic seeds produce the same basis.

By \cite{qin2019compare,qin2020dual}, $\can^{\BZ}$ coincides with
the common triangular basis $\can$ for the acyclic quantum cluster
algebra.

\end{rem}

\subsection{Triangular bases}

Let us explain how to construct bases in details. Choose any seed
$\sd$, which is assumed to be injective reachable. Note that $|I|$
must be finite under this assumption.

Let $\Inj^{\sd}$ denote the set of all normalized products of the
form $[p*x(\sd)^{g_{+}}*x(\sd[1])^{g_{-}}]^{\sd}$, where $p$ is
a frozen factor, $g_{\pm}\in\N^{I_{\ufv}}$, and $(g_{+})_{k}(g_{-})_{k}=0$
$\forall k\in I_{\ufv}$ (i.e., the product are reduced). Then $\Inj^{\sd}$
is a $\Mcirc(\sd)$-pointed set. In particular, it is linearly independent.
Let $\Inj_{m}^{\sd}$ denote the $m$-pointed element in $\Inj^{\sd}$
as before.

Recall that we have the dominance order $\prec_{\sd}$ on $M(\sd)$.
Choose $\mm=v^{-1}\Z[v^{-1}]$. In analogous to Lemma \ref{lem:KL-basis},
we have the following result.

\begin{lem}\label{lem:inj-tri}

There is a unique subset $\can^{\sd}=\{\can_{m}^{\sd}|m\in M(\sd)\}$
of $\hLP(\sd)$ such that
\begin{itemize}
\item $\overline{\can_{m}^{\sd}}=\can_{m}^{\sd}$.
\item $\can_{m}^{\sd}=\Inj_{m}^{\sd}+\sum_{m'\prec_{\sd}m}c_{m',m}\Inj_{m'}^{\sd}$
in $\hLP(\sd)$, where $c_{m',m}\in\mm$.
\end{itemize}
\end{lem}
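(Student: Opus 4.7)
The plan is to adapt the recursive KL construction from the proof of Lemma \ref{lem:KL-basis}, with $\Inj^{\sd}$ playing the role of the standard basis and $\prec_{\sd}$ playing the role of the order $<$. The essential new feature is that $\prec_{\sd}$ is not bounded below on $\Mcirc(\sd)$, so the recursion will not terminate in finitely many steps, and the resulting $\can_m^{\sd}$ must be interpreted as a (possibly infinite) formal series. This is precisely why one works in the completion $\hLP(\sd)$ rather than in $\LP(\sd)$.

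First I would verify the bar-expansion input: for each $m \in \Mcirc(\sd)$, the element $\overline{\Inj_m^{\sd}} - \Inj_m^{\sd}$ admits a (possibly infinite) expansion
\begin{equation*}
\overline{\Inj_m^{\sd}} - \Inj_m^{\sd} \;=\; \sum_{m' \prec_{\sd} m} b_{m',m} \cdot \Inj_{m'}^{\sd}, \qquad b_{m',m} \in \Z[v^{\pm}],
\end{equation*}
interpreted in $\hLP_{\preceq m}(\sd)$. Indeed, since $\Inj_m^{\sd}$ is $m$-pointed and the leading $x^{m}$ is bar-invariant, the difference lies in the topologically closed subspace spanned by $x^{m'}$ with $m' \prec_{\sd} m$; and because $\Inj^{\sd}$ is $\Mcirc(\sd)$-pointed, every element of that subspace has a unique expansion in terms of the $\Inj_{m'}^{\sd}$.

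Next, I would write
\begin{equation*}
\can_m^{\sd} \;=\; \Inj_m^{\sd} + \sum_{0 \neq n \in N^{\oplus}} e_n \cdot \Inj_{m + p^*(n)}^{\sd}, \qquad e_n \in \mm,
\end{equation*}
and solve for the $e_n$ by induction on $|n| := \sum_{k \in I_{\ufv}} n_k$, which is well-founded on $N^{\oplus} \simeq \N^{I_{\ufv}}$. Imposing $\overline{\can_m^{\sd}} = \can_m^{\sd}$ and comparing the coefficients of each $\Inj_{m + p^*(n)}^{\sd}$ on both sides yields an equation $e_n - \overline{e_n} = d_n$ in $\Z[v^{\pm}]$, where $d_n$ is already known from the $b$-coefficients above and from the previously constructed $e_{n'}$ with $n' < n$ (componentwise). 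Bar-invariance forces $\overline{d_n} = -d_n$, so $d_n$ has no constant term; the decomposition $\Z[v^{\pm}] = v^{-1}\Z[v^{-1}] \oplus \Z \oplus v\Z[v]$ then yields a unique $e_n \in \mm$ solving the equation. Convergence of the resulting formal sum in $\hLP(\sd)$ follows from interval-finiteness: for any fixed $n'' \in N^{\oplus}$, only the finitely many $n \leq n''$ contribute to the coefficient of $x^{m + p^*(n'')}$.

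For uniqueness, if $\can'$ were another element with the required properties, then $\can_m^{\sd} - \can'$ would be bar-invariant and lie in $\sum_{m' \prec_{\sd} m} \mm \cdot \Inj_{m'}^{\sd}$ inside $\hLP_{\preceq m}(\sd)$; applying the same inductive argument to its coefficients, together with $\mm \cap \overline{\mm} = \{0\}$, forces each coefficient to vanish. The main subtlety is justifying the formal manipulations in $\hLP(\sd)$ rigorously: both the initial bar-expansion and the convergence of the final series rest on the $\Mcirc(\sd)$-pointedness of $\Inj^{\sd}$ and the interval-finiteness of the componentwise order on $N^{\oplus}$, which together ensure that only finitely many terms contribute to any prescribed Laurent monomial.
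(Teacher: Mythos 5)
Your argument is correct and takes essentially the same approach as the paper: the paper's proof is a one-line reference to the KL algorithm viewed as an infinite correction process (Remark \ref{rem:KL-algorithm-correction}), deferring the details to \cite[Theorem 6.1.3, Lemma 6.1.4]{qin2020dual} and \cite[7.10]{Lusztig90}, and your write-up is precisely that deferred argument — the induction on the componentwise order (or $|n|$) on $N^{\oplus}$, the observation that convergence in $\hLP(\sd)$ follows from the finiteness of $\{n \leq n''\}$, and uniqueness from $\mm \cap \overline{\mm} = \{0\}$.
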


The decomposition in the second condition in Lemma \ref{lem:inj-tri}
will be called a $(\prec_{\sd},\mm)$-decomposition. Note that it
could be an infinite sum. Under an appropriate topology of $\bLP(\sd)$,
$\Inj$ becomes a topological basis of $\bLP(\sd)$, and this infinite
sum becomes convergent; see \cite{davison2019strong}.

\begin{proof}

The basis could be constructed via the KL algorithm, which should
be viewed as an infinite correction process (Remark \ref{rem:KL-algorithm-correction}).
See \cite[Theorem 6.1.3, Lemma 6.1.4]{qin2020dual} or \cite[7.10]{Lusztig90}
for a detailed treatment.

\end{proof}

Let $\alg$ denotes the quantum cluster algebra $\clAlg$ or $\upClAlg$.

\begin{defn}[{\cite[Definition 6.1.1]{qin2017triangular}}]\label{defn:tri-basis}

Assume $\sd$ is finite-rank and injective reachable. Let $\can^{\sd}=\{\can_{m}^{\sd}|m\in\Mcirc(\sd)\}$
denote a $\kk$-basis of $\alg$, such that $\can_{m}^{\sd}$ are
$m$-pointed. It is called the triangular basis $\can^{\sd}$ of $\alg$
with respect to $\sd$, if it satisfies the following conditions.
\begin{itemize}
\item Bar-invariance: $\overline{\can_{m}^{\sd}}=\can_{m}^{\sd}$.
\item $\can^{\sd}$ contains all cluster monomials of $\sd$ and $\sd[1]$.
\item Triangularity: $\forall m\in\Mcirc(\sd)$ and $i\in I$, we have the
following $(\prec_{\sd},\mm)$-triangular decomposition
\begin{align*}
[x_{i}(\sd)*\can_{m}^{\sd}]^{\sd} & =\can_{m+f_{i}}^{\sd}+\sum_{m'\prec_{\sd}m}b_{m'}\can_{m'}^{\sd},\ b_{m'}\in\mm.
\end{align*}
\end{itemize}
\end{defn}

Let $\can^{\sd}$ denote the triangular basis with respect to $\sd$.
By definition, it must be the unique subset in Lemma \ref{lem:inj-tri}.

\subsection{Common triangular bases}

\begin{defn}[{Common triangular basis \cite{qin2017triangular}\cite{qin2021cluster}}]\label{def:common-tri-basis}

If $\can$ is a $\kk$-basis of $\alg$ such that $\can$ is the triangular
basis $\can^{\sd}$ for any seed $\sd$, it will be call the common
triangular basis of $\alg$. If further, $\can\cap\overline{\alg}$
is a basis of $\overline{\alg}$, $\can\cap\overline{\alg}$ is called
the common triangular basis of $\overline{\alg}$.

\end{defn}

Assume $\alg$ has the common triangular basis $\can$. Then it contains
all cluster monomials by definition. \cite[Proposition 6.4.3]{qin2020dual}
implies the following. 

\begin{prop}

The common triangular basis $\can$ is $\tropM$-pointed.

\end{prop}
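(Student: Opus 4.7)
The plan is to show that every element $b \in \can$ is compatibly pointed at every pair of seeds $\sd, \sd'$, which by the definition in Section \ref{sec:Cluster-algebras} is exactly the $\tropM$-pointedness of $\can$. Since tropical mutations compose, $\phi_{\sd'',\sd} = \phi_{\sd'',\sd'} \circ \phi_{\sd',\sd}$, it suffices to treat a single elementary mutation $\sd' = \mu_k \sd$ and then propagate along an arbitrary mutation path from $\sd$ to $\sd'$ by induction. Fix $b = \can_m^\sd \in \can$; since $\can$ is simultaneously the triangular basis with respect to $\sd'$, the same element equals $\can_{m'}^{\sd'}$ for a unique $m' \in \Mcirc(\sd')$, and the goal reduces to proving $m' = \phi_{\sd',\sd}(m)$.

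First I would verify the claim for the building blocks: each $\Inj_{m''}^\sd$ is a normalized product of cluster variables of $\sd$ and $\sd[1]$ together with a frozen factor, hence compatibly pointed across $\sd$ and $\sd'$ by the standard tropical transformation of degrees (g-vectors) of cluster variables (\cite[Lemma A.1.2]{qin2019bases}) combined with the multiplicativity of pointed elements. Thus $\Inj_{m''}^\sd$, viewed in $\LP(\sd')$, is $\phi_{\sd',\sd}(m'')$-pointed. Next I would transport the triangular decomposition
\[
b = \Inj_m^\sd + \sum_{m'' \prec_\sd m} c_{m'',m}\, \Inj_{m''}^\sd
\]
from $\hLP(\sd)$ to $\hLP(\sd')$ via $\mu_k^*$. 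Using that $\phi_{\sd',\sd}$ intertwines $p^*$ with $(p')^* \circ \psi^N$ and that $\psi^N$ sends $N^\oplus$ into $N^\oplus$, one verifies that $\phi_{\sd',\sd}$ preserves the dominance order: if $m'' \prec_\sd m$ then $\phi_{\sd',\sd}(m'') \prec_{\sd'} \phi_{\sd',\sd}(m)$. Hence the transported identity remains $(\prec_{\sd'},\mm)$-triangular with unique leading degree $\phi_{\sd',\sd}(m)$, and the uniqueness in Lemma \ref{lem:inj-tri} forces $m' = \phi_{\sd',\sd}(m)$.

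The main obstacle is the careful bookkeeping required to push the \emph{infinite} triangular sum in $\hLP(\sd)$ across $\mu_k^*$ into $\hLP(\sd')$ without losing track of leading terms, together with the strict preservation of dominance by the piecewise-linear map $\phi_{\sd',\sd}$. Once these compatibilities are in place, the proposition follows immediately from the defining property that $\can$ serves as the triangular basis at every seed.
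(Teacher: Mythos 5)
The overall strategy---reduce to a single mutation, transport the $(\prec_\sd,\mm)$-decomposition along $\mu_k^*$, and read off the new leading degree---is a sensible template, but two of the lemmas you treat as "standard" are precisely where the work lies, and as stated your argument does not establish them.

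\textbf{Compatible pointedness of the $\Inj^\sd$ elements is not a consequence of multiplicativity.} You assert that each $\Inj^\sd_{m''}$ is compatibly pointed at $\sd$ and $\sd'=\mu_k\sd$ because it is a product of cluster variables of $\sd$ and $\sd[1]$, each of which transforms tropically, "combined with the multiplicativity of pointed elements." Multiplicativity does give $\deg^\sd(z_1 z_2)=\deg^\sd z_1+\deg^\sd z_2$ and likewise in $\sd'$, but compatible pointedness of the product additionally requires
\[
\phi_{\sd',\sd}\bigl(\deg^\sd z_1+\deg^\sd z_2\bigr)=\phi_{\sd',\sd}(\deg^\sd z_1)+\phi_{\sd',\sd}(\deg^\sd z_2),
\]
and $\phi_{\sd',\sd}$ is only piecewise linear, with domains of linearity $\{m_k\ge 0\}$ and $\{m_k\le 0\}$. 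The factors of an $\Inj^\sd_{m''}$ come from two different seeds $\sd$ and $\sd[1]$, whose $g$-vectors do not lie in a single cone of linearity of $\phi_{\sd',\sd}$; the "reduced" condition $(g_+)_k(g_-)_k=0$ removes the index-$k$ clash for $x_k(\sd)$ versus $x_k(\sd[1])$ but does nothing to control the $k$-th coordinate of $\deg x_j(\sd[1])$ for $j\neq k$, which can be negative while $\deg x_k(\sd)=f_k$ is positive. So the additivity you need is exactly the nontrivial content and would have to be proved, not inferred.

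\textbf{Dominance preservation by $\phi_{\sd',\sd}$ is likewise not automatic.} Your argument that $\phi_{\sd',\sd}$ "intertwines $p^*$ with $(p')^*\circ\psi^N$ and $\psi^N$ sends $N^\oplus$ into $N^\oplus$" runs into two problems. First, $\psi^N$ does \emph{not} preserve positivity: by the definition in Section \ref{sec:Cluster-algebras}, $\psi^N(e'_k)=-e_k$, so it flips the sign at the mutated index. Second, even if the linear intertwining held on the level of the lattice maps, $\phi_{\sd',\sd}$ is not linear, and the intertwining is only valid on each half-space, so it does not directly transport the inequality $m''=m+p^*(n)$ (with $n\in N^\oplus$) into an analogous inequality in $\Mcirc(\sd')$. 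In general $m''\prec_\sd m$ does not imply $\phi_{\sd',\sd}(m'')\prec_{\sd'}\phi_{\sd',\sd}(m)$. Finally, as you note yourself, pushing the infinite $(\prec_\sd,\mm)$-sum through $\mu_k^*$ requires a topological statement about the two distinct completions $\hLP(\sd)$ and $\hLP(\sd')$ that $\mu_k^*$ does not respect a priori; this too needs an argument.

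For contrast, the paper itself disposes of the proposition by citing \cite[Proposition 6.4.3]{qin2020dual}, having first observed that $\can$ contains \emph{all} localized cluster monomials by the very definition of the common triangular basis. Cluster monomials \emph{are} known to be compatibly pointed (this is the substantive tropical-duality / sign-coherence input), and the actual proof exploits this together with the criterion behind Theorem \ref{thm:tri-basis-existence-criterion} rather than attempting to show compatibility of the mixed products $\Inj^\sd$ directly. If you want to pursue your route, the missing input is a theorem establishing compatible pointedness for the $\Inj^\sd$ basis (equivalently, controlling the walls of $\phi_{\sd',\sd}$ relative to the $g$-vectors of $\sd$ and $\sd[1]$); that is not something you can get for free from multiplicativity.
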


\begin{rem}

By \cite[Theorem 4.3.1]{qin2019bases}, when $\sd$ is injective reachable,
any $\tropM$-pointed subset of $\upClAlg$ is a basis for $\upClAlg$.
We deduce that $\alg=\upClAlg$ in this case.

Therefore, for studying common triangular bases, it is sufficient
to consider the upper cluster algebra $\alg=\upClAlg$. In practice,
we might also start with the ordinary cluster algebra $\alg=\clAlg$,
since $\clAlg$ naturally appears and is well understood in some situations.
On the contrary, it might not worth the effort to study the triangular
basis for a general subalgebra of $\upClAlg$.

\end{rem}

\begin{thm}[{\cite{qin2017triangular}\cite{qin2020dual}}]\label{thm:tri-basis-is-canonical}

Let $C$ be any generalized Cartan matrix.

(1) Take any Weyl group element $w\in W$. Endow the quantized coordinate
ring $\kk[N(w)]$ of a unipotent subgroup $N(w)$ with the canonical
cluster structure (\cite{Kimura10} \cite{GeissLeclercSchroeer11}
\cite{GY13,goodearl2020integral}). Then the dual canonical basis
of $\kk[N(w)]$ is its common triangular basis up to $v$-multiples.

(2) Assume $C$ is of type $ADE$. Let $\cO_{l}$ denote a level-$l$
category consisting of representations of the quantum affine algebra
$U_{q}(\hat{\frg})$ in the sense of \cite{HernandezLeclerc09} (Section
\ref{subsec:Cluster-algebras-from-q-aff-alg}). Endow the quantized
Grothendieck ring $K_{t}(\cO_{l})$ with the canonical cluster structure
(\cite{HernandezLeclerc09}\cite{qin2017triangular}). Then the set
of isoclasses of simple objects is its common triangular basis.

\end{thm}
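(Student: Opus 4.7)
The plan is to identify each representation-theoretic basis with the triangular basis $\can^{\sd}$ at a distinguished seed and then propagate the identification to every other seed. Since the triangular basis of Definition \ref{defn:tri-basis} is unique once it exists, the theorem reduces to verifying, at each seed $\sd$, three properties of the candidate basis: bar-invariance, inclusion of all cluster monomials of $\sd$ and $\sd[1]$, and the $(\prec_{\sd},\mm)$-triangularity of left multiplication by every $x_i(\sd)$. The heart of the argument is to check these conditions at one carefully chosen initial seed and then to show that the mutation-compatibility of the candidate basis forces them at every other seed.

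For part (1), I would start with a seed $\sd=\rsd(\ubi)$ associated with a reduced word $\ubi$ of $w$, as in Section \ref{sec:Seeds-associated-with-words}. The dual PBW basis $\std^{\ubi}$ attached to $\ubi$ is known, by Lusztig and Kashiwara, to stand in a Kazhdan--Lusztig-type relation with $\dCan$: each element of $\dCan$ is the unique bar-invariant element whose expansion in $\std^{\ubi}$ is unitriangular with off-diagonal coefficients in $v^{-1}\Z[v^{-1}]$. The crucial step is to identify $\std^{\ubi}$ with the indicator set $\Inj^{\sd}$ of normalized reduced products $[p*x(\sd)^{g_{+}}*x(\sd[1])^{g_{-}}]^{\sd}$ under the canonical isomorphism $\kappa:\bUpClAlg(\sd)\simeq\kk[N(w)]$; in particular, the fundamental variables $W_{k}$ of Section \ref{subsec:Injective-reachable-fundamental-var} should match dual PBW root vectors, and the interval variables $W_{[j,k]}$ of Lemma \ref{lem:interval-var-deg} should match the corresponding dual quantum minors. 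Once the identification $\Inj^{\sd}=\std^{\ubi}$ is in place (up to $v$-multiples), the uniqueness in Lemma \ref{lem:inj-tri} forces $\dCan=\can^{\sd}$.

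To upgrade ``triangular at one seed'' to ``common triangular basis'', I would use the tropical/pointedness framework. The basis $\dCan$ is $\Mtrop$-pointed (its elements admit well-defined $g$-vectors in every cluster via the crystal structure), so Lemma \ref{lem:tropical-to-cluster-monomial} and Theorem \ref{thm:tropical-to-basis} automatically guarantee that $\dCan$ contains every cluster monomial. Two reduced words of $w$ are related by braid moves and commutations, which translate into mutation sequences and permutations on the associated cluster seeds (Section \ref{subsec:Operations-on-signed}); induction along such a mutation path, combined with the correction technique of Theorem \ref{thm:correction-technique} applied to similar pointed elements and with positivity of dual canonical multiplication, carries the triangularity condition from $\sd$ to $\mu_{k}\sd$, and hence to every seed of the cluster algebra.

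Part (2) follows the same pattern with standard modules replacing dual PBW monomials: Nakajima's $(q,t)$-character construction in type $ADE$ equips $K_{t}(\cO_{l})$ with a bar involution and furnishes a Kazhdan--Lusztig expansion of simple classes in terms of standard classes. The Hernandez--Leclerc seed is built from classes of fundamental modules, and its $\Inj^{\sd}$ should correspond to normalized products of standard modules; once this correspondence is established, the mutation-propagation argument proceeds as in part (1). The main obstacle in both parts is precisely this initial matching: one must verify, at the level of both $\prec_{\sd}$-leading degrees and $F$-polynomials, that the cluster-theoretic products defining $\Inj^{\sd}$ reproduce the representation-theoretic ``standard'' objects. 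Making this compatibility explicit at the initial seed -- and in particular identifying the fundamental and interval variables $W_{k}$, $W_{[j,k]}$ with the correct dual PBW vectors respectively Kirillov--Reshetikhin modules -- is the step where the bulk of the representation-theoretic input is consumed.
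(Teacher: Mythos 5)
The central difficulty you identify — matching a representation-theoretic "standard" basis with a cluster-theoretic initial basis — is the right heart of the matter, but your proposal misidentifies which cluster-theoretic object plays that role, and this gap is serious. The set $\Inj^{\sd}$ consists of normalized reduced products $[p*x(\sd)^{g_+}*x(\sd[1])^{g_-}]^{\sd}$, i.e.\ monomials in the cluster variables of just the two seeds $\sd$ and $\sd[1]$; these are \emph{not} the dual PBW monomials. The dual PBW root vectors correspond instead to the fundamental variables $W_k$, which appear at intermediate seeds along the injective-reachable sequence $\Sigma$, and the dual PBW basis therefore corresponds to the standard basis $\stdBasis$ of Section \ref{subsec:Standard-bases}, not to $\Inj^{\sd}$. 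Feeding $\stdBasis$ rather than $\Inj^{\sd}$ into the Kazhdan--Lusztig machine is only known to recover the triangular basis because of a nontrivial comparison theorem (Theorem \ref{thm:std-bases}(4)), which sorts $\stdBasis$ lexicographically rather than by $\prec_{\sd}$; your argument uses this equivalence implicitly without noticing it needs proof, so the invocation of Lemma \ref{lem:inj-tri} to conclude $\dCan=\can^{\sd}$ does not follow from what you have established.

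A second gap is in the propagation step. You assert that $\dCan$ is $\Mtrop$-pointed ``via the crystal structure'' and then invoke Lemma \ref{lem:tropical-to-cluster-monomial} and Theorem \ref{thm:tropical-to-basis}. But the statement that every element of $\dCan$ has a well-defined $g$-vector compatible under all tropical mutations is precisely the assertion that $\dCan$ contains the cluster monomials and satisfies the tropical compatibility needed to be the common triangular basis; assuming it makes the argument circular. The actual existence criterion used (Theorem \ref{thm:tri-basis-existence-criterion}) is more economical and subtler: it requires verifying triangularity at one seed together with the containment of cluster monomials only along the specific mutation sequences $\Sigma$ and $(\sigma^{-1}\Sigma)^{-1}$, and the representation-theoretic input is consumed precisely in checking these two containments, not in an induction over all mutation paths. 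Finally, ``positivity of dual canonical multiplication'' is not the tool that carries triangularity from seed to seed — the condition requires coefficients in $v^{-1}\Z[v^{-1}]$, not nonnegativity, and the mechanism that does the work is the correction technique applied to \emph{similar} seeds, as in the cited references, together with the uniqueness of the triangular basis.
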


\begin{rem}\label{rem:tri-basis-unip-cell}

Take any reduced word $\ueta$ such that $w_{\ueta}=w$. Then we have
a canonical bijection $\kappa:\bUpClAlg(\rsd(\ueta))\simeq\kk[N(w)]$
(see \cite{GeissLeclercSchroeer11}\cite{GY13,goodearl2020integral}
or \cite[Section 8]{qin2020dual}). We also have the De Concini-Procesi
isomorphism $\kk[N(w)][(\kappa x_{j})^{-1}]_{j\in I_{\fv}}\simeq\kk[N^{w}]$,
where $\kk[N^{w}]$ is the quantized coordinate ring of the unipotent
cell \cite{kimura2017twist}. Thus we obtain a cluster structure $\upClAlg(\rsd(\ueta))\simeq\kk[N^{w}]$.
Up to $v$-multiples, the common triangular basis of cluster algebra
$\upClAlg(\rsd(\ueta))$ is obtained from the dual canonical basis
after localization.

In practice, it is convenient to work with either the cluster algebra
$\bUpClAlg(\rsd(\ueta))$, which has the standard basis (Section \ref{subsec:Standard-bases},
dual PBW basis), or with $\upClAlg(\rsd(\ueta))$, to which we could
apply cluster operations (Section \ref{sec:Cluster-operations}).

\end{rem}

\subsection{Existence of the common triangular bases}

The common triangular basis is defined to have strong properties at
the cost that its existence is difficult to prove. Note that $\sd[1]=\Sigma\sd$
and $\sd=(\sigma^{-1}\Sigma)\sd[-1]$ (or equivalently, $\sd[-1]=(\sigma^{-1}\Sigma)^{-1}\sd$).
We have the following necessary and sufficient condition for its existence.

\begin{thm}[{A slightly different version of \cite[Theorem 6.5.4]{qin2020dual}}]\label{thm:tri-basis-existence-criterion}

If $\can$ is the triangular basis $\can^{\sd}$ for $\alg$ with
respect to the seed $\sd$, and it contains the cluster monomials
of the seeds appearing along the mutation sequences $\Sigma$ and
$(\sigma^{-1}\Sigma)^{-1}$starting from $\sd$. Then $\can$ is the
common triangular basis for $\alg$.

\end{thm}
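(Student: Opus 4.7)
The plan is to propagate the triangular-basis property from $\sd$ to all other seeds by induction on mutation distance. Since any seed is connected to $\sd$ by a finite sequence of mutations, it suffices to establish the induction step: if $\can = \can^{\sd}$ and the hypotheses of the theorem hold at $\sd$, then also $\can = \can^{\mu_k \sd}$ and the same hypotheses hold at $\mu_k \sd$, so the induction can continue.

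The first step is to verify that $\can$ is $\tropM$-pointed, not merely $\Mcirc(\sd)$-pointed. The hypothesis that $\can$ contains all cluster monomials along the sequences $\Sigma$ (from $\sd$ to $\sd[1]$) and $(\sigma^{-1}\Sigma)^{-1}$ (from $\sd$ to $\sd[-1]$) pins down the degrees of those specific basis elements in every intermediate seed, via Lemma~\ref{lem:tropical-to-cluster-monomial}. For a general $\can_m$ one then uses bar-invariance and the KL characterization in Lemma~\ref{lem:inj-tri} to force its degree at each intermediate seed to be the correct compatible one. Theorem~\ref{thm:tropical-to-basis} then certifies that $\can$ is a $\tropM$-pointed basis of $\upClAlg$.

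The second step is to check the defining properties of the triangular basis at $\mu_k \sd$ (Definition~\ref{defn:tri-basis}). Bar-invariance is automatic. The containment of cluster monomials of $\mu_k \sd$ and of $(\mu_k \sd)[1]$ in $\can$ follows from Step~1 together with the original hypothesis, since every such cluster monomial equals a cluster monomial of some seed whose monomials are already known to lie in $\can$. The main content is triangularity: for $i \neq k$ one has $x_i(\mu_k \sd) = x_i(\sd)$, so the triangular expansion at $\sd$ transports to $\mu_k \sd$ via the tropical map $\phi_{\mu_k \sd, \sd}$, which is linear on the relevant dominance-order cone; for $i = k$ one expands $x_k(\mu_k \sd)$ via the exchange relation~\eqref{eq:exchange-relation} into two $\sd$-Laurent monomials, applies the triangularity at $\sd$ to each summand, and repackages the result into the $\can^{\mu_k \sd}$-basis.

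The main obstacle is the $i = k$ case of triangularity. The two exchange summands lie in different $\prec_\sd$-cones, and their $\prec_\sd$-triangular tails must recombine into a single $\prec_{\mu_k \sd}$-triangular tail whose coefficients still lie in $v^{-1}\Z[v^{-1}]$. The leverage is that both sides are bar-invariant and $\mu_k \sd$-pointed; the uniqueness in Lemma~\ref{lem:inj-tri} applied inside $\hLP(\mu_k \sd)$ then determines the decomposition once the leading degree and the dominance structure of the correction terms are matched. Finally, one verifies that the induction hypothesis (cluster-monomial containment along the analogous sequences at $\mu_k \sd$) is preserved, which follows because those sequences traverse seeds already covered by the $\tropM$-pointedness established in Step~1.
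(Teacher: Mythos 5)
This survey does not include its own proof of Theorem~\ref{thm:tri-basis-existence-criterion}; it is cited from \cite[Theorem 6.5.4]{qin2020dual}. Your overall strategy --- inducting over single mutations and transferring the defining properties of a triangular basis from $\sd$ to $\mu_k\sd$ --- is the right skeleton and agrees in spirit with the source argument. However, the two steps that carry the actual mathematical weight are not supplied, only asserted.

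First, your Step~1 claim that ``bar-invariance and the KL characterization in Lemma~\ref{lem:inj-tri} force the degree of $\can_m$ at each intermediate seed to be the correct compatible one'' is not a valid inference. Bar-invariance is intrinsic to the quantum torus and carries no information about the $\deg^{\sd'}$ of an element for a different seed $\sd'$; and Lemma~\ref{lem:inj-tri} is formulated entirely inside $\hLP(\sd)$, so it cannot constrain how $\can_m$ sits in $\LP(\mu_k\sd)$. Establishing compatible pointedness of $\can$ at adjacent seeds is precisely the non-trivial content one must prove in order to invoke Theorem~\ref{thm:tropical-to-basis}; you cannot use it as a free input.

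Second, your treatment of the $i=k$ triangularity case is circular. The phrase ``the uniqueness in Lemma~\ref{lem:inj-tri} \dots determines the decomposition once the leading degree and the dominance structure of the correction terms are matched'' concedes exactly the issue that needs a proof: one must show that $[x_k(\mu_k\sd) * \can_m]^{\mu_k\sd}$ has $\prec_{\mu_k\sd}$-leading degree $\phi_{\mu_k\sd,\sd}(m) + f_k$ and that the correction terms have coefficients in $\mm$. Asserting that uniqueness then ``determines'' everything assumes the conclusion. Moreover, ``applying triangularity at $\sd$ to each summand'' is not licensed by Definition~\ref{defn:tri-basis}, which only controls multiplication by $x_i(\sd)$, not by the two Laurent monomials in the exchange relation (one of which involves $x_k(\sd)^{-1}$); extending triangularity to arbitrary Laurent monomials is itself a lemma that must be proved. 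Relatedly, in the $i\neq k$ case the map $\phi_{\mu_k\sd,\sd}$ is piecewise linear, not linear, and the $\prec_\sd$-tail of a product need not lie in a single linearity cone, so the dominance structure does not ``transport'' for free. The missing ingredient in both gaps is the admissibility machinery of \cite{qin2017triangular,qin2020dual}: the hypothesis that $\can^{\sd}$ contains the cluster monomials of the seeds appearing along $\Sigma$ and $(\sigma^{-1}\Sigma)^{-1}$ is used precisely to establish compatible pointedness at $\mu_k\sd$ and to control the correction terms under the single mutation, and your outline would need to incorporate that argument to close either gap.
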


Cluster operations propagate the existence of (common) triangular
basis. Assume $\sd'$ is a seed similar to $\sd$, and $\frz_{F}\sd$
is obtained from $\sd$ by freezing $F\subset I_{\ufv}$. Let $\alg$
denote $\clAlg$ or $\upClAlg$. Assume $\can$ is the triangular
basis for $\alg(\sd)$ with respect to $\sd$. Let $\can'$ denote
the set of similar elements in $\LP(\sd')$ and $\frz_{F}\can$ denote
the set obtained from $\can$ by freezing.

\begin{thm}[{\cite{qin2023analogs}}]

The following statements are true.

(1) $\can'$ is the triangular basis for $\alg(\sd')$ with respect
to $\sd'$. 

(2) If $\can$ is the common triangular basis, then $\can'$ is the
common triangular basis too.

(3) $\frz_{F}\can$ is the triangular basis for $\alg(\frz_{F}\sd)$
with respect to $\frz_{F}\sd$. 

(4) If $\can$ is the common triangular basis, then $\frz_{F}\can$
is the common triangular basis too.

\end{thm}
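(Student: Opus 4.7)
The plan is to treat parts (1)--(2) (similarity) and (3)--(4) (freezing) separately, in each case verifying the three defining properties of a triangular basis (bar-invariance, containment of the cluster monomials of $\sd$ and $\sd[1]$, $(\prec_\sd,\mm)$-triangularity) and then upgrading to the common case via Theorem~\ref{thm:tri-basis-existence-criterion}.

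For part~(1), that $\can'$ is a $\kk$-basis of $\alg(\sd')$ follows from \cite[Lemmas 4.9, 4.10]{qin2023analogs} cited just above. Pointedness is built into the notion of similarity, and bar-invariance transfers because the variation map preserves the $F$-polynomials that define each basis element. Containment of the cluster monomials of $\sd'$ and $\sd'[1]$ is exactly Theorem~\ref{thm:similar-cluster-var}. The only substantive check is triangularity: I would feed the defining decomposition $[x_i(\sd)*\can_m^\sd]^\sd=\can_{m+f_i}^\sd+\sum_{m'\prec_\sd m}b_{m'}\can_{m'}^\sd$ into the correction technique (Theorem~\ref{thm:correction-technique}(2)) to obtain the analogous decomposition in $\LP(\sd')$, with any residual frozen factors absorbed by the $\cRing'$-module structure. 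Part~(2) then follows from Theorem~\ref{thm:tri-basis-existence-criterion}: since similarity is preserved under mutation (Theorem~\ref{thm:similar-cluster-var}), cluster monomials along $\Sigma$ and $(\sigma^{-1}\Sigma)^{-1}$ starting from $\sd$ correspond to cluster monomials along the same sequences starting from $\sd'$, and these lie in $\can'$ because they lie in $\can$.

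For part~(3), assuming $\frz_F\sd$ is injective reachable, I would again check the three triangular-basis conditions for $\frz_F\can$ relative to $\frz_F\sd$. Pointedness is immediate from Lemma~\ref{lem:freezing-basic-properties}, and the basis property follows by combining Theorem~\ref{thm:freeze-compatily-pointed} with Theorem~\ref{thm:tropical-to-basis}. Bar-invariance is preserved because $\frz_{F,m}$ merely discards Laurent monomials whose $F$-support is nonzero, an operation that commutes with the bar involution. Containment of cluster monomials of $\frz_F\sd$ is Theorem~\ref{thm:freeze-cluster-monomial}. For triangularity, I would apply $\frz_{F,m+f_i}$ to the defining decomposition in $\sd$: by Lemma~\ref{lem:freezing-basic-properties}(2) the left-hand side equals $[x_i(\frz_F\sd)*\frz_{F,m}\can_m^\sd]^{\frz_F\sd}$, while Lemma~\ref{lem:freezing-basic-properties}(1) annihilates precisely those terms $\can_{m'}^\sd$ with $\supp(m+f_i-m')\cap F\neq\emptyset$, yielding the required $(\prec_{\frz_F\sd},\mm)$-decomposition among the surviving terms.

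For part~(4), I would invoke Theorem~\ref{thm:tri-basis-existence-criterion} once more: by Theorem~\ref{thm:freeze-cluster-monomial}, $\frz_F$ sends each cluster monomial of a seed in $\Delta_\sd^+$ to a localized cluster monomial of some seed in $\Delta_{\frz_F\sd}^+$, so $\frz_F\can$ contains a great many cluster monomials of $\alg(\frz_F\sd)$. The main obstacle in parts~(3)--(4) is the mismatch between the green-to-red sequences: the mutation sequence $\Sigma$ realizing $\sd[1]$ may mutate at vertices of $F$, which are no longer mutable in $\frz_F\sd$, so $(\frz_F\sd)[1]\neq\frz_F(\sd[1])$ in general. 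I would reconcile this by using that $\frz_F\can$ is $\Mtrop$-pointed at $\frz_F\sd$ and applying Lemma~\ref{lem:tropical-to-cluster-monomial}: any cluster monomial of $(\frz_F\sd)[1]$ is forced to coincide with its similarly-pointed counterpart in $\frz_F\can$, which supplies the cluster monomials required by the criterion and completes the proof.
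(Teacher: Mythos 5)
Your proposal is correct and follows essentially the same route as the paper's one-sentence proof sketch, which simply cites the correction technique (Theorem~\ref{thm:correction-technique}), similarity of localized cluster monomials (Theorem~\ref{thm:similar-cluster-var}), and freezing of cluster monomials (Theorem~\ref{thm:freeze-cluster-monomial}) as the key ingredients. You supply the verification that the sketch leaves implicit, and the way you resolve the one genuine subtlety — that the green-to-red sequence $\Sigma$ realizing $\sd[1]$ may mutate at vertices in $F$, so $(\frz_F\sd)[1]\neq\frz_F(\sd[1])$ — is the right one: invoke Theorem~\ref{thm:freeze-compatily-pointed} to see that $\frz_F\can$ remains compatibly pointed at all seeds of $\Delta^+_{\frz_F\sd}$ once $\can$ is the common triangular basis, and then let Lemma~\ref{lem:tropical-to-cluster-monomial} force the cluster monomials of $(\frz_F\sd)[1]$ to coincide with the correspondingly pointed elements of $\frz_F\can$, which is exactly what Theorem~\ref{thm:tri-basis-existence-criterion} demands. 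One small remark: in part~(1) you should also note explicitly that $\can'$ is closed under multiplication by frozen factors (which follows from triangularity applied to frozen indices $i\in I_{\fv}$), since this is what licenses absorbing the frozen correction factors $p'_s$ produced by Theorem~\ref{thm:correction-technique}(2) back into $\can'$. And you are right to flag the implicit hypothesis that $\frz_F\sd$ is injective reachable in parts~(3)--(4): it is not stated in the theorem but is required by Definition~\ref{defn:tri-basis} for the notion of triangular basis with respect to $\frz_F\sd$ to make sense.
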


\begin{proof}

The desired the statements could be deduced from a combination of
the correction technique (Theorem \ref{thm:correction-technique}),
similarity of the localized cluster monomials (Theorem \ref{thm:similar-cluster-var}),
and freezing of cluster monomials (Theorem \ref{thm:freeze-cluster-monomial}).

\end{proof}

\subsection{Common triangular bases for known cluster algebras}\label{subsec:Common-triangular-bases-known}

There is an ever growing list of cluster algebras that arise from
Lie theory. More precisely, we could construct (classical and quantum)
upper cluster algebra $\upClAlg(\sd)$ whose initial seed $\sd$ arises
from the consideration of the following varieties from Lie theory.
\begin{enumerate}
\item Algebra group $G$: Assume the generalized Cartan matrix is of finite
type. Let $G$ be the associated connected, simply connected, complex
semisimple algebraic group. The coordinate ring $\C[G]$ and the quantized
coordinate ring $\Q(v)[G]$ are upper cluster algebras $\bUpClAlg(\sd)$
defined over $\C$ and $\Q(v)$, respectively; see \cite{qin2025partially}
(the cluster structure on $\C[SL_{n}]$ were shown in \cite{fomin2020introduction};
see \cite{oya2025note} for another approach to $\C[G]$ when $G$
is not of type $F_{4}$).
\item Subvarieties $\AVar$ of $G$: Let $\AVar$ denote unipotent subgroups
$N(w)$, unipotent cells $N^{w}$, or double Bruhat cell $G^{u,v}$,
the (quantized) coordinate rings $\kk[\AVar]$ are know to be cluster
algebras $\bUpClAlg(\sd)$ or $\upClAlg(\sd)$ (see \cite{GeissLeclercSchroeer10,GeissLeclercSchroeer11}\cite{GY13,goodearl2020integral}).
\item Spaces of configurations of flags $\AVar$: Let $\AVar$ denote Grassmannians,
open Positroid varieties, open Richardson varieties, double Bott-Samelson
cells, or braid varieties. Its coordinate ring are know to be classical
upper cluster algebras $\upClAlg(\sd)$; see \cite{scott2006grassmannians}\cite{serhiyenko2019cluster}\cite{galashin2019positroid}\cite{elek2021bott}\cite{shen2021cluster}\cite{galashin2022braid}\cite{casals2022cluster}.
\item Simple Lie group $G$ endowed with non-standard Lie-Poisson brackets
\cite{gekhtman2023unified}
\end{enumerate}
Note that Family (3) includes several (quantum) cluster algebras arising
from monoidal categorifications, in particular, those from representations
of quantum affine algebras \cite{HernandezLeclerc09}\cite{qin2017triangular}
(Section \ref{subsec:Cluster-algebras-from-q-aff-alg}) and in type
$A_{(1)}^{1}$, those from Satake categories \cite{cautis2019cluster}.

\begin{thm}[{\cite{qin2023analogs}}]\label{thm:Lie-tri-basis}

For almost all known quantum upper cluster algebras $\upClAlg$ arising
from Lie theory (families (1)(2)(3)), we have the following results.

(1) $\upClAlg$ possesses the common triangular basis $\can$.

(2) We have $\clAlg=\upClAlg$.

(3) When the generalized Cartan matrix $C$ is symmetric, $\can$
has positive structure constants.

\end{thm}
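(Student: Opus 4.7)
The plan is to propagate the existence of the common triangular basis from the known base case $\kk[N(w)]$ (Theorem \ref{thm:tri-basis-is-canonical}(1)) to all the other cluster algebras listed in families (1), (2), (3), using the cluster operations established in Section \ref{sec:Cluster-operations} (freezing, similarity, and base change). The overall strategy is: for each cluster algebra $\alg$ in the list, exhibit an initial seed $\sd$ that is obtained from a seed $\rsd(\ueta)$ of $\kk[N(w)]$ (for suitable $\ueta$, $w$) by a finite combination of the operations that preserve the property ``has a common triangular basis''.

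For family (2) with $\AVar=N^{w}$, this is essentially Remark \ref{rem:tri-basis-unip-cell}: since $\upClAlg(\rsd(\ueta))$ is the localization of $\bUpClAlg(\rsd(\ueta))\simeq \kk[N(w)]$ at the frozen variables, the common triangular basis survives (up to $v$-multiples). For $\AVar=G^{u,w}$, the relevant seed is the signed-word seed $\dsd(\ubi)$ of Section \ref{sec:Seeds-associated-with-words}; by Section \ref{subsec:Operations-on-signed} it can be reached from $\rsd$ of a reduced word via flips, braid moves, and passage between $\rsd$ and $\dsd$ (the latter introduces extra frozen vertices), and we then invoke the freezing-propagation and similarity-propagation theorems of Section \ref{sec:Cluster-operations}. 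For family (1), a seed of $\C[G]$ or $\Q(v)[G]$ is obtained from an appropriate seed of $\kk[G^{w_0,w_0}]$ by freezing, so the common triangular basis again propagates. For family (3) (Grassmannians, positroid/Richardson varieties, double Bott--Samelson cells, braid varieties), the strategy is identical: each has an initial seed that is either similar to, or obtained by freezing from, a seed of type $\dsd(\ubi)$, and the propagation theorems apply.

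For part (2), once $\can$ is known to exist in $\upClAlg(\sd)$, its elements automatically inherit a nice cluster decomposition from the base case (the dual-canonical-basis elements of $\kk[N(w)]$ have such decompositions via standard bases, cf.\ Section \ref{subsec:Standard-bases}), and nice cluster decompositions are preserved under freezing and similarity by the relevant lemmas in Section \ref{sec:Cluster-operations}. Since every basis element lies in $\clAlg$, we conclude $\clAlg=\upClAlg$. For part (3), when $C$ is symmetric, the quasi-categorification of $\can$ by a monoidal category (Theorem \ref{thm:quasi-categorification-Lie}) forces the structure constants of $\can$ to be tensor-product multiplicities of simple objects, hence to lie in $\N[v^{\pm}]$.

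The main obstacle will be the bookkeeping for the ``almost all'' cases: one must, family by family, identify the correct $\rsd(\ueta)$ to start from, and verify that the chain of freezings, similarities, and base changes connecting it to the target seed actually satisfies the hypotheses of the propagation results (in particular, injective reachability at every intermediate stage, the Injectivity Assumption, and optimization of frozen vertices so that Proposition \ref{prop:optimized-compactified-basis} can be used to descend from $\upClAlg$ to $\bUpClAlg$). The more delicate technical point is handling similarity with nontrivial scaling factor $\rho$ and nontrivial permutation $\sigma$; in the cluster algebras coming from $\C[G]$ and the varieties in family (3), one often needs a principal-coefficient seed as an intermediate object, exactly as in the diagram \eqref{eg:base-change-full}, and the correction technique (Theorem \ref{thm:correction-technique}) must be applied uniformly across all elements of the would-be basis.
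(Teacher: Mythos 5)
Your proposal has the right general shape (propagate the known common triangular basis of unipotent cells via the cluster operations of Section \ref{sec:Cluster-operations}), and parts (2) and (3) are sketched in a way that is broadly consistent with the paper's logic. However, there is a genuine gap in part (1), and it is precisely the point the paper's proof sketch flags as the central difficulty.

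You propose to reach the seed $\dsd(\ubi)$ for $\kk[G^{u,w}]$ (or the seeds for family (3)) starting from $\rsd(\ueta)$ of $\kk[N(w)]$, for the \emph{same} Cartan matrix $C$, using flips, braid moves, ``passage between $\rsd$ and $\dsd$,'' and then ``the freezing-propagation and similarity-propagation theorems.'' This direction does not work. Passing from $\rsd$ to $\dsd$ \emph{adds} $|J|$ frozen vertices, and more generally the double Bruhat cell seeds $\dsd(\ubi)$ for $(u,w)\in W\times W$ are strictly larger than any $\rsd(\ueta)$ for a single reduced word in $J$. The propagation tools of Section \ref{sec:Cluster-operations} only ever move in the \emph{decreasing} direction: freezing converts unfrozen vertices to frozen ones, and similarity/base change preserve the unfrozen part. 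None of them adds vertices or unfreezes vertices, so none of them lets you manufacture a common triangular basis on a bigger seed from one on a smaller seed. As the paper puts it, ``we cannot expect to recover $G$ from $N^{w}$.''

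The missing idea is the \textbf{extension and reduction technique}: one first \emph{enlarges} the generalized Cartan matrix to some $\tC$ (adding rows and columns), so that for a suitable braid element $\tw$ the quantum unipotent cell $\kk[\tN^{\tw}]$ associated with $\tC$ is \emph{bigger} than the target $\kk[\AVar]$ associated with $C$. Only then does one apply freezing and base change to reduce from $\kk[\tN^{\tw}]$ down to $\upClAlg(\rsd(\ubi))$ (a double Bott--Samelson seed for $C$) and then to $\kk[\AVar]$. Without this extension step, the chain of operations you propose runs in the wrong direction and cannot be closed. (Secondarily, invoking Theorem \ref{thm:quasi-categorification-Lie} to establish part (3) is somewhat circular as written, since both results come from \cite{qin2023analogs} and are established in tandem via the same propagation mechanism; this is less serious than the main gap, but worth noting.)
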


Family (4) leads to exotic cluster structures on $\C[G]$, which are
not examined in Theorem \ref{thm:Lie-tri-basis}. 

\begin{proof} We refer the reader to \cite{qin2023analogs} for detailed
arguments. We sketch its approach below.

The existence of common triangular bases is already known for those
from quantum unipotent cells $\kk[N^{w}]$ (Theorem \ref{thm:tri-basis-is-canonical}(1),
Remark \ref{rem:tri-basis-unip-cell}). We wish to propagate structures
and results from $\kk[N^{w}]$ to other cluster algebra $\kk[\AVar]$
where $\AVar$ arises from Lie theory. But $\AVar$ might be more
complicated than $N^{w}$, for example, we cannot expect to recover
$G$ from $N^{w}$. To circumvent this difficulty, we introduce and
use the following \textbf{extension and reduction technique}:

\emph{Extend} $C$ \emph{to a larger generalized Cartan matrix $\tC$
by adding rows and columns. Then we might find a quantum unipotent
cell $\kk[\tN^{\tw}]$ associated with $\tC$ such that it is complicated
enough to recover $\kk[\AVar]$ associated with $C$. }

More precisely, we study $\kk[\AVar]$ as below:
\begin{itemize}
\item Choose\emph{ $\kk[\tN^{\tw}]$ associated with an extended generalized
Cartan matrix $\tC$.}
\item Applying cluster operations (freezing and base changes in Section
\ref{sec:Cluster-operations}), we could propagate structures and
results from \emph{$\kk[\tN^{\tw}]$} to $\upClAlg(\rsd)$, where
$\rsd=\rsd(\ubi)$ is a seed associated with a signed word.\footnote{$\rsd(\ubi)$ is a seed for the coordinate ring of a (half decorated)
double Bott-Samelson variety \cite{shen2021cluster}.}
\item Propagate structures and results from $\upClAlg(\rsd)$ to $\kk[\AVar]$. 
\end{itemize}
\end{proof}

It is desirable to find a natural Lie theoretic interpretation of
the approach in the proof of Theorem \ref{thm:Lie-tri-basis}.

\subsection{Quasi-categorifications}\label{subsec:Quasi-categorifications}

On the one hand, there are many cluster algebras, because one can
construct a cluster algebra from any given skew-symmetrizable matrix.
On the other hand, we only know a limited families of categories from
representation theory that possess good properties. In the following,
we relax the conditions on categorification, so that we could use
categories to study more cluster algebras.

Let $\alg$ denote a (classical or quantum) cluster algebra and $\cT$
be a monoidal category. Let $K$ be the (classical or quantum) Grothendieck
ring of $\cT$. Let $[V]$ denote the element in $K$ corresponding
to $V\in\cT$.

\begin{defn}[{\cite[Definition 5.3]{qin2023analogs}}]

We say $\cT$ admits a cluster structure $\alg$ if the following
conditions are satisfied:
\begin{itemize}
\item There is a $\kk$-algebra injective map $\kappa:\alg\hookrightarrow K$.
\item For any cluster monomial $z$, there exists a simple object $S$ of
$\cT$ such that $\kappa z=v^{\alpha}[S]$ for some $\alpha\in\Z$.
\end{itemize}
We further say $\cT$ categorifies $\alg$ if $\kappa$ is an isomorphism.
This is a weaker version of the categorification in \cite{HernandezLeclerc09}.
We say $\cT$ categorifies $\alg$ after localization if $\kappa$
extends to an isomorphism $\kappa:\alg[x_{j}^{-1}]_{j\in I_{\fv}}\simeq K[\kappa(x_{j})^{-1}]_{j\in I_{\fv}}$.

\end{defn}

\begin{defn}[{\cite[Definition 5.1]{qin2023analogs}}]

Let $\base$ be a $\kk$-basis of the cluster algebra $\alg$. The
pair $(\alg,\base)$ is called a based cluster algebra if the following
conditions hold.
\begin{itemize}
\item $\base$ contains all cluster monomials.
\item (Frozen torus action) $\base$ is a closed under commutative multiplication
by frozen variables: $\forall j\in I_{\fv}$, $x_{j}\cdot\base\subset\base$.
\item (Homogeneity) Any $b\in\base$ is contained in some $x^{m}\cdot\kk[N_{\ufv}]$.
\item (Bar invariance) At the quantum level, the elements of $\base$ are
further assumed to be bar-invariant.
\end{itemize}
\end{defn}

\begin{defn}[{\cite[Definition 5.4]{qin2023analogs}}]

Assume $\cT$ is a monoidal category. We say $\cT$ categorifies a
based cluster algebra $(\alg,\base)$ (resp. after localization) if
the following conditions are satisfied:
\begin{itemize}
\item $\cT$ categorifies $\alg$ (resp. after localization).
\item For any simple object $S$ of $\cT$, we have $v^{\alpha}[S]\in\kappa\base'$
for some $\alpha\in\Z$.
\end{itemize}
\end{defn}

\begin{defn}[{\cite[Definition 5.5]{qin2023analogs}}]

We say a monoidal category $\cT$ quasi-categorifies a based cluster
algebra $(\alg,\base)$, if there exists a based cluster algebra $(\alg',\base')$
which is connected to $(\alg,\base)$ by a finite sequence of base
changes and quantization changes, such that $\cT$ categorifies $(\alg,\base)$
after localization.

\end{defn}

Now we have the following categorification result for the bases in
Theorem \ref{thm:Lie-tri-basis}.

\begin{thm}[{\cite{qin2023analogs}}]\label{thm:quasi-categorification-Lie}

For almost all known cluster algebras $\alg$ arising from Lie theory
(families (1)(2)(3)), when the generalized Cartan matrix is symmetric,
the based cluster algebra $(\alg,\can)$ is quasi-categorified by
some non semisimple monoidal category $(\cT,\otimes)$. 

\end{thm}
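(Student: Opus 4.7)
The plan is to combine the extension-and-reduction technique from the proof of Theorem \ref{thm:Lie-tri-basis} with the known monoidal categorification of $\kk[N(\tw)]$ by modules over quiver Hecke algebras. In the symmetric Kac--Moody setting, the monoidal category $\cT(\tw)$ of finite-dimensional graded modules over $R(\tC)$, restricted to the Serre subcategory determined by $\tw$, categorifies the quantized coordinate ring $\kk[N(\tw)]$: isoclasses of simple modules match, up to $v$-shifts, the dual canonical basis $\dCan$. By Theorem \ref{thm:tri-basis-is-canonical}(1), $\dCan$ coincides with the common triangular basis $\can$, so $\cT(\tw)$ categorifies the based cluster algebra $(\bUpClAlg(\rsd(\ueta)), \can)$ for a reduced word $\ueta$ of $\tw$, and categorifies $(\upClAlg(\rsd(\ueta)), \can) \simeq (\kk[N^{\tw}], \can)$ after localization at the frozen variables (Remark \ref{rem:tri-basis-unip-cell}).

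The next step is to propagate this categorification to an arbitrary $(\upClAlg, \can)$ associated with $\AVar$ from families (1)--(3) and a symmetric $C$. Following the extension-and-reduction strategy sketched in the proof of Theorem \ref{thm:Lie-tri-basis}, I would enlarge $C$ to a symmetric $\tC$ and choose a Weyl group element $\tw$ so that $\kk[\tN^{\tw}]$ is connected, via freezing (Theorem \ref{thm:freeze-cluster-monomial}) and base change (Theorem \ref{thm:base-change-upclalg}), first to $\upClAlg(\rsd(\ubi))$ for a signed word $\ubi$, and then to the desired $\kk[\AVar]$. Base changes and quantization changes are exactly the moves permitted in the definition of quasi-categorification, and the similarity of cluster variables (Theorem \ref{thm:similar-cluster-var}) ensures that the common triangular basis is transported correctly along both operations. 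This would yield a quasi-categorification of $(\upClAlg, \can)$ by an appropriate subcategory of $\cT(\tw)$, and non-semisimplicity is automatic because quiver Hecke algebras admit non-split self-extensions whenever $\tC$ is nontrivial.

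The main obstacle will be to give a precise categorical avatar of the freezing operation $\frz_F$ that fits inside the quasi-categorification framework, which explicitly permits only base changes and quantization changes. Algebraically, $\frz_F$ sets $y_k \mapsto 0$ for $k \in F$ in the $F$-polynomial expansion of $\can_m^{\sd}$, and Theorem \ref{thm:freeze-cluster-monomial} shows cluster monomials behave well under this operation. I would use the correction technique (Theorem \ref{thm:correction-technique}) together with the similar-seeds formalism of Section \ref{sec:Cluster-operations} to show that the freezing move can be realized, up to localization at the frozen variables $x_k$, $k \in F$, as the passage to a full monoidal subcategory of $\cT(\tw)$ whose simples parameterize $\frz_F \can$. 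Verifying this compatibility, and checking that the resulting localized categorification contains a simple object for every element of the common triangular basis of $(\upClAlg, \can)$, is the technical heart of the argument; once established, composing with the remaining base changes of the extension-and-reduction sequence concludes the proof.
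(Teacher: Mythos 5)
Your proposal follows essentially the same strategy as the paper: combine the extension-and-reduction technique from Theorem~\ref{thm:Lie-tri-basis} with a known monoidal categorification of $\kk[\tN^{\tw}]$, then transport the categorification to $\kk[\AVar]$ using freezing and base change. What you flag as the "technical heart"---that freezing should correspond to passage to a monoidal subcategory---is exactly the observation the paper's proof is built on (``if $\cT$ categorifies $(\upClAlg,\base)$, $(\frz_{F}\upClAlg,\frz_{F}\base)$ is categorified by a monoidal subcategory $\cT'$ of $\cT$''), so you have correctly identified the crux rather than missed it. The one genuine divergence is your choice of starting category: you propose graded modules over quiver Hecke algebras (via \cite{Kang2018}, \cite{KhovanovLauda08,Rouquier08}), whereas the paper builds $\cT(\beta_{\ueta})$ as a monoidal subcategory of $\cC_{\Z}$, finite-dimensional representations of a quantum affine algebra (Theorem~\ref{thm:categorify-dBS}, Section~\ref{subsec:Cluster-algebras-from-q-aff-alg}). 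The quiver Hecke route has the advantage of applying uniformly to all symmetric Kac--Moody $\tC$, which matters for family (2) beyond finite type; the quantum affine route ties directly into the $(q,t)$-character and standard-module machinery used elsewhere in the paper (Theorems~\ref{thm:std-bases}, \ref{thm:categorify-dBS}) but is naturally limited to $\tC$ of finite type ADE. Both give non-semisimple categories for the reason you state. So: same skeleton, different flesh on the categorification side, and your flagged obstacle is real but already resolved in the cited reference.
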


\begin{proof}

We refer the reader to \cite{qin2023analogs} for detailed treatments.
The approach in loc. cit. was based on the observation that freezing
corresponds to the restriction on a subcategory: if $\cT$ categories
$(\upClAlg,\base)$, $(\frz_{F}\upClAlg,\frz_{F}\base)$ is categorified
by a monoidal subcategory $\cT'$ of $\cT$.

\end{proof}

\subsection{Standard bases}\label{subsec:Standard-bases}

In general, cluster algebras do not have PBW type basis (i.e, a basis
consisting of all monomials of the chosen distinguished elements),
because they are usually not polynomial rings at the classical level.

Nevertheless, let $\bUpClAlg(\rsd)$ denote the (partially) compactified
upper cluster algebra associated with a quantum seed $\rsd=\rsd(\ueta)$
for some word $\ueta=(\eta_{k})_{k\in[1,l]}$. Then it has the standard
basis (PBW type basis), whose construction is given as follows.

For any $w\in\N^{I}$, define the corresponding standard monomial
to be $\std(w)=[W_{1}^{w_{1}}*\cdots*W_{l}^{w_{l}}]^{\rsd}$, where
$W_{k}$ are the fundamental variables (Section \ref{subsec:Injective-reachable-fundamental-var}).
Define $\stdBasis=\{\std(w)|w=(w_{k})_{k\in[1,l]}\in\oplus_{k=1}^{l}\Z\beta_{k}\}$.
We will sort its elements by the lexicographical order: $\std(w)<\std(w')$
if $w<_{\lex}w'$, or by the reverse lexicographical order $\std(w)<\std(w')$
if $w<_{\rev}w'$. Recall that we say $w<_{\lex}w'$ if $w_{[1,r]}=w'_{[1,r]}$
and $w_{r+1}<w'_{r+1}$ for some $r\in[0,l-1]$, and $w<_{\rev}w'$
if $w_{[r,l]}=w'_{[r,l]}$ and $w_{r-1}<w'_{r-1}$ for some $r\in[2,l+1]$.

Following \cite{qin2017triangular}\cite[Section 9.1]{qin2020dual},
define the linear map $\theta^{-1}:\oplus_{k=1}^{l}\Z\beta_{k}\simeq\Mcirc$
such that
\begin{align}
\theta^{-1}(\beta_{k}) & =f_{k}-f_{k[-1]},\ \forall k\in[1,l].\label{eq:wt-to-g-vector}
\end{align}
where we denote $f_{k^{\min}[-1]}=f_{-\infty}=0$ as before. Let $\can_{m}$
denote the $m$-pointed common triangular basis element of $\bUpClAlg(\rsd)$.
We denote $\can(w):=\can_{\theta^{-1}(w)}$.

\begin{thm}[{\cite[Theorem 8.10]{qin2023analogs}}]\label{thm:std-bases}

For any $\rsd=\rsd(\ueta)$, where $\ueta$ is a finite word in $J$,
the following statements are true.
\begin{enumerate}
\item $\stdBasis$ is a $\kk$-basis of $\bUpClAlg(\rsd)$, called the standard
basis. 
\item We have $\bClAlg(\rsd)=\bUpClAlg(\rsd)$.
\item The standard basis satisfies the analog of the Levendorskii-Soibelman
straightening law:
\begin{align*}
W_{k}*W_{j}-v^{\lambda(\deg W_{k},\deg W_{j})}W_{j}*W_{k} & =\sum_{w\in\N^{[j+1,k-1]}}b_{w}\std(w),\ b_{w}\in\kk,\ \forall j\leq k.
\end{align*}
\item The common triangular basis of $\bUpClAlg(\rsd)$ is $\{\can(w)|w\in\oplus_{k=1}^{l}\N\beta_{k}\}$.
Moreover, it is the Kazhdan-Lusztig-type basis associated with the
standard basis $\stdBasis$, whose basis elements are sorted by the
lexicographical order or by the reverse lexicographical order.
\end{enumerate}
\end{thm}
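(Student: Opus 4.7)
The plan is to reduce to the classical theory of quantum unipotent subgroups $\kk[N(w)]$ via the extension-reduction technique used in the proof of Theorem \ref{thm:Lie-tri-basis}. First I would handle the case when $\ueta$ is reduced, and then propagate to arbitrary $\ueta$ by cluster operations.

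Suppose first that $\ueta$ is a reduced word of some Weyl group element $w$, with $l=l(w)$. Then Remark \ref{rem:tri-basis-unip-cell} provides a canonical identification $\kappa\colon \bUpClAlg(\rsd(\ueta))\simeq \kk[N(w)]$. On the quantum group side, classical results (De Concini--Kac--Procesi, Lusztig) provide a dual PBW basis indexed by $\N^l$, the Levendorskii--Soibelman straightening law among the dual root vectors, and a dual canonical basis arising from the dual PBW basis via the KL algorithm with respect to the lexicographic (or reverse lex) order. Under $\kappa$, fundamental variables $W_k$ should match suitably normalized dual PBW generators $E^*_{\beta_k}$, and the map $\theta^{-1}$ of \eqref{eq:wt-to-g-vector} translates their root weights into $g$-vectors in $\Mcirc$. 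Combined with Theorem \ref{thm:tri-basis-is-canonical}(1), this identifies the dual canonical basis with the common triangular basis and establishes all four claims in the reduced case.

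For a general (possibly non-reduced) $\ueta$, I would apply the extension-reduction technique: embed $\rsd(\ueta)$ into a larger seed $\rsd(\wt\ueta)$ obtained from a reduced word $\wt\ueta$ for an enlarged Cartan datum $\wt C$, so that freezing and base change (Section \ref{sec:Cluster-operations}) recover $\rsd(\ueta)$. Similarity of cluster variables (Theorem \ref{thm:similar-cluster-var}) ensures that fundamental variables correspond under these operations up to frozen factors, while the correction technique (Theorem \ref{thm:correction-technique}) propagates the Levendorskii--Soibelman relations with frozen corrections. Claim (2) then follows from (1): each $\std(w)$ is a product of fundamental variables, which are cluster variables by Lemma \ref{lem:interval-var-deg}, so $\stdBasis\subset \bClAlg(\rsd)\subset \bUpClAlg(\rsd)$, forcing equality. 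For (4), I would verify that the standard monomials are pointed at distinct degrees $\theta^{-1}(w)$ and that the lex (or reverse lex) order on $\oplus_k \Z\beta_k$ refines the dominance order on $\Mcirc$; the uniqueness in Lemma \ref{lem:KL-basis} then matches the resulting KL-type basis with the common triangular basis.

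The hard part will be the non-reduced case: the direct identification with $\kk[N(w)]$ breaks down, so one must carefully track how the dual PBW and dual canonical bases transform under freezing and base change. In particular, checking that the lex/reverse lex order on the weight lattice continues to refine $\prec_{\rsd}$ after these operations, and that the Levendorskii--Soibelman corrections remain in $\bUpClAlg$, will require a detailed analysis of the frozen factors produced by Theorem \ref{thm:correction-technique}.
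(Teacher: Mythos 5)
Your strategy — identifying $\bUpClAlg(\rsd(\ueta))$ with $\kk[N(w)]$ for reduced $\ueta$ via Remark \ref{rem:tri-basis-unip-cell}, importing the dual PBW/dual canonical basis and LS straightening from the quantum group side, matching $W_k$ with dual root vectors and $\theta^{-1}$ with the weight-to-$g$-vector translation, invoking Theorem \ref{thm:tri-basis-is-canonical}(1), and then handling general $\ueta$ by the extension-reduction technique (freezing and base change on a larger $\kk[\tN^{\tw}]$, propagating via Theorems \ref{thm:correction-technique}, \ref{thm:similar-cluster-var}, \ref{thm:freeze-cluster-monomial}) — is essentially the approach taken in \cite{qin2023analogs}, and your deduction of (2) from (1) via Lemma \ref{lem:interval-var-deg} is also the intended one. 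You have correctly flagged where the substantive work lies, namely that the lex/revlex order must refine the dominance order on $\theta^{-1}(\N^l)$ and that the support constraint $w\in\N^{[j+1,k-1]}$ in claim (3) (which is strictly stronger than merely landing in $\bUpClAlg$) must survive the freezing/base-change chain, with the frozen factors produced by Theorem \ref{thm:correction-technique} cancelling so that $b_w\in\kk$.
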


\begin{thm}[{\cite[Theorem 8.10]{qin2023analogs}}]\label{thm:categorify-dBS}

There is a non-semisimple monoidal category $\cT(\beta_{\ueta})$
which categorifies $(\bUpClAlg(\rsd(\ueta)),\can)$. 

\end{thm}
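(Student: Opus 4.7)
The plan is to realize $\cT(\beta_\ueta)$ as a monoidal subcategory of finite-dimensional graded modules over a quiver Hecke (KLR) algebra, generated under the convolution product by the cuspidal modules attached to the letters of $\ueta$. This is motivated by the observation that, by Theorem \ref{thm:std-bases}, the algebra $\bUpClAlg(\rsd(\ueta))$ admits a standard basis $\std(w)=[W_1^{w_1}*\cdots*W_l^{w_l}]^\rsd$ and a common triangular basis $\can(w)$ whose transition is triangular in the lexicographical order; this exactly mirrors, on the categorified side, the composition series of proper standard modules in terms of simple modules, and the resulting category is automatically non-semisimple because these composition series are non-trivial.

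First, I would treat the case where $\ueta$ is a reduced word for a Weyl group element $w$. Here Remark \ref{rem:tri-basis-unip-cell} together with Theorem \ref{thm:tri-basis-is-canonical}(1) identifies $\bUpClAlg(\rsd(\ueta))$ with the quantum unipotent coordinate ring $\kk[N(w)]$, and the common triangular basis with the dual canonical basis. Assuming $C$ is symmetric, $\kk[N(w)]$ is known to be categorified by an appropriate subcategory of finite-dimensional graded modules over a KLR algebra $R$, in which cuspidal modules $L_k$ categorify the fundamental variables $W_k$ (via the identification \eqref{eq:wt-to-g-vector}) and simple modules categorify $\can$. Taking $\cT(\beta_\ueta)$ to be the monoidal subcategory of $R\text{-gmod}$ generated by the $L_k$ under convolution then supplies the required categorification. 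For general $\ueta$, I would invoke the extension-and-reduction technique used for Theorem \ref{thm:Lie-tri-basis}: extend $C$ to a larger generalized Cartan matrix $\tC$ and choose a reduced word $\wt\ueta$ such that $\rsd(\ueta)$ is obtained from $\rsd(\wt\ueta)$ by freezing a subset $F\subset I_\ufv$, possibly combined with a base change. As recalled in the proof of Theorem \ref{thm:quasi-categorification-Lie}, freezing at $F$ corresponds to restriction to a monoidal subcategory, so $\cT(\beta_\ueta)$ is defined as the corresponding subcategory of the KLR category attached to $\wt\ueta$.

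Second, I would verify the matching of algebraic structures. The classes $[L_k]$ in the (quantum) Grothendieck ring satisfy the analog of the Levendorskii--Soibelman straightening law from Theorem \ref{thm:std-bases}(3); convolution products categorify the standard monomials $\std(w)$; and simple modules correspond to $\can(w)$. Combining these with the linear identification $\theta^{-1}$ of \eqref{eq:wt-to-g-vector} yields an isomorphism $\kappa\colon\bUpClAlg(\rsd(\ueta))\simeq K(\cT(\beta_\ueta))$ sending $\can$ to the simples up to $v$-powers, as required.

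The main obstacle is ensuring that the resulting categorification is of $\bUpClAlg(\rsd(\ueta))$ itself, rather than merely of its localization as in Theorem \ref{thm:quasi-categorification-Lie}: one must show that no Ore localization at the frozen variables is required. This relies on the fact, made available by Theorem \ref{thm:std-bases}(1)(2), that the standard basis already spans $\bUpClAlg(\rsd(\ueta))=\bClAlg(\rsd(\ueta))$ without any such localization, together with a check that the image of the simple modules under $\kappa$ exhausts all of $\can$, rather than only the part supported on non-negative exponents of the frozen variables inherited from $\wt\ueta$. For non-reduced $\ueta$ --- where distinct letters need not give distinct cuspidal modules and where braid moves change the distinguished generating family --- this book-keeping is delicate and constitutes the technical heart of the argument.
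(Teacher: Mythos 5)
Your proposal takes a genuinely different route from the paper's. The paper constructs $\cT(\beta_{\ueta})$ directly as a monoidal subcategory of $\cC_{\Z}$, the category of finite-dimensional modules of the quantum affine algebra $U_q(\hat{\frg})$: having chosen a Coxeter word $\uc$ such that $\ueta$ is $\uc$-adapted (which is always possible for a finite word, taking $N$ large in $\uc^N$), one takes the Hernandez--Leclerc subcategory $\cO_{\ueta}(\xi)$ from Section~\ref{subsec:Cluster-algebras-from-q-aff-alg}, and Theorem~\ref{thm:cluster-str-HL-ADE} then identifies $\bUpClAlg(\rsd(\ueta)^{\op})$ with $K_t(\cO_{\ueta})$, with standard modules $M(w)$ mapping to the standard basis, simple modules $S(w)$ to $\can$, and the KL-type triangularity supplied by Theorem~\ref{thm:qt-char-simples}. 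This works uniformly for arbitrary (non-reduced, unsorted) finite words $\ueta$ because the fundamental modules $W_k(\xi)$ are always available; no reduction, base change, quantization change, or freezing is invoked. You propose KLR/quiver Hecke categories instead.

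The gap in your proposal appears precisely where you anticipate it: the passage from reduced to general $\ueta$. For a reduced word your KLR picture is sound (cuspidal modules, proper standard modules, simples), and it is what lies behind Theorem~\ref{thm:tri-basis-is-canonical}(1). But for a non-reduced $\ueta$ the fundamental variables $W_k$ have no intrinsic KLR meaning, and your fallback — extending $C$ to a larger $\tC$, choosing a reduced $\widetilde{\ueta}$, then freezing and base-changing back down — is exactly the extension-and-reduction technique that is used to prove Theorem~\ref{thm:quasi-categorification-Lie}, whose conclusion is only \emph{quasi}-categorification (categorification up to base change, quantization change, and localization at frozen variables). That weaker notion is precisely what Theorem~\ref{thm:categorify-dBS} improves upon, and the improvement is the point: the statement here asserts a genuine categorification of the partially compactified algebra $\bUpClAlg(\rsd(\ueta))$ with its basis $\can$, not merely of a localized or base-changed relative. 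Saying that this ``relies on'' Theorem~\ref{thm:std-bases}(1)--(2) plus a ``check'' that the simples exhaust $\can$ does not close the gap; indeed, freezing as used in the paper (sending certain $y_k$ to $0$ in the $F$-functions) is not realized by passing to a monoidal subcategory of the $\tC$-KLR category in a way that keeps the compactified algebra on the nose. The quantum affine approach is not an incidental choice — it is what makes the sharper, unlocalized statement accessible for all words $\ueta$ at once.

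Two minor points. First, your observation that non-semisimplicity follows from nontrivial composition series of standard modules is correct and matches the intended meaning in both approaches. Second, the paper's construction also records that the resulting category depends only on the positive braid $\beta_{\ueta}$, not on the word $\ueta$; your KLR proposal has no analogue of this invariance, since the cuspidal generating family visibly depends on the chosen reduced expression.
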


By \cite{qin2023analogs}, we could construct $\cT(\beta_{\ueta})$
as a monoidal subcategory of $\cC_{\Z}$, which is the monoidal category
consisting of finite dimensional modules of quantum affine algebras
$U_{q}(\hat{\frg})$ (Section \ref{subsec:Cluster-algebras-from-q-aff-alg}).
In this construction, the monoidal category $\cT(\beta_{\ueta)})$
depends on the braid element $\beta_{\ueta}$, not on the choice of
the word $\ueta$.

Assume $C$ is of finite type. We could further compute the fundamental
variables as below. First, $W_{\eta_{1}}$ is an initial cluster variable
$x_{1}(\rsd)$. Next, we can embed $\bUpClAlg(\rsd)=\bClAlg(\rsd)$
as a subalgebra of an infinite rank cluster algebra $\bClAlg_{\infty}$
(\cite{qin2024infinite}, Section \ref{sec:Extension-to-infinite}).
The cluster algebra $\bClAlg_{\infty}$ is isomorphic to the virtual
quantum Grothendieck ring $K_{\infty}$ of a quantum affine algebra
(also called the Bosonic extension of the quantized enveloping algebra);
see \cite{jang2023quantization}. Moreover, the braid group $\Br$
acts on $K_{\infty}$ by \cite{jang2023braid}\cite{kashiwara2024braid}.
Then the remaining $W_{k}$ could be computed by using the action
of $\Br$ as follows.

\begin{thm}[{\cite[Theorem 1.4]{qin2024infinite}}]

Assume $C$ is of finite type, then the fundamental variables of $\bUpClAlg(\rsd(\ueta))$
could be computed by using the braid group action: 
\begin{align*}
W_{k} & =\sigma_{\eta_{1}}\cdots\sigma_{\eta_{k-1}}W_{\eta_{k}},\ \forall k\in[2,l].
\end{align*}

\end{thm}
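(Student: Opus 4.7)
The plan is to work inside the infinite rank ambient algebra $\bClAlg_{\infty}$, embed $\bUpClAlg(\rsd(\ueta))$ there via the identification of \cite{qin2024infinite}, and exploit the braid group action on $K_{\infty}\cong \bClAlg_{\infty}$ established in \cite{jang2023braid,kashiwara2024braid}. Inside $K_{\infty}$ there are canonical ``standard'' fundamental elements attached to each color $a\in J$, and these play the role of the $W_{\eta_{k}}$ on the right-hand side of the formula. The base case is immediate: $W_{\eta_{1}}=x_{1}(\rsd(\ueta))$ is the initial cluster variable and, as already stated before the theorem, coincides with the chosen generator $W_{\eta_{1}}\in K_{\infty}$, corresponding to the empty braid word.

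The heart of the argument is a one-step recursion matching the left reflection operation on signed words (Section~\ref{subsec:Operations-on-signed}) with the braid generator action on $K_{\infty}$. Concretely, let $\ueta'=(\eta_{2},\ldots,\eta_{l})$ be the word obtained by dropping the first letter. I would relate the seed $\rsd(\ueta)$ to $\rsd(\ueta')$ using a left reflection (combined with freezing of the new leftmost fundamental variable if needed), and then claim that, viewed inside $K_{\infty}$, the fundamental variable $W_{k}$ of $\rsd(\ueta)$ and the corresponding fundamental variable $W'_{k}$ of $\rsd(\ueta')$ are related by $W_{k}=\sigma_{\eta_{1}}W'_{k}$ for every relevant $k$. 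Granted this one-step identity, the theorem follows by induction on the length of $\ueta$: applying the induction hypothesis to the shorter word $\ueta'$ gives $W'_{k}=\sigma_{\eta_{2}}\cdots\sigma_{\eta_{k-1}}W_{\eta_{k}}$, and composing with $\sigma_{\eta_{1}}$ yields the asserted formula.

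The main obstacle I expect is establishing the one-step compatibility $W_{k}=\sigma_{\eta_{1}}W'_{k}$. This requires combining three ingredients: the explicit description of the embedding $\bUpClAlg(\rsd(\ueta))\hookrightarrow K_{\infty}$ from \cite{qin2024infinite}; the definition of the braid generator's action on the Bosonic extension from \cite{jang2023braid,kashiwara2024braid}; and the tropical bookkeeping that tracks how $g$-vectors transform under left reflection. A clean route is to verify that both sides are compatibly pointed with the same degree in a common seed. On the $W_{k}$ side the degree is $f_{k}-f_{k[-1]}$ by Lemma~\ref{lem:interval-var-deg}; on the right-hand side one computes the tropicalization of $\sigma_{\eta_{1}}$ acting on the $g$-vector of $W'_{k}$ and checks that the answers agree. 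Once the $g$-vectors match and both elements are shown to lie in $\upClAlg(\rsd(\ueta))$ (using that $\sigma_{\eta_{1}}$ preserves cluster-algebra structures on $K_{\infty}$), the uniqueness of compatibly $[m]$-pointed cluster monomials (Lemma~\ref{lem:tropical-to-cluster-monomial}) forces equality, completing the induction step.
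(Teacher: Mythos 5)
Your high-level strategy---embed in $K_\infty\cong\bClAlg_\infty$, use the braid group action from \cite{jang2023braid,kashiwara2024braid}, and induct on word length---is in the right spirit, but the inductive step as written contains a concrete error and a significant unaddressed gap.

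The concrete error: you propose to relate $\rsd(\ueta)$ to $\rsd(\ueta')$ (where $\ueta'$ drops the first letter of $\ueta$) ``using a left reflection.'' Left reflection on signed words replaces $\ubi=(\bi_r,\ubi_{[r+1,s]})$ by $(-\bi_r,\ubi_{[r+1,s]})$, and the paper states explicitly that $\rsd(\ubi')=\rsd(\ubi)$ under this operation: left reflection fixes $\rsd$, it does not delete the leftmost vertex or change the underlying seed. Nor can you fall back on the good-subseed relation: whenever $\eta_1$ recurs in $\ueta$ (e.g.\ $b_{1,1[1]}\neq 0$ with $1[1]$ unfrozen, or the off-diagonal entries $b_{1,k}$ with $1<k<1[1]$), the condition $b_{jk}=0$ for $j\in I'\setminus I$, $k\in I_{\ufv}$ fails, so $\rsd(\ueta')$ is not a good subseed of $\rsd(\ueta)$ and no containment of cluster algebras follows. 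The seed-level move driving your one-step recursion therefore does not exist as described.

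The gap: even with a correct relation between the two seeds in hand, the identity $W_k=\sigma_{\eta_1}W'_k$ asks the Lusztig-type braid symmetry $\sigma_{\eta_1}$ on the Bosonic extension to carry a fundamental variable of the cluster structure attached to $\ueta'$ to the corresponding fundamental variable of the one attached to $\ueta$. That compatibility between the abstract braid action on $K_\infty$ and the specific windows of cluster structures, with exactly the asserted index shift, is the actual content of the theorem; your induction silently assumes it. Likewise, invoking Lemma \ref{lem:tropical-to-cluster-monomial} presupposes $\sigma_{\eta_1}W'_k\in\upClAlg(\rsd(\ueta))$ and compatible pointedness at the relevant seeds, neither of which you establish. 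Finally, ``the tropicalization of $\sigma_{\eta_1}$'' is not a priori meaningful for an abstract algebra automorphism of $K_\infty$; one must first prove that $\sigma_{\eta_1}$ sends pointed elements to pointed elements with a computable $g$-vector transformation, which in practice reduces to matching the explicit formulas for the braid action on generators of $K_\infty$ in \cite{kashiwara2024braid} against the explicit image of $W_k$ under the embedding from \cite{qin2024infinite}. That matching, rather than a soft compatibly-pointed argument, is what the proof actually needs, and your proposal restates the desired conclusion more than it verifies it.
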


\section{Extension to infinite ranks}\label{sec:Extension-to-infinite}

\subsection{Colimit of seeds}

\begin{defn}[{\cite{qin2024infinite}}]

We say $\sd$ is a good subseed of $\sd'$, if we have $I\subset I'$,
$I_{\ufv}\subset I'_{\ufv}$, $\sym_{i}=\sym'_{i}$ $\forall i\in I$,
$\tB=(\tB')_{I\times I_{\ufv}}$, and $b'_{jk}=0$ $\forall(j,k)\in(I'\backslash I)\times I_{\ufv}$. 

If $\sd$ and $\sd'$ are quantum seeds, we further require $\Lambda=(\Lambda')_{I\times I}$.

\end{defn}

Now, let $\sd$ be a good subseed of $\sd'$. View $\Mcirc$ as a
sublattice of $\Mcirc'$ naturally. We obtain natural inclusions $\LP\subset\LP'$
and $\cF\subset\cF'$. We further have the following results.

\begin{prop}[{\cite[Section 3.1]{qin2023analogs}}]

We have $\bClAlg\subset\bClAlg'$ and $\bLP(\seq\sd)\subset\bLP(\seq\sd')$
for any mutation sequence $\seq$ on $I_{\ufv}$. Moreover, if $|I'|<\infty$
and $\sd'$ satisfies the injectivity assumption, we have $\bUpClAlg\subset\bUpClAlg'$.

\end{prop}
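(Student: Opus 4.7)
The plan is to propagate the good-subseed relation through mutations at vertices in $I_{\ufv}$, then extract each of the three claims in turn. The key observation is that for $k\in I_{\ufv}$ the column $\col_{k}\tB'$ is supported on $I$, since $b'_{jk}=0$ for $j\in I'\setminus I$, and on $I$ it agrees with $\col_{k}\tB$. The exchange relation at $k$ in $\sd'$ therefore matches the one in $\sd$, so the new cluster variables agree: $x_{k}(\mu_{k}\sd)=x_{k}(\mu_{k}\sd')$ as elements of $\cF'$. A direct check using the $B$-matrix mutation formula shows $\mu_{k}\sd$ is again a good subseed of $\mu_{k}\sd'$. Iterating, for any mutation sequence $\seq$ on $I_{\ufv}$, the seed $\seq\sd$ is a good subseed of $\seq\sd'$ with $x_{i}(\seq\sd)=x_{i}(\seq\sd')$ for all $i\in I$.

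The first two inclusions follow immediately: the polynomial ring $\bLP(\seq\sd)=\kk[x_{i}(\seq\sd)]_{i\in I}$ is a subring of $\bLP(\seq\sd')=\kk[x_{i}(\seq\sd')]_{i\in I'}$ since the generators of the former are among those of the latter. Taking unions of generators over all such $\seq$ then yields $\bClAlg(\sd)\subset\bClAlg(\sd')$.

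For the third claim I would take $f\in\bUpClAlg(\sd)$ and use the valuation characterization $\bUpClAlg(\sd')=\{z\in\upClAlg(\sd') : \nu_{j'}(z)\geq 0,\ \forall j'\in I'_{\fv}\}$. Under the hypotheses $|I'|<\infty$ and injectivity on $\sd'$, the classical starfish/separation theorem reduces verifying $f\in\upClAlg(\sd')$ to the finite list of conditions $f\in\LP(\sd')$ and $f\in\LP(\mu_{k}\sd')$ for $k\in I'_{\ufv}$. The cases $k\in I_{\ufv}$ are handled by the first two claims; for $k\in I'\setminus I$, $f$ is independent of $x_{k}$ and the condition is automatic; the subtle case is $k\in I_{\fv}$, where I would substitute the exchange relation $x_{k}=(M_{1}+M_{2})/x_{k}(\mu_{k}\sd')$ and use that $f\in\bLP(\sd)$ has only non-negative powers of $x_{k}$ to land in $\LP(\mu_{k}\sd')$. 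The valuation condition is straightforward: $j'\in I'_{\fv}\cap I$ forces $j'\in I_{\fv}$ since $I_{\ufv}\subset I'_{\ufv}$, so $\nu_{j'}(f)\geq 0$ by hypothesis, while $j'\in I'\setminus I$ gives $\nu_{j'}(f)=0$ since $f$ does not involve $x_{j'}$. The main obstacle is the subtle case $k\in I_{\fv}$: the argument depends essentially on the polynomial (rather than Laurent) structure guaranteed by $f\in\bLP(\sd)$, which clarifies why the partially compactified variants $\bUpClAlg$ and $\bClAlg$ appear in the statement rather than their localized versions.
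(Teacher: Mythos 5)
Your proposal is correct and follows the natural route that the cited reference almost surely takes: verify that mutations at $k\in I_{\ufv}$ preserve the good-subseed relation (and yield identical exchange relations since $\col_{k}\tB'$ is supported on $I$ and agrees there with $\col_{k}\tB$), deduce the $\bLP$- and $\bClAlg$-inclusions, and then invoke the finite-rank starfish lemma together with the valuation description of $\bUpClAlg$ for the third claim. The case analysis is right, and your remark that for $k\in I_{\fv}\cap I'_{\ufv}$ only the non-negative powers of $x_{k}$ guaranteed by $f\in\bLP(\sd)$ make the substitution of the exchange relation land in $\LP(\mu_{k}\sd')$ is exactly the point that explains why the statement concerns the partially compactified algebras rather than the localized ones.
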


Let there be a chain of quantum or classical seeds $(\sd_{r})_{r\in\N}$
such that $\sd_{r}$ is a good subseed of $\sd_{r+1}$. 

First assume $\sd_{r}$ are classical seed. Then we could always construct
their colimit seed $\sd_{\infty}$ such that $I(\sd_{\infty}):=I_{\infty}:=\cup_{r}I(\sd_{r})$,
$I_{\ufv}(\sd_{\infty}):=\cup_{r}I_{\ufv}(\sd_{r})$, $\sym_{i}=\sym_{i}(\sd_{r})$
when $i\in I(\sd_{r})$, and $\tB=(b_{ij})_{i\in I_{\infty},j\in I_{\ufv}(\sd_{\infty})}$
such that $b_{ij}=\tB(\sd_{r})_{ij}$ when $(i,j)\in I(\sd_{r})\times I_{\ufv}(\sd_{r})$. 

Next assume $\sd_{r}$ are quantum seed. If we can equip the classical
seed $\sd_{\infty}$ with a compatible Poisson structure $\lambda(\sd_{\infty})$,
such that $\Lambda(\sd_{r})=\Lambda(\sd_{\infty})_{I(\sd_{r})\times I(\sd_{r})}$
for any $r$, we call the corresponding quantum seed the colimit $\sd_{\infty}$. 

\subsection{Extension of quantizations}

If $\sd$ is a good subseed of $\sd'$ as classical seeds, it is nontrivial
to extend a compatible Poisson structure on $\sd$ to that of $\sd'$
such that $\sd$ becomes a good subseed of $\sd'$ as a quantum seed.
Nevertheless, we could restrict on special cases that suffice for
many applications.

\begin{defn}

A matrix $(b_{ij})$ is said to be connected if, $\forall(i,j)$,
there exists a sequence of letters $i_{0}=i,i_{1},\ldots,i_{s-1},i_{s}=j$,
such that $b_{i_{r}i_{r+1}}\neq0$, $\forall r\in[0,s-1]$.

\end{defn}

Assume that we have a partition $I'=I_{1}\sqcup I_{2}\sqcup I_{3}$
such that $I'_{\ufv}=I=I_{1}\sqcup I_{2}$, $I'_{\fv}=I_{3}$, $I_{\ufv}=I_{1}$,
$I_{\fv}=I_{2}$. Then we have $\tB_{I_{3},I_{1}}=0$ since $\sd$
is a good subseed of $\sd'$.

\begin{lem}\label{lem:extend-quantization}

Assume that $\tB_{I'_{\ufv},I'_{\ufv}}$ and $\tB_{I_{\ufv},I_{\ufv}}$
are connected, $\tB_{I_{3},I_{2}}$ is of full rank. Then we can uniquely
extends the compatible Poisson structure $\lambda$ for $\sd$ to
a compatible Poisson structure $\lambda'$ for $\sd'$, such that
$\Lambda=\Lambda'_{I\times I}$.

\end{lem}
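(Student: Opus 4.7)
The plan is to decompose $\Lambda'$ in block form indexed by the partition $I' = I \sqcup I_3$, with the $I \times I$ block forced to equal the given $\Lambda$, and to determine the remaining blocks $X := \Lambda'_{I, I_3}$ and the skew-symmetric $Y := \Lambda'_{I_3, I_3}$ from the compatibility equation $\Lambda' \tB' = -D'$, where $D'$ is the $I' \times I$ matrix diagonal in the scalars $\diag'_k$. Using the good-subseed relations $\tB'_{I, I_1} = \tB$ and $\tB'_{I_3, I_1} = 0$, I would expand $\Lambda' \tB'$ into four block equations indexed by row blocks $I, I_3$ and column blocks $I_1, I_2$.

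The $(I, I_1)$-block equation reduces to $\Lambda \tB$, which matches the required form by the given compatibility of $(\Lambda, \tB)$, forcing $\diag'_k = \diag_k$ for $k \in I_1$. The $(I_3, I_1)$-block yields the linear constraint $X^T \tB = 0$. The $(I, I_2)$-block reads
\[
X \tB'_{I_3, I_2} \;=\; -\Lambda \tB'_{I, I_2} \;-\; \widehat{D}'_{I_2},
\]
where $\widehat{D}'_{I_2}$ is the $I \times I_2$ matrix carrying $-\diag'_k$ on its $k \in I_2$ diagonal and zero elsewhere. The $(I_3, I_2)$-block reads $Y \tB'_{I_3, I_2} = X^T \tB'_{I, I_2}$.

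The central step is to invoke the full-rank hypothesis on $\tB'_{I_3, I_2}$: it makes right-multiplication by $\tB'_{I_3, I_2}$ injective on matrices with column index $I_3$, so the $(I, I_2)$-block uniquely determines $X$ and the $(I_3, I_2)$-block uniquely determines $Y$ once the right-hand sides are specified. Splitting the $(I, I_2)$-block by rows, the $I_1$-rows contain no unknown diagonal contribution and thus directly determine $X_{I_1, :}$; the $I_2$-rows then read off $\diag'_k$ for $k \in I_2$ from the diagonal part and determine $X_{I_2, :}$ from the off-diagonal part. Positivity of these $\diag'_k$ comes from the two connectivity assumptions: the $\sym$-skew-symmetrizability of $\tB'$ forces the ratio $\diag'_k / \sym_k$ to be constant on connected components of the principal part $\tB'_{I'_\ufv, I'_\ufv}$, and the connectivity of both $\tB'_{I'_\ufv, I'_\ufv}$ and $\tB_{I_\ufv, I_\ufv}$ pins this ratio to the strictly positive value already fixed on $I_1$ by $\diag_{I_1}$.

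The main obstacle I anticipate is verifying the consistency of the overdetermined linear system: the $X$ produced by the $(I, I_2)$-block must automatically satisfy the constraint $X^T \tB = 0$ coming from the $(I_3, I_1)$-block, and the $Y$ produced by the $(I_3, I_2)$-block must be genuinely skew-symmetric. Both should follow by transposing each block equation with the identity $\sym_i b'_{ij} = -\sym_j b'_{ji}$ and then collapsing the resulting expressions against the known compatibility $\Lambda \tB = -\diag_{I_1}|_I$; tracking this bookkeeping carefully is where the connectivity assumptions are genuinely used rather than merely invoked, and is the technical heart of the argument.
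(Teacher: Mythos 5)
Your block-decomposition framework is the right one, and you correctly identify the key mechanism: skew-symmetrizability forces $\diag'_k/\sym_k$ to be constant on connected components of the principal part, and the two connectivity hypotheses glue that constant to the value $\rho=\diag_j/\sym_j$ already fixed on $I_1$. But you misplace this observation logically. You invoke it only to argue \emph{positivity} of $\diag'_k$, while claiming the \emph{values} are read off ``from the diagonal part'' of the $(I_2,I_2)$-block. That read-off cannot work: the $(I_2,I_2)$-block carries $|I_2|^2$ scalar equations in the $|I_2||I_3|+|I_2|$ unknowns $X_{I_2,I_3}$ and $D_2:=\mathrm{diag}(\diag'_k)_{k\in I_2}$, and every entry of $X_{I_2,I_3}\tB'_{I_3,I_2}$ mixes all of $X_{I_2,I_3}$, so there is no diagonal/off-diagonal decoupling. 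The correct order is: the constancy of $\diag'_k/\sym_k$ alone \emph{determines} $\diag'_k=\rho\sym_k$ for all $k\in I_2$, before any linear system is touched; only then is the right-hand side $C=-\Lambda\tB'_{I,I_2}+\bigl(\begin{smallmatrix}0\\-D_2\end{smallmatrix}\bigr)$ known and $X$ can be solved for.

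Two further gaps. You use the full-rank hypothesis on $\tB'_{I_3,I_2}$ only for injectivity of right-multiplication, which gives uniqueness of $X$ and $Y$ but not existence; for the lemma's existence claim you also need surjectivity, i.e.\ $\tB'_{I_3,I_2}$ square and invertible, which is indeed what happens in the applications (Example \ref{eg:uniqe-A1-quantization}), but should be stated. And the two consistency checks --- that the $X$ produced from the $(I,I_2)$-block also satisfies $\tB^{\mathrm T}X=0$ from the $(I_3,I_1)$-block, and that the $Y$ solving $Y\tB'_{I_3,I_2}=X^{\mathrm T}\tB'_{I,I_2}$ is genuinely skew --- are flagged as the ``technical heart'' but never carried out, and they are where the hypotheses are actually consumed. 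Both do hold: one computes $(\tB^{\mathrm T}C)_{jk}=-\diag_j b'_{jk}-b'_{kj}\diag'_k=-\rho(\sym_j b'_{jk}+\sym_k b'_{kj})=0$ for $j\in I_1$, $k\in I_2$, so $\tB^{\mathrm T}X=\tB^{\mathrm T}C(\tB'_{I_3,I_2})^{-1}=0$; and skew-symmetry of $Y$ reduces to $(\tB'_{I,I_2})^{\mathrm T}C$ being skew, which after using skew-symmetry of $(\tB'_{I,I_2})^{\mathrm T}\Lambda\tB'_{I,I_2}$ comes down to $(\tB'_{I_2,I_2})^{\mathrm T}D_2$ being skew, again a consequence of $\diag'_k=\rho\sym_k$ and $\sym_j b'_{jk}=-\sym_k b'_{kj}$. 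So the plan is salvageable, but as written the central verifications are missing and the determination order for $D_2$ is incorrect.
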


\begin{eg}\label{eg:uniqe-A1-quantization}

We can solve the compatible Poisson form $\lambda$ in Example \ref{eg:GHL-A1-inf-quantize}
such that $\diag_{k}=2$. Let $\lambda$ be such a Poisson form and
$\Lambda$ its matrix. Then, for any $0<r\in\N$, we have $\Lambda_{[-r,r]\times[-r,r]}B_{[-r,r]\times[-r+1,r-1]}=\left(\begin{array}{c}
0\\
-2\Id_{2r-1}\\
0
\end{array}\right)$. In particular, when $r=1$, we can obtain a one parameter family
of solutions $\Lambda_{[-1,1]}=\left(\begin{array}{ccc}
0 & 1+c & 0\\
-1-c & 0 & -1+c\\
0 & 1-c & 0
\end{array}\right)$, $\forall c\in\Z$. Using Lemma \ref{lem:extend-quantization} and
increasing $r$ by $1$ each time, we could inductively determine
all $\Lambda_{[-r,r]}$ step by step for all $r>0$. The resulting
matrix $\Lambda$ is given by \eqref{eq:GHL-A1-inf-Lambda} when we
choose $c=0$.

\end{eg}

\subsection{Colimit of algebras}

Now assume $(\sd_{r})_{r\in\N}$ is a chain of quantum or classical
seeds, such that $\sd_{r}$ is a good subseed of $\sd_{r+1}$, and
it has the colimit seed $\sd_{\infty}$. Note that $\LP(\sd_{\infty})=\cup_{r}\LP(\sd_{r})$.
We define the colimit $\bClAlg_{\infty}:=\cup_{r}\bClAlg(\sd_{r})$
and $\bUpClAlg_{\infty}:=\cup_{r}\bUpClAlg(\sd_{r})$

\begin{lem}[{\cite{qin2024infinite}}]\label{lem:extension-colimit}

(1) We have $\bClAlg(\sd_{\infty}):=\bClAlg_{\infty}$.

(2) Assume that $|I(\sd_{r})|<\infty$ and $\sd_{r}$ satisfies the
Injectivity Assumption, $\forall r$. Then we have $\bUpClAlg(\sd_{\infty})=\bUpClAlg_{\infty}$. 

\end{lem}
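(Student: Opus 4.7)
The plan is to handle the two parts separately. Both reduce to the good subseed property (and its transitivity, so that each $\sd_r$ is a good subseed of $\sd_\infty$) together with the cited Proposition from \cite[Section 3.1]{qin2023analogs}.

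For part (1), I argue both inclusions directly. The containment $\bClAlg_\infty \subset \bClAlg(\sd_\infty)$ is immediate: by the cited Proposition each $\bClAlg(\sd_r) \subset \bClAlg(\sd_\infty)$, so their union does too. For the reverse, fix any cluster variable $x_i(\seq\sd_\infty)$ of $\sd_\infty$. The mutation sequence $\seq$ is finite and supported on $I_\ufv(\sd_\infty) = \cup_r I_\ufv(\sd_r)$, so for $r$ large enough $\seq$ lies entirely in $I_\ufv(\sd_r)$, and either $i \in I(\sd_r)$ or $i \notin \supp(\seq)$ (in which case $x_i(\seq\sd_\infty) = x_i(\sd_\infty) = x_i(\sd_{r'})$ for some large $r'$). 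The good subseed property --- specifically the vanishing $b_{jk}(\sd_\infty) = 0$ for $j \in I_\infty \setminus I(\sd_r)$ and $k \in I_\ufv(\sd_r)$, together with skew-symmetrizability which forces $b_{kj}(\sd_\infty) = 0$ as well --- implies that the mutation formula at vertices in $I_\ufv(\sd_r)$ performed in $\sd_\infty$ agrees verbatim with that in $\sd_r$. Hence $x_i(\seq\sd_\infty) = x_i(\seq\sd_r) \in \bClAlg(\sd_r) \subset \bClAlg_\infty$.

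For part (2), the inclusion $\bUpClAlg_\infty \subset \bUpClAlg(\sd_\infty)$ follows from the cited Proposition (applicable because of the Injectivity Assumption on each $\sd_r$) together with the fact that any finite mutation sequence on $\sd_\infty$ is, for $r$ large, a mutation sequence on $\sd_r$. The substantive direction is the reverse inclusion. Take $z \in \bUpClAlg(\sd_\infty)$; since $z \in \bLP(\sd_\infty)$, it is a Laurent polynomial in finitely many variables, indexed by a finite set $F \subset I_\infty$. Exploiting $\cup_r I(\sd_r) = I_\infty$ together with $\cup_r I_\ufv(\sd_r) = I_\ufv(\sd_\infty)$, I choose $r$ large enough that simultaneously $F \subset I(\sd_r)$ and $F \cap I_\ufv(\sd_\infty) \subset I_\ufv(\sd_r)$. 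I then claim $z \in \bUpClAlg(\sd_r)$: for any finite mutation sequence $\seq$ on $I_\ufv(\sd_r)$, the seed $\seq\sd_r$ is a good subseed of $\seq\sd_\infty$, so the Laurent expansion of $z$ in $\bLP(\seq\sd_\infty)$ involves only variables indexed by $I(\sd_r)$ and therefore yields an expansion in $\bLP(\seq\sd_r)$. The exponents at $j \in I_\fv(\sd_r) \cap I_\fv(\sd_\infty)$ are non-negative because $z \in \bLP(\seq\sd_\infty)$, and for $j \in I_\fv(\sd_r) \cap I_\ufv(\sd_\infty)$ the choice of $r$ forces $j \notin F$, so the $x_j$-exponent of $z$ is trivially zero.

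The main obstacle is the reverse inclusion in part (2), specifically coordinating the frozen/unfrozen roles of vertices between $\sd_r$ and $\sd_\infty$. A vertex $j$ may be frozen in $\sd_r$ yet unfrozen in $\sd_\infty$, and for such $j$ the hypothesis $z \in \bUpClAlg(\sd_\infty)$ provides no a priori non-negativity of the $x_j$-exponent of $z$; without care, the conclusion $z \in \bLP(\seq\sd_r)$ could fail at such $j$. The key device is to choose $r$ large enough so that every vertex in $\supp z$ that is unfrozen in the limit is also unfrozen in $\sd_r$, thereby eliminating the problematic overlap $F \cap I_\fv(\sd_r) \cap I_\ufv(\sd_\infty)$. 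This is available precisely because $\supp z$ is finite and the chain $I_\ufv(\sd_r)$ exhausts $I_\ufv(\sd_\infty)$ by construction of the colimit.
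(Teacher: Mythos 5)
The overall structure is correct and the central device in part (2) --- absorbing the overlap $F\cap I_\fv(\sd_r)\cap I_\ufv(\sd_\infty)$ by increasing $r$ so that it misses the support of $z$ --- is exactly the right idea. There is, however, one step that is stated too casually and hides the real content. In the reverse inclusion of (2), for $j\in I_\fv(\sd_r)\cap I_\ufv(\sd_\infty)$ you assert that the $x_j$-exponent of $z$ in $\bLP(\seq\sd_\infty)$ "is trivially zero" because $j\notin F$. This is neither trivial nor literally true: although $z$ involves no $x_j$ in $\bLP(\sd_\infty)$, a mutation at some $k\in I_\ufv(\sd_r)$ with $b_{jk}\neq 0$ rewrites $x_k$ via an exchange relation whose monomials contain $x_j$ to positive powers, so $z$ re-expanded in $\bLP(\seq\sd_\infty)$ can genuinely involve $x_j'$ with positive exponent. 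What you actually need, and what is true, is only that these exponents are $\geq 0$; this follows from the mutation-invariance of the valuation $\nu_j$ on $\cF$ (the Lemma the paper cites as \cite[Lemma 2.12]{qin2023analogs}): since $\nu_j(z)=0$ already in the initial seed, one has $\nu_j(z)=0$ after mutating at vertices $\neq j$, hence non-negative $x_j'$-exponents and $z\in\bLP(\seq\sd_r)$. That citation should replace "trivially zero" --- the point is a standard but non-vacuous appeal to the valuation machinery, not bookkeeping. Similarly, your intermediate claim that the Laurent expansion of $z$ in $\bLP(\seq\sd_\infty)$ involves only $I(\sd_r)$-variables deserves an explicit one-line induction from the good-subseed vanishing $b_{jk}(\sd_\infty)=0$ for $j\notin I(\sd_r)$, $k\in I_\ufv(\sd_r)$ (and the observation that this vanishing is preserved by mutation at $I_\ufv(\sd_r)$), which you use but leave implicit. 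With those two repairs the argument is sound; part (1) as written is fine.
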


\begin{thm}[Starfish Lemma]\label{thm:starfish-inf}

Under the assumption in Lemma \ref{lem:extension-colimit}(2), we
have 
\begin{align*}
\bUpClAlg(\sd_{\infty}) & =\bLP(\sd_{\infty})\cap\left(\cap_{k\in I_{\ufv}(\sd_{\infty})}\bLP(\mu_{k}\sd_{\infty})\right).
\end{align*}

\end{thm}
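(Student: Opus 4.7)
The plan is to reduce the infinite-rank statement to the finite-rank Starfish Lemma of \cite{BerensteinFominZelevinsky05} and \cite{BerensteinZelevinsky05}, applied to a sufficiently large good subseed $\sd_{r_0}$ in the chain. The inclusion $\bUpClAlg(\sd_\infty) \subseteq \bLP(\sd_\infty) \cap \bigcap_{k \in I_\ufv(\sd_\infty)} \bLP(\mu_k \sd_\infty)$ will be immediate from the definition of $\bUpClAlg$, since $\sd_\infty$ and every $\mu_k \sd_\infty$ lie in the mutation class $\Delta^+_{\sd_\infty}$; the work is the reverse inclusion.

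For that reverse inclusion, I would take $z$ in the right-hand side and aim to show $z \in \bUpClAlg(\sd_r)$ for some finite $r$; the conclusion then follows from Lemma \ref{lem:extension-colimit}(2). To locate $r$, I first note that any element of the colimit $\bLP(\sd_\infty) = \bigcup_r \bLP(\sd_r)$ involves only finitely many variables, so I can pick $r_0$ with $z \in \bLP(\sd_{r_0})$. The heart of the argument is then to upgrade the given membership $z \in \bLP(\mu_k \sd_\infty)$, for $k \in I_\ufv(\sd_\infty)$, to $z \in \bLP(\mu_k \sd_{r_0})$ for every $k \in I_\ufv(\sd_{r_0})$. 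The key input is the good-subseed relation $b_{jk}(\sd_\infty) = 0$ for $j \in I_\infty \setminus I_{r_0}$ and $k \in I_\ufv(\sd_{r_0})$: a direct computation with the mutation formula shows that $\mu_k \sd_{r_0}$ is itself a good subseed of $\mu_k \sd_\infty$, and that the exchange monomials defining $x'_k(\mu_k \sd_\infty)$ involve only indices in $I_{r_0}$, so that $x'_k(\mu_k \sd_\infty) = x'_k(\mu_k \sd_{r_0})$ in the ambient skew field. Hence the change of variables from the $\sd_\infty$-cluster to the $\mu_k \sd_\infty$-cluster does not introduce any variable outside $I_{r_0}$ into the Laurent expansion of $z$, and $z \in \bLP(\mu_k \sd_{r_0})$ follows.

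Once both conditions are in place, the finite-rank Starfish Lemma applied to $\sd_{r_0}$ (valid because $|I(\sd_{r_0})| < \infty$ and $\sd_{r_0}$ satisfies the Injectivity Assumption by hypothesis) yields $z \in \bUpClAlg(\sd_{r_0})$, and Lemma \ref{lem:extension-colimit}(2) then places $z$ in $\bUpClAlg(\sd_\infty)$, closing the argument.

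The main obstacle I anticipate is the middle step: the careful verification that mutation at $k \in I_\ufv(\sd_{r_0})$ preserves the good-subseed structure and that the exchange substitution \eqref{eq:exchange-relation} keeps the support of $z$ inside $I_{r_0}$. This is a bookkeeping check with the mutation rule for $\tB$ combined with the vanishing $b_{jk}(\sd_\infty) = 0$ for $j \notin I_{r_0}$, but it is precisely what makes the reduction to the finite-rank case work; the rest of the proof is a direct application of known results and of Lemma \ref{lem:extension-colimit}.
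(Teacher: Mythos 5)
Your proof is correct and is the natural reduction to the finite-rank Starfish Lemma: the good-subseed vanishing $b_{jk}(\sd_\infty)=0$ for $j\notin I_{r_0}$, $k\in I_\ufv(\sd_{r_0})$ makes $\mu_k\sd_{r_0}$ a good subseed of $\mu_k\sd_\infty$, keeps the exchange monomial for $x'_k$ supported in $I_{r_0}$, and hence gives $\cF(\mu_k\sd_{r_0})\cap\bLP(\mu_k\sd_\infty)=\bLP(\mu_k\sd_{r_0})$, after which the finite-rank result applied to $\sd_{r_0}$ together with Lemma \ref{lem:extension-colimit}(2) close the argument. The paper does not display a proof here (the statement is imported from \cite{qin2024infinite}, with the finite-rank case credited to \cite{BerensteinFominZelevinsky05,BerensteinZelevinsky05}), so you are supplying exactly the expected reduction step.
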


Theorem \ref{thm:starfish-inf} was previously known as the starfish
lemma when $|I|<\infty$ \cite{BerensteinFominZelevinsky05,BerensteinZelevinsky05}.

Now, let $\alg$ denotes $\bClAlg$ or $\bUpClAlg$. Assume $\alg(\sd_{r})$
has the $\kk$-basis $\base_{r}$, such that $\base_{r+1}\cap\alg(\sd_{r})=\base_{r}$.
Denote $\base_{\infty}=\cup_{r}\base_{r}$. Then it is a basis of
$\alg_{\infty}=\cup_{r}\alg(\sd_{r})$. Now we could extend our constructions
of common triangular bases to infinite ranks, despite that $\sd_{\infty}$
is not injective reachable in general.

\begin{defn}

Assume that we have $\alg(\sd_{\infty})=\cup_{r}\alg(\sd_{r})$. If
$\base_{r}$ are the triangular basis of $\alg(\sd_{r})$ with respect
to $\sd_{r}$ (resp. the common triangular basis), then $\base_{\infty}$
is called the triangular basis of $\alg(\sd_{\infty})$ with respect
to $\sd_{\infty}$ (resp. the common triangular basis).

\end{defn}

\begin{eg}\label{eg:A1-quantization-unique}

For $r\in\N$, denote $I_{r}=[-1-r,1+r]$ and $(I_{r})_{\fv}=\{-1-r,1+r\}$.
Denote the (classical or quantum) seed in Examples \ref{eg:GHL-A1-inf}
\ref{eg:GHL-A1-inf-quantize} by $\sd_{\infty}$. Let $\sd_{r}$ denote
the seed obtained from $\sd_{\infty}$ by restricting to the set of
vertices $I_{r}$, where the frozen vertices are in $(I_{r})_{\fv}$.
The associated ice quiver, denoted $\tQ_{r}$, is the full subquiver
on the vertices $[-1-r,1+r]$ of the ice quiver of Figure \ref{fig:A1-GHL},
such that $\{-1-r,1+r\}$ becomes frozen.

Note that we can identify $\sd_{r}$ with the seed $\dsd(\ubi)$ where
$\ubi=(\bi_{k})_{k\in[-r,1+r]}=((1)^{r},1,-1,(1)^{r})$ where $\bi_{1}=-1$.
$(\sd_{r})_{r\in\N}$ is a chain of seeds such that $\sd_{r}$ is
a good subseed of $\sd_{r+1}$, and we have $\bUpClAlg(\sd_{\infty})=\cup_{r\in\N}\bUpClAlg(\sd_{r})$
by Lemma \ref{lem:extension-colimit}. The common triangular basis
$\can_{r}$ of $\bUpClAlg(\sd_{r})$ extends to the common triangular
basis $\can_{\infty}$ of $\bUpClAlg(\sd_{\infty})$.

\end{eg}

\appendix

\section{Examples of Cluster algebras from Lie theory}\label{sec:Examples-of-Cluster}

\subsection{Cluster structures on $\kk[G^{u,w}]$ and $\kk[G]$}\label{subsec:Cluster-structures-on-G}

\subsubsection*{Root systems and generalized minors}

Following \cite[Section 2]{BerensteinFominZelevinsky05}\cite{Humphreys72},
we review basics of root systems and generalized minors for convenience
of the reader. 

Let $G$ be a simple, semisimple, complex algebra group of rank $|J|$.
Let $B$, $B_{+}$ be two opposite Borel subgroups. Let $N$ and $N_{-}$
be their unipotent radicals. Take the maximal torus $H=B\cap B_{-}$.

Let $\frg=\Lie(G)$ be the Lie algebra of $G$. $\frh=\Lie(H)$ is
the Cartan subalgebra. Recall that we have the exponential map $\exp:\frh\rightarrow G$.
Let $\Pi=\{\alpha_{a}|a\in J\}\subset\frh^{*}$ be the set of simple
roots. Denote the Cartan matrix by $C=(C_{ab})_{a,b\in J}$, where
$C_{ab}=\frac{2(\alpha_{a},\alpha_{b})}{(\alpha_{a},\alpha_{a})}$.
Let $\{\alpha_{a}^{\vee}|a\in J\}\subset\frh$ be the set of simple
coroots, such that $C_{ab}=\langle\alpha_{a}^{\vee},\alpha_{b}\rangle$.

The fundamental weights $\varpi_{a}\in\frh$ satisfy $\langle\alpha_{a}^{\vee},\varpi_{b}\rangle=\delta_{ab}$,
$\forall a,b\in J$. Equivalently, $s_{a}(\varpi_{b})=\varpi_{b}-\delta_{ab}\alpha_{a}$.

The Weyl group is $\Norm_{G}(H)/H$. We will identify it with the
corresponding Coxeter group $W$ generated by $s_{a}$, $a\in J$
(see Section \ref{subsec:Operations-on-signed}). The conjugation
action of the Weyl group on $H$ induces the action of $W$ on $\frh^{*}$,
such that the simple reflection is given by $s_{a}(\beta)=\beta-\langle\alpha_{i}^{\vee},\beta\rangle\alpha_{i}$,
$\forall\beta\in\frh^{*}$.

For any $a\in J$, let $\phi_{a}$ denote the $SL_{2}$-embedding
which sends upper (resp. lower) triangular matrices into $B$ (resp.
$B_{-}$). We identify $s_{a}$ with $\os_{a}H$ whose representative
is chosen as
\begin{align*}
\os_{a} & :=\phi_{a}\left(\begin{array}{cc}
0 & -1\\
1 & 0
\end{array}\right)\in G.
\end{align*}
For any $g=s_{a_{1}}\cdots s_{a_{r}}\in W$, let us denote $\og=\os_{a_{1}}\cdots\os_{a_{r}}$.
We can also identify $s_{a}$ with $H\dos_{a}$ where $\dos_{a}:=(\os_{a})^{-1}$,
and denote $\dog:=\dos_{a_{1}}\cdots\dos_{a_{r}}$. Note that $(\og)^{-1}=\doverline{g^{-1}}$. 

For any $g\in N_{-}HN$, denote the factorization $g=[g]_{-}[g]_{0}[g]_{+}$
such that $[g]_{-}\in N_{-}$, $[g]_{0}\in H$, $[g]_{+}\in N$.

Each weight $\gamma=\sum\gamma_{a}\varpi_{a}$, $\gamma_{a}\in\Z$,
determines a character $(\ )^{\gamma}$ on $H$ such that $(\exp(x))^{\gamma}:=e^{\langle x,\gamma\rangle}$,
$\forall x\in\frh$. $\forall u,w\in W$, we define the generalized
minor $\Delta_{u\varpi_{a},w\varpi_{a}}$ to be the regular function
on $G$ such that its value on $g\in\ou N_{-}HN\ow^{-1}$ is given
by: 
\begin{align*}
\Delta_{u\varpi_{a},w\varpi_{a}}(g) & =([\ou^{-1}g\ow]_{0})^{\varpi_{a}}.
\end{align*}
The function $\Delta_{u\varpi_{a},w\varpi_{a}}$ depends only on $(u\varpi_{a},w\varpi_{a})$,
and not on the choice of $(u,v)$. 

When we work at the quantum level ($\kk=\Q(v)$), we let $\kk[G]$
denote the quantized coordinate ring of $G$; see \cite{Kashiwara93}.
We also use $\Delta_{u\varpi_{a},w\varpi_{a}}$ to denote the corresponding
\emph{generalized quantum minor}; see \cite{BerensteinZelevinsky05}.

We will also consider the double Bruhat cell $G^{u',w'}=Bu'B\cap B_{-}w'B_{-}$.
Note that $G^{w_{0},w_{0}}$ is an open dense subset of $G$. We let
$\kk[G^{u',w'}]$ denote the (classical or quantized) coordinate ring
of $G^{u,w}$; see \cite{BerensteinFominZelevinsky05}\cite{BerensteinZelevinsky05}\cite{goodearl2016berenstein}.
By abuse of notation, we let $\Delta_{u\varpi_{a},w\varpi_{a}}$ denote
the restriction of the generalized (classical or quantum) minor on
$\kk[G^{u,w}]$.

\begin{eg}[Type $A_n$]

We briefly recall constructions for $G=SL_{n}(\C)$ following \cite{Humphreys72}.
\begin{itemize}
\item Choose the Euclidean vector space $\R^{n+1}$ with standard basis
$\{\epsilon_{a}|a\in[1,n+1]\}$ and the standard inner product $(\ ,\ )$. 
\item Choose the simple roots $\alpha_{a}=\epsilon_{a}-\epsilon_{a+1}$,
$\forall a\in[1,n]$. They span the subspace $\frh^{*}\subset\R^{l+1}$.
The Cartan matrix $C$ is given by $C_{ab}=\frac{2(\alpha_{a},\alpha_{b})}{(\alpha_{a},\alpha_{a})}$. 
\item The simple reflection $s_{a}$ is the reflection of $\R^{n+1}$ with
respect to the hyperplane whose normal vector is $\alpha_{a}$. In
the standard basis, it is represented by the following matrix 
\begin{align*}
\os_{a} & =\left(\begin{array}{cccccc}
\cdots\\
 & 1\\
 &  & 0 & -1\\
 &  & 1 & 0\\
 &  &  &  & 1\\
 &  &  &  &  & \cdots
\end{array}\right)
\end{align*}
 such that $\os_{a}(\epsilon_{a})=\epsilon_{a+1}$ and $\os_{a}(\epsilon_{a+1})=-\epsilon_{a}$.
\item Let $\{\epsilon_{a}^{\vee}|a\in[1,n+1]\}$ be the standard basis in
the dual space $(\R^{n+1})^{*}$. We have $\alpha_{a}^{\vee}=\epsilon_{a}^{\vee}-\epsilon_{a+1}^{\vee}$.
\item We can calculate $\varpi_{a}$ explicitly. For example, when $n=1$,
we have $\varpi_{1}=\frac{1}{2}\alpha_{1}=\frac{1}{2}(\epsilon_{1}-\epsilon_{2})$;
when $n=2$, we have $\varpi_{1}=\frac{1}{3}(2\alpha_{1}+\alpha_{2})=\frac{1}{3}(2\epsilon_{1}-\epsilon_{2}-\epsilon_{3})$,
$\varpi_{2}=\frac{1}{3}(\alpha_{1}+2\alpha_{2})=\frac{1}{3}(\epsilon_{1}+\epsilon_{2}-2\epsilon_{3})$.
\end{itemize}
Choose the maximal torus $H$ of $G=SL_{n+1}$ to be the subgroup
of diagonal matrices, and $N$ (reps. $N_{-}$) to be the subgroups
of unipotent upper (resp. lower) triangular matrices. Identify $\epsilon_{a}^{\vee}$
with the size $(n+1)$ elementary matrix $E_{aa}$, whose $(b,c)$-entries
are $(E_{aa})_{bc}=\delta_{ab}\delta_{ac}$, $\forall b,c\in[1,n+1]$.
Then we have $\exp(x)\in H$ for $x=\sum x_{i}\alpha_{i}^{\vee}$,
$x_{i}\in\C$. Each weight $\gamma=\sum\gamma_{i}\varpi_{i}$ determines
a character $(\ )^{\gamma}$ on $H$ such that $(\exp(x))^{\gamma}=e^{\langle x,\gamma\rangle}=e{}^{\sum x_{i}\gamma_{i}}$. 

Note that $W$ can be naturally identified with the permutation group
$S_{n+1}$ on $[1,n+1]$, such that $s_{i}$ is identified with the
transposition $(i,i+1)$. It is known that $\Delta_{u\varpi_{i},w\varpi_{i}}(g)$
equals the determinant of the submatrix $g_{u[1,i]\times w[1,i]}$,
See \cite[Section 2.2]{BerensteinFominZelevinsky05}. So we can denote
$\Delta_{u[1,i],w[1,i]}=\Delta_{u\varpi_{i},w\varpi_{i}}$. We will
further abbreviate $\Delta_{\{r_{1},\ldots,r_{s}\},\{c_{1},\ldots,c_{s}\}}$
by $\Delta_{r_{1}\ldots r_{s},c_{1}\ldots c_{s}}$.

\end{eg}

\subsubsection*{Cluster structures}

Consider the group $W\times W$. Denote the simple reflections in
the first copy by $s_{-a}$ and those in the second copy by $s_{a}$.
Let $\ubi=(\bi_{1},\ldots,\bi_{l})$ be a signed word such that it
is a reduced word for $(u,w)\in W\times W$, i.e., $s_{\bi_{1}}\cdots s_{\bi_{l}}$
is a minimal length factorization of $(u,w)$ into simple reflections.
Denote $(u_{\leq k},w_{\leq k})=s_{\bi_{1}}\cdots s_{\bi_{k}}$, $\forall k\in[1,l]$.
Let $(\bi_{-|J|+1},\ldots,\bi_{-1},\bi_{0})$ be any chosen Coxeter
word (i.e., any permutation of $J$). We have the seed $\dsd(\ubi)$
(Section \ref{sec:Seeds-associated-with-words}).

\begin{thm}[{\cite{qin2025partially}}]\label{thm:cluster-structure-k-G}

Take $\kk=\C$ or $\Q(v)$. Let $\ubi$ be any reduced word for $(w_{0},w_{0})$
and take $\sd=\dsd(\ubi)^{\op}$. Then we have an isomorphism $\kappa:\upClAlg(\sd)\simeq\kk[G^{u,w}]$
such that $\kappa x_{k}=\Delta_{\gamma_{k},\delta_{k}}$, $\forall k\in I$,
where
\begin{align}
\gamma_{k}= & \begin{cases}
\varpi_{\bi_{k}} & k\leq0\\
u_{\leq k}\varpi_{|\bi_{k}|} & k\in[1,l]
\end{cases} & \text{and\ensuremath{\qquad}} & \delta_{k}= & \begin{cases}
w^{-1}\varpi_{\bi_{k}} & k\leq0\\
w^{-1}w_{\leq k}\varpi_{|\bi_{k}|} & k\in[1,l]
\end{cases}.\label{eq:kappa-minor}
\end{align}
And the compatible Poisson structure $\lambda(\sd)$ has the skew-symmetric
quantization matrix $\Lambda$ given by 
\begin{align*}
\Lambda_{kj} & =(\gamma_{k},\gamma_{j})-(\delta_{k},\delta_{j}),\ \forall k>j.
\end{align*}

Moreover, take $\sd'=\dsd(\ubj)^{\op}$ for another reduced signed
word of $(w_{0},w_{0}),$ and let $\kappa'$ denote the corresponding
isomorphism $\kappa':\upClAlg(\sd')\simeq\kk[G^{u,w}]$, then $(\kappa')^{-1}\kappa(x_{k})$
are the cluster variables of the seed $\dsd(\ubi)^{\op}$ in $\bUpClAlg(\sd')$.

Finally, in the special case $(u,w)=(w_{0},w_{0})$, $\kappa$ restrict
a cluster structure on $\kk[G]$:
\begin{align*}
\kappa:\bUpClAlg(\sd)\simeq\kk[G].
\end{align*}

\end{thm}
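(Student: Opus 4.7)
The plan is to construct the isomorphism $\kappa$ explicitly on the initial cluster and then propagate it through the cluster algebra structure. First I would observe that the generalized (quantum) minors $\Delta_{\gamma_k,\delta_k}$ for $k\in I$ are algebraically independent on $G^{u,w}$ and (together with their inverses at frozen indices) generate the coordinate ring of an open torus in $G^{u,w}$. This gives a $\kk$-algebra map on the initial torus $\LP(\sd)\simeq\kk[x_i^\pm]$ sending $x_k\mapsto\Delta_{\gamma_k,\delta_k}$. The quantization compatibility is established by computing the $v$-commutation between two quantum generalized minors: standard formulas from Berenstein--Zelevinsky give $\Delta_{\gamma_k,\delta_k}*\Delta_{\gamma_j,\delta_j}=v^{(\gamma_k,\gamma_j)-(\delta_k,\delta_j)}\Delta_{\gamma_j,\delta_j}*\Delta_{\gamma_k,\delta_k}$, which matches $\Lambda_{kj}$, and one checks via a direct computation with the weights $\gamma_k,\delta_k$ in \eqref{eq:kappa-minor} that the resulting $\Lambda$ and $\tB$ form a compatible pair for $\dsd(\ubi)^{\op}$.

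The heart of the argument is then to show $\kappa$ is compatible with every mutation. For each unfrozen $k\in I_{\ufv}$, I would verify that $\kappa(\mu_k^*x'_k)$ is again a generalized minor of the form $\Delta_{\gamma'_k,\delta'_k}$, where the exchange relation for the initial seed reduces to a (quantum) generalized determinantal identity
\[
\Delta_{\gamma_k,\delta_k}*\Delta_{\gamma'_k,\delta'_k}=v^{\alpha}\prod_i\Delta_{\gamma_i,\delta_i}^{[b_{ik}]_+}+v^{\beta}\prod_i\Delta_{\gamma_i,\delta_i}^{[-b_{ik}]_+}
\]
of the type proved in Berenstein--Fomin--Zelevinsky and Berenstein--Zelevinsky. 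Combined with the Laurent phenomenon and algebraic independence, this identifies the localized cluster algebra with $\kk[G^{u,w}]$. For independence of the reduced word, Matsumoto's theorem says any two reduced words for $(u,w)$ are connected by braid moves, and by Section \ref{subsec:Operations-on-signed} each braid move corresponds to an explicit sequence of mutations composed with a permutation of indices. So the asserted identification of $(\kappa')^{-1}\kappa(x_k)$ with cluster variables of $\dsd(\ubi)^{\op}$ in $\bUpClAlg(\sd')$ follows by matching the exchange relations between minors on both sides.

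For the special case $(u,w)=(w_0,w_0)$, the point is to show $\bUpClAlg(\sd)\simeq\kk[G]$ rather than the larger $\kk[G^{w_0,w_0}]$. Since $G^{w_0,w_0}\subset G$ is open dense with complement a union of divisors cut out precisely by the frozen generalized minors $\Delta_{\varpi_a,w_0\varpi_a}$ and $\Delta_{w_0\varpi_a,\varpi_a}$ (i.e.~those associated with the initial or terminal letters of $\ubi$), the ring $\kk[G]$ equals the subring of $\kk[G^{w_0,w_0}]$ consisting of functions whose order of vanishing along each boundary divisor is nonnegative. On the cluster side, $\bUpClAlg(\sd)=\{z\in\upClAlg(\sd)\mid\nu_j(z)\geq0,\forall j\in I_{\fv}\}$. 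I would match these two valuation conditions by invoking the fact that every frozen vertex of $\dsd(\ubi)^{\op}$ is optimized in some seed in $\Delta^+$ (noted in Section \ref{sec:Seeds-associated-with-words}), so $\nu_j$ coincides with the order of vanishing along the corresponding boundary divisor. Together with Proposition \ref{prop:optimized-compactified-basis} applied to the common triangular basis of $\upClAlg(\sd)$ (Theorem \ref{thm:Lie-tri-basis}), this yields $\kappa(\bUpClAlg(\sd))=\kk[G]$.

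The main obstacle, in my view, is the last step: pinning down the equality $\bUpClAlg(\sd)=\kk[G]$ rather than just an inclusion. The geometric identification of each frozen variable with the defining equation of a specific boundary divisor of $G\setminus G^{w_0,w_0}$ must be made precise and global, and at the quantum level one must further verify that no spurious integral elements are created by the noncommutative structure. A convenient route is to use the extension/reduction technique of Theorem \ref{thm:Lie-tri-basis}, transporting the analogous, already-established result for quantum unipotent cells $\kk[N^w]$ to $\kk[G^{u,w}]$ via base change and freezing, and then passing to the compactification by optimization.
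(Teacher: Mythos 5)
Your proposal follows the classical Berenstein--Fomin--Zelevinsky route (verify the exchange relations on the initial seed, invoke algebraic independence and the Laurent phenomenon), whereas the paper credits the theorem to \cite{qin2025partially}, whose method---as explained in the proof sketch of Theorem~\ref{thm:Lie-tri-basis}---is the extension/reduction technique: embed $C$ in a larger generalized Cartan matrix $\tC$, realize the target via freezing and base change from a quantum unipotent cell $\kk[\tN^{\tw}]$, and transport the already-established cluster structure and triangular basis. You gesture at exactly this route in your final paragraph, but present it as a fallback rather than the main argument. The paper's remark after the theorem also tells you that a more direct approach in the spirit of your first two paragraphs is the one taken by \cite{oya2025note}, which succeeds outside type $F_4$; so the two routes are genuinely different, and the extension/reduction technique is what buys uniformity across all types and the quantum level.

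The more serious issue is a gap in your middle step. Verifying that $\kappa$ intertwines one exchange relation per unfrozen vertex, together with algebraic independence of the initial minors and the Laurent phenomenon, only yields a well-defined map $\clAlg(\sd)\to\kk[G^{u,w}]$. It does not by itself identify $\upClAlg(\sd)$ with $\kk[G^{u,w}]$: even in the classical case, BFZ needed the starfish lemma plus a factorization argument to show that $\kk[G^{u,w}]$ equals the intersection of the adjacent Laurent rings, and at the quantum level this equality was not available from Berenstein--Zelevinsky alone (hence the intervening works of Goodearl--Yakimov and the need for \cite{qin2025partially}). Your sentence ``Combined with the Laurent phenomenon and algebraic independence, this identifies the localized cluster algebra with $\kk[G^{u,w}]$'' asserts the hard direction without supplying it. Similarly, for $\bUpClAlg(\sd)\simeq\kk[G]$ your plan to match $\nu_j$ with the order of vanishing along a boundary divisor is plausible but would require showing that equality holds for the whole algebra, not just on pointed basis elements, and that $\kk[G]$ carries no further integrality constraints at the quantum level; you correctly flag this as the main obstacle, and the paper's approach sidesteps it precisely by reducing to the unipotent cell case where the integral form and dual canonical basis are already understood, then using optimization (Proposition~\ref{prop:optimized-compactified-basis}) to pass to the partial compactification.
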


By Theorem \ref{thm:cluster-structure-k-G}, different choices of
the the reduced words provide the same cluster structures on $\kk[G^{u,w}]$,
and the same is true for $\kk[G]$. We will omit the symbol $\kappa$
and identify $\kk[G^{u,w}]$ and $\kk[G]$ with the corresponding
cluster algebras, respectively.

\begin{rem}

The statements about $\C[G^{u,w}]$ was first proved in \cite{BerensteinFominZelevinsky05}\cite{shen2021cluster}.
The statements for quantum $\kk[G^{u,w}]$ was partially proved in
\cite{BerensteinZelevinsky05}\cite{goodearl2016berenstein}\cite{GeissLeclercSchroeer11}.
The cluster structure on $\C[SL_{n}]$ could be found in \cite{fomin2020introduction}. 

Based on an approach different from that of \cite{qin2025partially},
\cite{oya2025note} proved that $\C[G]=\bClAlg(\sd)=\bUpClAlg(\sd)$
when $G$ is not of type $F_{4}$.

\end{rem}

\begin{eg}[{\cite[Example 7.8]{qin2023analogs}}]

Let us consider the case $G=SL_{2}$ and choose the reduced word $\ubi=(1,-1)$
for $(w_{0},w_{0})\in W\times W$. The quiver for the seed $\dsd(\ubi)$
is given in Figure \ref{fig:quiver-SL2}. Take the (classical or quantum)
seed $\sd=\dsd(\ubi)^{\op}$. Then we have an isomorphism $\kappa:\bUpClAlg(\sd)\simeq\kk[SL_{3}]$,
such that $\kappa$ send $x_{k}$, $k\in[0,2]$, to:
\begin{align*}
\Delta_{1,2},\Delta_{1,1},\Delta_{2,1}.
\end{align*}
The frozen variables are $\Delta_{1,2},\Delta_{2,1}$. The quantization
matrix $\Lambda$ is
\begin{align*}
\Lambda & =\left(\begin{array}{ccc}
0 & -1 & 0\\
1 & 0 & 1\\
0 & -1 & 0
\end{array}\right).
\end{align*}
The matrix $(b_{ij})_{i,j\in[-1,1]}$ is
\begin{align*}
\tB= & \left(\begin{array}{ccc}
0 & -1 & 0\\
1 & 0 & 1\\
0 & -1 & 0
\end{array}\right).
\end{align*}

Let us denote $u=x_{0}(\sd)$, $x=x_{1}(\sd)$, $v=x_{2}(\sd)$, $y=x_{1}(\mu_{1}\sd)$.
The common triangular basis of $\bClAlg=\bUpClAlg$ is $\{[u^{n}x^{m}v^{l}]|n,m,l\in\N\}\sqcup\{[u^{n}y^{m}v^{l}]|n,m,l\in\N,m>0\}$.
It coincides with the global crystal basis of the $\Q(v)[SL_{2}]$
in \cite[Section 9]{Kashiwara93} up to $v$-multiples.

\end{eg}

\begin{eg}

Let us consider the case $G=SL_{3}$ and choose the reduced word $\ubi=(1,-1,2,-2,1,-1)$
for $(w_{0},w_{0})\in W\times W$. The quiver for the seed $\dsd(\ubi)$
is given in Figure \ref{fig:SL3}. Take the (classical or quantum)
seed $\dsd=\dsd(\ubi)^{\op}$. We have $\kappa:\bUpClAlg(\sd)\simeq\kk[SL_{3}]$,
such that $\kappa$ send $x_{k}$, $k\in[-1,6]$, to:
\begin{align*}
\Delta_{1,3},\Delta_{12,23},\Delta_{1,2},\Delta_{2,2},\Delta_{12,12},\Delta_{23,12},\Delta_{2,1},\Delta_{3,1}.
\end{align*}
The frozen variables are $\Delta_{1,3},\Delta_{12,23},\Delta_{23,12},\Delta_{3,1}$.

\end{eg}

\begin{eg}[{\cite[Example 2.9]{BerensteinFominZelevinsky05}}]

Let us consider the case $G=SL_{3}$ and choose the reduced word $\ubi=(1,2,1,-1,-2,-1)$
for $(w_{0},w_{0})\in W\times W$. The quiver for the seed $\dsd(\ubi)$
is given in Figure \ref{fig:SL3-BFZ}. Take the (classical or quantum)
seed $\sd=\dsd(\ubi)^{\op}$. We have $\kappa:\bUpClAlg(\sd)\simeq\kk[SL_{3}]$,
such that $\kappa$ send $x_{k}$, $k\in[-1,6]$, to:
\begin{align*}
\Delta_{1,3},\Delta_{12,23},\Delta_{1,2},\Delta_{12,12},\Delta_{1,1},\Delta_{2,1},\Delta_{23,12},\Delta_{3,1}.
\end{align*}
The frozen variables are $\Delta_{1,3},\Delta_{12,23},\Delta_{23,12},\Delta_{3,1}$.

\begin{figure}
\caption{The quiver for an infinite seed of type $A_{1}$ in \cite{geiss2024representations}}
\label{fig:SL3-BFZ}

\begin{tikzpicture}
 [scale=1.5,node distance=48pt,on grid,>={Stealth[length=4pt,round]},bend angle=45, inner sep=0pt]

\node[frozen] (q0) at (113.8pt,42.675pt) {0};

\node[frozen] (q-1) at (99.575pt,14.225pt) {-1};
\node[unfrozen] (q1) at (128.025pt,14.225pt) {1};
\node[unfrozen] (q2) at (142.25pt,42.675pt) {2};
\node[unfrozen] (q3) at (5.5,14.225pt) {3};
\node[unfrozen] (q4) at (170.7pt,14.225pt) {4};
\node[frozen] (q5) at (184.925pt,42.675pt) {5};
\node[frozen] (q6) at (199.15pt,14.225pt) {6};

\draw[->,dashed]  (q0) edge (q-1);
\draw[->,dashed]  (q5) edge (q6);

\draw[->]  (q-1) edge (q1);
\draw[->]   (q1) edge (q0);
\draw[->]   (q0) edge (q2);
\draw[->]   (q2) edge (q1);
\draw [->]  (q1) edge (q3);
\draw [->]  (q4) edge (q3);
\draw [->]  (q3) edge (q2);

\draw [->]  (q2) edge (q4);
\draw [->]  (q5) edge (q2);

\draw [->]  (q4) edge (q5);
\draw [->]  (q6) edge (q4);
\end{tikzpicture}
\end{figure}

\end{eg}

\begin{eg}

Let us consider the case $G=SL_{3}$ and choose the reduced word $\ubi=(1,2,1)$
for $(e,w_{0})\in W\times W$. The quiver for the seed $\dsd(\ubi)$
is given in Figure \ref{fig:SL3-dBS}. Take the (classical or quantum)
seed $\sd=\dsd(\ubi)^{\op}$. We have $\kappa:\upClAlg(\sd)\simeq\kk[SL_{3}^{e,w_{0}}]$,
such that $\kappa$ send $x_{k}$, $k\in[-1,3]$, to:
\begin{align*}
\Delta_{1,3},\Delta_{12,23},\Delta_{1,2},\Delta_{12,12},\Delta_{1,1}.
\end{align*}
We have $I_{\fv}=\{-1,0,2,3\}$, $I_{\ufv}=\{1\}$, $\tB=\left(\begin{array}{c}
-1\\
1\\
0\\
-1\\
1
\end{array}\right)$. 

\begin{figure}
\caption{The quiver for an infinite seed of type $A_{1}$ in \cite{geiss2024representations}}
\label{fig:SL3-dBS}

\begin{tikzpicture}
 [scale=1.5,node distance=48pt,on grid,>={Stealth[length=4pt,round]},bend angle=45, inner sep=0pt]

\node[frozen] (q0) at (113.8pt,42.675pt) {0};

\node[frozen] (q-1) at (99.575pt,14.225pt) {-1};
\node[unfrozen] (q1) at (128.025pt,14.225pt) {1};
\node[frozen] (q2) at (142.25pt,42.675pt) {2};
\node[frozen] (q3) at (5.5,14.225pt) {3};

\draw[->,dashed]  (q0) edge (q-1);

\draw[->]  (q-1) edge (q1);
\draw[->]   (q1) edge (q0);
\draw[->]   (q0) edge (q2);
\draw[->]   (q2) edge (q1);
\draw [->]  (q1) edge (q3);
\draw [->,dashed]  (q3) edge (q2);

\end{tikzpicture}
\end{figure}

Note that the cluster variables are 
\begin{align*}
\Delta_{1,3} & =\Delta_{\varpi_{1},w_{0}^{-1}\varpi_{1}}=\Delta_{\varpi_{1},\varpi_{1}-\alpha_{1}-\alpha_{2}}=\Delta_{\frac{2}{3}\alpha_{1}+\frac{1}{3}\alpha_{2},-\frac{1}{3}\alpha_{1}-\frac{2}{3}\alpha_{2}}\\
\Delta_{12,23} & =\Delta_{\varpi_{2},w_{0}^{-1}\varpi_{2}}=\Delta_{\varpi_{2},\varpi_{2}-\alpha_{1}-\alpha_{2}}=\Delta_{\frac{1}{3}\alpha_{1}+\frac{2}{3}\alpha_{2},-\frac{2}{3}\alpha_{1}-\frac{1}{3}\alpha_{2}}\\
\Delta_{1,2} & =\Delta_{\varpi_{1},s_{1}s_{2}\varpi_{1}}=\Delta_{\varpi_{1},\varpi_{1}-\alpha_{1}}=\Delta_{\frac{2}{3}\alpha_{1}+\frac{1}{3}\alpha_{2},-\frac{1}{3}\alpha_{1}+\frac{1}{3}\alpha_{2}}\\
\Delta_{12,12} & =\Delta_{\varpi_{2},s_{1}\varpi_{2}}=\Delta_{\frac{1}{3}\alpha_{1}+\frac{2}{3}\alpha_{2},\frac{1}{3}\alpha_{1}+\frac{2}{3}\alpha_{2}}\\
\Delta_{1,1} & =\Delta_{\varpi_{1},\varpi_{1}}=\Delta_{\frac{2}{3}\alpha_{1}+\frac{1}{3}\alpha_{2},\frac{2}{3}\alpha_{1}+\frac{1}{3}\alpha_{2}}.
\end{align*}
The quantization matrix is 
\begin{align*}
\Lambda & =\left(\begin{array}{ccccc}
0 & 0 & -1 & -1 & -1\\
0 & 0 & 0 & -1 & -1\\
1 & 0 & 0 & 0 & -1\\
1 & 1 & 0 & 0 & 0\\
1 & 1 & 1 & 0 & 0
\end{array}\right).
\end{align*}

\end{eg}

\subsection{Cluster algebras from representations of simply-laced quantum affine
algebras}\label{subsec:Cluster-algebras-from-q-aff-alg}

Let $\frg$ denote a finite dimensional complex semisimple Lie algebra
and whose Cartan matrix $C$ is symmetric ($\frg$ is called simply
laced). Let $U_{\epsilon}(\hfrg)$ denote the quantum affine algebra,
which is a Hopf algebra, where the parameter $\epsilon\in\C^{\times}\backslash\{0\}$
is not a root of unity. Let $\cO$ denote the abelian category consisting
of its (type $1$) finite dimensional representations, which has the
tensor product $\otimes$. The simple modules $S$ in $\cO$ are parameterized
by their Drinfeld polynomials $P_{S}=\{P_{a}(z)|a\in J\}$, where
$z$ is a formal variable, such that $P_{a}(z)$ take the form $P_{a}(z)=\prod_{i}(1-\lambda_{i}z)$
for some $\lambda_{i}\in\C^{\times}$.

We can always choose a height function $\xi:J\rightarrow\Z$ such
that $|\xi(a)-\xi(b)|=1$ whenever $C_{ab}\neq0$. Let $\cO_{\Z}$
be the abelian subcategory of $\cO$ such that the roots of the Drinfeld
polynomials $P_{a}$ of the simple objects in $\cO_{\Z}$ belong to
$\epsilon^{2\Z+\xi(a)}$. $\cO_{\Z}$ is a tensor subcategory.

Denote $\hJ=\{(a,d)|a\in J,d\in\xi(a)+2\Z\}$ and $W=\oplus_{(a,d)\in\hJ}\N e_{a,d}$.
For $w=\sum w_{a,d}e_{a,d}\in W$, let $S(w)$ denote the simple module
whose Drinfeld polynomials are $P_{a}(z)=\prod_{d\in\xi(a)+2\Z}(1-\varepsilon^{d}z)^{w_{a,d}}$,
$a\in J$. In particular, the modules $S(e_{a,d})$ are called the
fundamental modules, denoted $W_{a,d}$. For $k\in\Z_{>0}$, the modules
$S(e_{a,d}+e_{a,d+2}+\cdots+e_{a,d+2k-2})$ are called the Kirillov-Reshetikhin
modules, denoted $W_{a,d}^{k}$. We also introduce the standard module
arising from quiver varieties
\begin{align}
M(w) & :=\overrightarrow{\otimes}_{d}(\otimes_{a}W_{a,d}{}^{\otimes w_{a,d}}),\label{eq:std-mod}
\end{align}
where we take the increasing order in $\overrightarrow{\otimes}_{d}$
(different orders in $\otimes_{a}$ produce the same result); see
\cite[Axiom 3]{Nakajima04}\cite[Corollary 7.16]{varagnolo2002standard}.

Although $V\otimes W\ncong W\otimes V$ in $\cO_{\Z}$ in general,
the Grothendieck ring $K_{0}(\cO_{\Z})$ is a commutative algebra.
For simplicity, we will denote the isoclass of any module $V$ in
$K_{0}(\cO_{\Z})$ by $V$. Recall that there is an embedding called
the $q$-character $\chi_{q}:K_{0}(\cO_{\Z})\hookrightarrow\cY:=\Z[Y_{a,d}^{\pm}]_{(a,d)\in\hJ}$.
Define the Laurent monomial 
\begin{align*}
A_{a,h} & :=Y_{a,h+1}Y_{a,h-1}\prod_{b\neq a}Y_{b,h}^{C_{ba}},
\end{align*}
where $(b,h-1)\in\hJ$. Denote $Y^{w}=\prod Y_{a,d}^{w_{a,d}}$ and
$A^{-v}=\prod A_{b,h}^{-v_{b,h}}$ for $v_{b,h}\in\Z$. We introduce
the dominance order $\prec$ on $W$ such that $w'\preceq w$ if $Y^{w'}=Y^{w}(A^{-1})^{v}$
for some $v=(v_{a,h})\geq0$. Then $\chi_{q}(S(w))$ is $w$-pointed,
i.e., $Y^{w}$ is its unique $\prec$-leading term with coefficient
$1$. Denote $\chi_{q}(V)$ by $[V]$. We will identify $K_{0}(\cO_{\Z})$
with its image under $\chi_{q}$.

We can introduce a quantum torus algebra $\cY_{t}:=\Z[t^{\pm}][Y_{a,d}]_{(a,d)\in\hJ}$
using the $t$-twisted multiplication $*$ in \cite[Section 7.3]{qin2017triangular}\cite[(6)]{HernandezLeclerc11},
viewed as a $t$-deformation of $\cY$. The bar-involution $\overline{(\ )}$
is the $\Z$-linear automorphism on $\cY_{t}$ such that $\overline{t^{\alpha}Y^{w}}=t^{-\alpha}Y^{w}$.
Following \cite{Nakajima04}\cite{VaragnoloVasserot03}\cite{Hernandez02},
we have a $Z$-linear injective map, called a $t$-analog of $q$-characters:
\begin{align*}
\chi_{q,t} & :K_{0}(\cO_{\Z})\hookrightarrow\cY_{t}.
\end{align*}
At the classical limit $t\mapsto1$, $\chi_{q,t}$ becomes $\chi_{q}$.
We further have 
\begin{align}
\chi_{q,t}M(w) & =t^{\alpha}\overleftarrow{\prod_{d}}(\prod_{a}\chi_{q,t}(W_{a,d})^{w_{a,d}}).\label{eq:char-std-mod}
\end{align}
for appropriate $\alpha\in\Z$ such $M(w)$ is $w$-pointed, where
the product $\overleftarrow{\prod_{d}}$ is taking in the \textbf{decreasing}
order.

\begin{rem}

The equation \eqref{eq:char-std-mod} was known in \cite{Nakajima04}\cite{varagnolo2002standard}
for a slightly different $t$-deformation and the increasing order
$\overrightarrow{\prod_{d}}$. It holds for our $t$-deformation and
the decreasing order $\overleftarrow{\prod_{d}}$ ; see \cite[Sections 5.6]{HernandezLeclerc11}\cite[Proposition 5.3.6]{Qin12}.

\end{rem}

Let $K_{t}(\cO_{\Z})^{\op}$ denote the free $\Z[t^{\pm}]$-module
spanned by $\{\chi_{q,t}(S)|S\text{ is simple in }\cO_{\Z}\}$. It
is a subalgebra of $\cY_{t}$. We will call its opposite algebra $K_{t}(\cO_{\Z})$
the quantum Grothendieck ring of $\cO_{\Z}$, which is viewed as a
$t$-deformation of $K_{0}(\cO_{\Z})$. Correspondingly, $K_{t}(\cO_{\Z}^{\op})=K_{t}(\cO_{\Z})^{\op}$
is called the quantum Grothendieck ring of $\cO_{\Z}^{\op}$. Recall
that we have the canonical anti-isomorphism $\iota:\cY_{t}\simeq\cY_{t}^{\op}$
between $\Z[v^{\pm}]$-algebras, such that $\iota(Y^{w}):=Y^{w}$
(see \eqref{eq:iota-opposite}). It induces $\iota:K_{t}(\cO_{\Z})^{\op}\simeq K_{t}(\cO_{\Z})$.
For any $V\in\cO_{\Z}$, we denote $[V]_{t}:=\iota\chi_{q,t}(V)\in K_{t}(\cO_{\Z})$.

\begin{rem}

Our quantum Grothendieck ring $K_{t}(\cO_{\Z})$ is the opposite algebra
of the one used in \cite{HernandezLeclerc11}\cite{qin2017triangular}.
We make this choice because it leads to the same product order on
the both sides below in $K_{t}(\cO_{\Z})$:
\begin{align*}
[\overrightarrow{\otimes}_{d}(\otimes_{a}W_{a,d}{}^{\otimes w_{a,d}})]_{t} & =t^{\alpha}\overrightarrow{\prod_{d}}(\prod_{a}[W_{a,d}]_{t}^{w_{a,d}}).
\end{align*}
Correspondingly, if an algebra $\alg$ was previously categorified
by $\cO_{\Z}$ in the convention of \cite{HernandezLeclerc11}\cite{qin2017triangular},
$\alg^{\op}$ is said to be categorified by $\cO_{\Z}$ in our convention.

\end{rem}

\begin{thm}[\cite{Nakajima04}]\label{thm:qt-char-simples}

For any $w\in W$, the following statements are true in the quantum
Grothendieck ring $K_{t}(\cO_{\Z})$.
\begin{enumerate}
\item $\overline{[S(w)]_{t}}=[S(w)]_{t}$.
\item $[M(w)]_{t}=[S(w)]_{t}+\sum_{w'<w}a_{w,w'}(t)[S(w')]_{t}$.
\item $a_{w,w'}(t)\in t^{-1}\N[t^{-1}]$.
\item $a_{w,w'}(1)$ equals the multiplicity $[M(w):S(w')]$.
\end{enumerate}
\end{thm}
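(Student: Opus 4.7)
The plan is to view the four statements as an instance of the Kazhdan--Lusztig algorithm (Lemma \ref{lem:KL-basis}) applied to $K_t(\cO_\Z) \subset \cY_t$, equipped with the standard basis $\std = \{[M(w)]_t\}_{w \in W}$ and the dominance order $\prec$, and then to identify the resulting bar-invariant basis with $\{[S(w)]_t\}$ using geometric input from graded quiver varieties.

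First I would verify the KL setup. By \eqref{eq:char-std-mod}, $[M(w)]_t$ is $w$-pointed in $\cY_t$, so $\std$ is a $\prec$-ordered $\Z[t^\pm]$-basis of its span, and $\{w' : w' \prec w\}$ is finite for each $w$. To establish bar-triangularity, recall that each fundamental $(q,t)$-character $[W_{a,d}]_t$ is bar-invariant and that $\overline{(\ )}$ is an anti-homomorphism of $\cY_t$ (the $t$-twist being skew-symmetric). Applying $\overline{(\ )}$ to the decreasing-order product in \eqref{eq:char-std-mod} yields the corresponding \emph{increasing}-order product with $t^\alpha$ replaced by $t^{-\alpha}$; commuting factors back into the original order introduces a scalar $t^\beta$ together with a correction supported strictly below $w$ in $\prec$. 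Re-expanding this correction in $\std$, by induction on $\prec$, one obtains $\overline{[M(w)]_t} - [M(w)]_t \in \sum_{w' \prec w} \Z[t^\pm][M(w')]_t$.

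With these hypotheses, Lemma \ref{lem:KL-basis} with $\mm = t^{-1}\Z[t^{-1}]$ produces a unique bar-invariant basis $\{L(w)\}$ satisfying $L(w) = [M(w)]_t + \sum_{w' \prec w} c_{w',w}(t)[M(w')]_t$ with $c_{w',w}(t) \in t^{-1}\Z[t^{-1}]$. Inverting this unitriangular base change yields unique $a_{w,w'}(t) \in t^{-1}\Z[t^{-1}]$ with $[M(w)]_t = L(w) + \sum_{w' \prec w} a_{w,w'}(t) L(w')$. This already realizes the abstract form of (1) and (2), modulo identifying $L(w)$ with $[S(w)]_t$.

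The main obstacle is precisely this identification, together with positivity (3), which is not formal and genuinely requires geometry. The plan is to invoke Nakajima's realization of $[S(w)]_t$ as the generating function of Poincar\'e polynomials of stalks of the IC complex on the stratum of the graded quiver variety indexed by $w$, while $[M(w)]_t$ corresponds (up to normalization) to the analogous generating function for the constant sheaf on that stratum. Verdier duality then gives $\overline{[S(w)]_t} = [S(w)]_t$, and the Beilinson--Bernstein--Deligne decomposition theorem, together with purity of IC stalks on these varieties, expresses $[M(w)]_t$ as $[S(w)]_t + \sum_{w' \prec w} a_{w,w'}(t)[S(w')]_t$ with $a_{w,w'}(t) \in t^{-1}\N[t^{-1}]$. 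Thus $[S(w)]_t$ satisfies the conditions characterizing $L(w)$, yielding $L(w) = [S(w)]_t$ and statements (1)--(3). Finally, (4) follows by specializing $t \mapsto 1$: the above decomposition becomes the Jordan--H\"older expansion of $[M(w)]$ in $K_0(\cO_\Z)$, so $a_{w,w'}(1) = [M(w):S(w')]$.
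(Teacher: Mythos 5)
The paper itself gives no proof of this statement; it is cited to Nakajima \cite{Nakajima04}. Your sketch is a correct high-level account of Nakajima's geometric argument: $[S(w)]_t$ is realized as the generating function of Poincar\'e polynomials of the IC complex of (the closure of) the stratum indexed by $w$ in the graded quiver variety, $[M(w)]_t$ corresponds to the pushforward of a (shifted) constant sheaf along the proper resolution, Verdier duality gives bar-invariance, the BBD decomposition theorem gives the unitriangular positive expansion, and the degree bound $t^{-1}\N[t^{-1}]$ follows from the support/cosupport and purity conditions on the summands.

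Two comments. First, your preliminary step of forcing the statement into the mold of Lemma \ref{lem:KL-basis} (via bar-triangularity of the $\std$-basis) is valid but not strictly necessary: the geometry produces the decomposition $[M(w)]_t=[S(w)]_t+\sum a_{w,w'}(t)[S(w')]_t$ together with (1) and (3) directly, and the KL lemma then serves only as a \emph{uniqueness} characterization of $[S(w)]_t$ among bar-invariant $\prec$-unitriangular families, not as the engine producing it. This characterization is genuinely useful downstream (it is exactly how the paper later identifies the common triangular basis with $\{[S(w)]_t\}$ in Theorems \ref{thm:tri-basis-is-canonical} and \ref{thm:cluster-str-HL-ADE}), so flagging it is a good instinct. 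Second, your bar-triangularity argument implicitly uses that each fundamental $(q,t)$-character is bar-invariant and that reordering a product of $w$-pointed factors in a quantum torus changes it by a $t$-power plus terms strictly $\prec$-lower; both are standard, and worth stating explicitly since the $t$-twist $*$ here is the one of \cite{HernandezLeclerc11}, not a generic one. Neither point is a gap -- the argument goes through.
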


\subsubsection*{Cluster structure}

Choose any height function $\xi$. Let $\uc=(c_{1},\ldots,c_{|J|})$
be a Coxeter word and $\ueta=(\eta_{i})_{i\in[r,s]}$ be a $\uc$-adapted
word, i.e., $\ueta$ is a subsequence of $\uc^{N}$ for some $N$. 

For any interval $[j,k]\subset[r,s]$ such that $\eta_{j}=\eta_{k}=:a$,
let $W_{[j,k]}(\xi)$ denote the Kirillov-Reshetikhin module $W_{a,-2k+\xi(a)}^{k-j+1}=S(e_{a,-2k+\xi(a)}+e_{a,-2k+2+\xi(a)}+\cdots+e_{a,-2j+\xi(a)})$.
Denote the fundamental module by $W_{k}(\xi):=W_{[k,k]}(\xi)=S(e_{a,-2k+\xi(a)})$.
Let $\cO_{\ueta}=\cO_{\ueta}(\xi)$ be the smallest tensor subcategory
of $\cO_{\Z}$ containing the fundamental modules $W_{k}(\xi)$, $\forall k\in[r,s]$,
and $K_{t}(\cO_{\ueta})$ the corresponding subalgebra of $K_{t}(\cO_{\Z})$
generated by $[W_{k}(\xi)]_{t}$, $k\in[r,s]$.

\begin{rem}

Hernandez and Leclerc called $\cO_{(\uc)^{l+1}}$ a level-$l$ subcategory,
denoted $\cO_{l}$ (\cite{HernandezLeclerc09}). When $\overrightarrow{w}$
is a reduced word of $w_{0}$ and it is $\uc$-adapted, they also
considered the subcategory $\cO_{\overrightarrow{w}}$ and showed
that $K_{t}(\cO_{\overrightarrow{w}})^{\op}\otimes\Q(v)$ is isomorphic
to the quantized coordinate ring $\Q(v)[N]$ \cite{HernandezLeclerc11}.

\end{rem}

When $S(e_{a,d})$ is the fundamental module $W_{k}(\xi)$ for $k\in[r,s]$,
denote $\beta_{k}:=e_{a,d}$. As in \eqref{eq:wt-to-g-vector}, denote
the linear map $\theta^{-1}:\oplus_{k\in[r,s]}\Z\beta_{k}\simeq\Mcirc$
such that 
\begin{align*}
\theta^{-1}(\beta_{k}) & =f_{k}-f_{k[-1]},\forall k\in[r,s],
\end{align*}
where $f_{k[-1]}=0$ if $k=k^{\min}$ in $[r,s]$.

\begin{thm}[\cite{qin2017triangular}]\label{thm:cluster-str-HL-ADE}

Take $\sd=\rsd(\ueta)^{\op}$ and the canonical anti-isomorphism $\iota:\bUpClAlg(\sd^{\op})\simeq\bUpClAlg(\sd)$.
Then we have the $\Z$-algebra isomorphism $\kappa:\bUpClAlg(\sd)\simeq K_{t}(\cO_{\ueta})$
such that $\kappa(v)=t$, $\kappa\iota(W_{[j,k]})=[W_{[j,k]}(\xi)]_{t}$.
Moreover, the common triangular basis $\can$ of $\bUpClAlg(\sd)$
is $\{(\kappa)^{-1}[S(w)]_{t}|S(w)\text{ is simple in }\cO_{\ueta}\}$,
such that $(\kappa)^{-1}[S(w)]_{t}$ is $\theta^{-1}(w)$-pointed.

\end{thm}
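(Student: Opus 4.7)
The plan is to match the standard bases on both sides to produce $\kappa$, then transport Nakajima's triangularity (Theorem \ref{thm:qt-char-simples}) through $\kappa$ to identify simple classes with the cluster-theoretic triangular basis, and finally invoke Theorem \ref{thm:tri-basis-existence-criterion} to promote from triangular-at-$\sd$ to common triangular.

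For the identification $\kappa$, Theorem \ref{thm:std-bases} furnishes $\bUpClAlg(\sd^{\op})$ with the standard basis $\stdBasis$ of ordered products of fundamental variables $W_k$ satisfying a Levendorskii--Soibelman straightening relation. On the representation-theoretic side, the classes $[M(w)]_t$ of standard modules are the ordered products of $[W_k(\xi)]_t$ by \eqref{eq:char-std-mod}, and they form a $\Z[t^{\pm}]$-basis of $K_t(\cO_{\ueta})$. Both sets satisfy the same two-term commutation relations modulo $\prec$-lower correction terms, because in either case the relation is determined by the same truncated $q,t$-character computation for a pair of fundamental modules at consecutive spectral parameters. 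Hence $\iota(W_k) \mapsto [W_k(\xi)]_t$ extends to a $\Z[t^{\pm}]$-algebra isomorphism $\kappa$, and the identity $\kappa\iota(W_{[j,k]}) = [W_{[j,k]}(\xi)]_t$ follows from uniqueness of $\prec_{\sd}$-pointed elements of the correct degree (Lemma \ref{lem:tropical-to-cluster-monomial}).

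For the triangular basis statement, I would transport Theorem \ref{thm:qt-char-simples} through $\kappa^{-1}$. Under the bijection $\theta^{-1}$ of \eqref{eq:wt-to-g-vector}, the dominance order on Drinfeld monomial weights matches $\prec_{\sd}$, and the standard monomials $\std(w)$ correspond to $\kappa^{-1}[M(w)]_t$, which are $\theta^{-1}(w)$-pointed. Nakajima's bar-invariance and $t^{-1}\Z[t^{-1}]$-unitriangularity then show that $\{\kappa^{-1}[S(w)]_t\}$ is precisely the Kazhdan--Lusztig-type basis for $\bUpClAlg(\sd)$ associated with $\stdBasis$ in the sense of Lemma \ref{lem:inj-tri}. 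By Theorem \ref{thm:std-bases}(4), this coincides with the triangular basis $\can^{\sd}$ of $\bUpClAlg(\sd)$ with respect to the initial seed, and the required pointedness is automatic.

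To upgrade to a common triangular basis, I would apply Theorem \ref{thm:tri-basis-existence-criterion}: it suffices to verify that $\can^{\sd}$ contains the cluster monomials of all seeds along the mutation sequences $\Sigma$ and $(\sigma^{-1}\Sigma)^{-1}$ starting from $\sd$. The intermediate cluster variables along these sequences are interval variables $W_{[j,k]}$ (Lemma \ref{lem:interval-var-deg}), whose $\kappa$-images are Kirillov--Reshetikhin modules, hence simple. The main obstacle is then showing that the \emph{full} cluster monomials — not just the individual variables — in each intermediate seed correspond to simple modules in $\cO_{\ueta}$; equivalently, that the relevant tensor products of Kirillov--Reshetikhin modules with nested Drinfeld intervals are real and simple. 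In the $ADE$ case this factorization is governed by the $T$-system and can be extracted from the same truncated $q,t$-character identities that already underwrite the straightening law above, completing the proof.
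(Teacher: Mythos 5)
Your plan captures the correct high-level strategy (match PBW/standard bases, transport Nakajima's $(q,t)$-character triangularity, then apply the reachability criterion), and your identification of the ``main obstacle'' --- that \emph{cluster monomials}, not merely cluster variables, must map to simple classes --- is exactly right. But there are several concrete gaps in the details. First, the construction of $\kappa$ is not justified: asserting that both sides ``satisfy the same two-term commutation relations because the relation is determined by the same truncated $q,t$-character computation'' presupposes the correspondence you are trying to build. The original argument identifies the initial cluster variables of $\rsd(\ueta)^{\op}$ with truncated $(q,t)$-characters of specific Kirillov--Reshetikhin modules by direct computation inside $\cY_t$, and then tracks mutations --- it does not proceed by matching abstract straightening laws. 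Moreover, your appeal to Lemma~\ref{lem:tropical-to-cluster-monomial} to conclude $\kappa\iota(W_{[j,k]})=[W_{[j,k]}(\xi)]_t$ is a misuse: that lemma requires both elements to be \emph{compatibly pointed at $\sd$, $\sd'$ and $\sd'[-1]$}, not merely to share the same degree in one seed.

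Second, your Kazhdan--Lusztig identification conflates three different KL setups. Nakajima's Theorem~\ref{thm:qt-char-simples} gives unitriangularity against the \emph{standard module} basis ordered by the \emph{dominance} order; Lemma~\ref{lem:inj-tri} runs the KL algorithm against $\Inj^{\sd}$ (products of cluster monomials of $\sd$ and $\sd[1]$, not ordered products of fundamental variables) also ordered by dominance; and Theorem~\ref{thm:std-bases}(4) runs it against $\stdBasis$ ordered by the \emph{lexicographic or reverse-lexicographic} order. These are genuinely different initial data for the recursion, and the fact that they all produce the same bar-invariant basis is a nontrivial theorem, not an automatic consequence of uniqueness. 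You also invoke both Theorem~\ref{thm:std-bases}(4) (which already asserts the \emph{common} triangular basis equals the $\stdBasis$-KL basis) and Theorem~\ref{thm:tri-basis-existence-criterion} (to ``upgrade'' to a common triangular basis); one makes the other redundant, and using Theorem~\ref{thm:std-bases} --- drawn from \cite{qin2023analogs} --- to reprove a result attributed to \cite{qin2017triangular} risks a circular dependency, since that later paper's standard-basis machinery builds on the earlier common-triangular-basis results. The actual 2017 proof avoided all of this by working directly with $(q,t)$-characters and verifying admissibility (the triangularity and compatibility conditions) at $\sd$ and along $\Sigma$ using Nakajima's results together with the reality/simplicity of tensor products of nested Kirillov--Reshetikhin modules.
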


\begin{rem}

Note that we always have $\bUpClAlg(\sd)=\bClAlg(\sd)$.

In the classical case ($\kk=\Z$), \cite{HernandezLeclerc09} showed
that $\cO_{(\uc)^{l+1}}$ categorifies $\bClAlg(\sd)$ for $\frg$
of type $A_{1}$ or $l=1$. \cite{qin2017triangular} proved this
categorification result (at the classical or quantum level) for all
simply laced $\frg$. The categorification result is true for all
semisimple $\frg$ by \cite{kashiwara2021monoidal}.

Extending the range $[r,s]$ for an appropriate signed word $\ubi=(\bi_{k})_{k\in[r,s]}$
to $(-\infty,+\infty)$ as in Section \ref{sec:Extension-to-infinite},
one could easily extend the statements in Theorem \ref{thm:tri-basis-is-canonical}(2)
to $K_{t}(\cO_{\Z})$; see \cite{qin2024infinite}. One could still
establish the cluster structure when $\frg$ is not simply laced (see
\cite{hernandez2013cluster}). In this case, $\kappa\can$ still equals
the Kazhdan-Lusztig-type basis $\{[S(w)]_{t}|S(w)\text{ is simple}\}$
(\cite{qin2023analogs}). But it is still conjectured that $\chi_{q,t}(S(w))|_{t\mapsto1}=\chi_{q}(S(w))$
(\cite{fujita2022isomorphisms} proved it in type $B$).

\end{rem}

\begin{eg}

Let us continue the seed $\sd=\rsd(\ubi)$ in Example \ref{eg:HL-A2-inf},
where $\uc=(1,2,3)$, $\ubi=(\bi_{k})_{k\in[1,\infty)}=(\uc)^{\infty}$.
Choose $\xi(a)=-a$ for $a\in\{1,2,3\}$, such that $\xi(c_{j})\geq\xi(c_{k})$
when $j<k$. We have $\kappa\iota:\bUpClAlg(\sd)\subset K_{t}(\cO_{\Z})$.
The interval variables are $W_{[a+2r,a+2s]}$ for $0\leq r\leq s$.
Then we have 
\begin{align*}
\kappa\iota W_{[a+2r,a+2s]} & =[S(e_{a,-2s+\xi(a)}+e_{a,-2s+2+\xi(a)}+\cdots+e_{a,-2r+\xi(a)}]_{t}\in K_{t}(\cO_{\Z}).
\end{align*}

\end{eg}
\newcommand{\etalchar}[1]{$^{#1}$}
\def\cprime{$'$}
\providecommand{\bysame}{\leavevmode\hbox to3em{\hrulefill}\thinspace}
\providecommand{\MR}{\relax\ifhmode\unskip\space\fi MR }
\providecommand{\MRhref}[2]{%
  \href{http://www.ams.org/mathscinet-getitem?mr=#1}{#2}
}
\providecommand{\href}[2]{#2}


\end{document}